\DeclareRobustCommand\widecheck[1]{{\mathpalette\@widecheck{#1}}}
\def\@widecheck#1#2{%
    \setbox\z@\hbox{\m@th$#1#2$}%
    \setbox\tw@\hbox{\m@th$#1%
       \widehat{%
          \vrule\@width\z@\@height\ht\z@
          \vrule\@height\z@\@width\wd\z@}$}%
    \dp\tw@-\ht\z@
    \@tempdima\ht\z@ \advance\@tempdima2\ht\tw@ \divide\@tempdima\thr@@
    \setbox\tw@\hbox{%
       \raise\@tempdima\hbox{\scalebox{1}[-1]{\lower\@tempdima\box
\tw@}}}%
    {\ooalign{\box\tw@ \cr \box\z@}}}
\numberwithin{equation}{section}
\newtheorem{theorem}{Theorem}[section]
\newtheorem{proposition}[theorem]{Proposition}
\newtheorem{lemma}[theorem]{Lemma}
\newtheorem{corollary}[theorem]{Corollary}
\newtheorem{theorem*}{Theorem}
\theoremstyle{definition}
\newtheorem{definition}[theorem]{Definition}
\newtheorem{example}[theorem]{Example}
\theoremstyle{remark}
\newtheorem{remark}[theorem]{Remark}
\newcommand{\Hom}{\operatorname{Hom}}
\newcommand{\Ext}{\operatorname{Ext}}
\newcommand{\id}{\operatorname{id}}
\newcommand{\res}{\operatorname{res.dim}}
\newcommand{\cores}{\operatorname{cores.dim}}
\newcommand{\ra}{\rightarrow}
\def\pd{\mathop{\rm pd}\nolimits}
\def\id{\mathop{\rm id}\nolimits}
\def\dim{\mathop{\rm res.dim}\nolimits}
\newcommand{\s}{\mathfrak{s}}
\newcommand{\E}{\mathbb{E}}
\newcommand{\Id}{\operatorname{Id}}
\newcommand{\RNum}[1]{\uppercase\expandafter{\romannumeral #1\relax}}
\title{\Large \bf One-sided Frobenius pairs  in extriangulated categories
\thanks{2020 Mathematics Subject Classification: 18G20, 18G25}
\thanks{Keywords: Extriangulated categories, a proper class of $\mathbb{E}$-triangles, left Frobenius pairs, left $n$-cotorsion pairs, left (weak) Auslander-Buchweitz contexts}}
\author{Lingling Tan$^a$, Yuqiong Gao$^a$, Qinghua Chen$^b$\thanks{E-mail address: tanll@qfnu.edu.cn, gaoyqmath@163.com, chenqh@fzu.edu.cn}
\\
{\footnotesize a. School of Mathematical Sciences, Qufu Normal University, Qufu 273165, China,}\\
\footnotesize b. School of Mathematics and Statistics, Fu Zhou University, Fu Zhou, Fujian
350108,  China
}
\date{ }
\begin{document}

\baselineskip=16pt
\maketitle

\begin{abstract}
Let $\mathscr{C}$ be an extriangulated category with a proper class $\xi$ of $\mathbb{E}$-triangles.
We introduce the notions of left Frobenius pairs, left ($n$-)cotorsion pairs and left
(weak) Auslander-Buchweitz contexts with respect to $\xi$ in $\mathscr{C}$. We show how to construct left cotorsion pais from left $n$-cotorsion pairs, and establish a one-to-one correspondence between left Frobenius pairs and left (weak) Auslander-Buchweitz contexts. We also study the relation between a certain class of cotorsion pairs and that of $n$-cotorsion pairs.   These work generalize Ma-Zhao-Huang's results in triangulated categories and partially generalize Becerril-Mendoza-P\'{e}rez-Santiago's results in abelian categories.
\end{abstract}

\section{Introduction}
Auslander-Buchweitz approximation theory was firstly developed by Auslander and Buchweitz  \cite{AB89T}.
Later on, Hashimoto \cite{H00A} investigated the Auslander-Buchweitz context for abelian categories.
Cotorsion pairs  were introduced by  Salce \cite{S} in the category of
abelian groups, and developed in the study of
the algebraic and geometric structures of abelian categories (see \cite{ET01H, EJ01R,GT06A}). This notion provides a good setting for
investigating relative homological dimensions (see \cite{AM}). As a higher version of cotorsion pairs,
Huerta et al. \cite{HMP} introduced the notion of $n$-cotorsion pairs in abelian categories.
They described several properties of $n$-cotorsion pairs and
established a relation with (complete) cotorsion pairs. As a remarkable study,  another
higher version of cotorsion pair in abelian categories was previously
introduced by Crivei and Torrecillas  \cite{CT}. On the other hand,
Becerril et al. \cite{BMP19F} introduced Frobenius pairs in abelian categories.
They presented one-to-one correspondences between left Frobenius pairs, Auslander-Buchweitz contexts and
cotorsion pairs in abelian categories. Recently, Ma et al. \cite{MZ} generalized partially this correspondence to triangulated categories with a proper class of triangles.

In \cite{NP}, Nakaoka and Palu introduced the notion of extriangulated categories
 as a simultaneous generalization of exact categories and  triangulated
categories. Up to now,  many results on exact categories and triangulated
categories have gotten realization in the setting of extriangulated categories, e.g. \cite{HZZ,LN,NOS,NP,ZZ,ZZh}, etc. In particular, Ma et al. \cite{MDZ} investigated Auslander-Buchweitz approximation theory in extriangulated categories. In this paper, we will pay close attention to the notion of a proper class $\xi$ of $\mathbb{E}$-triangles in an extriangulated category $(\mathscr{C},\mathbb{E},\mathfrak{s})$, which is introduced by Hu et al. \cite{HZZ} for developing the Gorenstein homological algebra over extriangulated categories.  Throughout this paper, we always assume that $\mathscr{C}=(\mathscr{C},\mathbb{E},\mathfrak{s})$ is an extriangulated category and $\xi$ is a proper class of $\mathbb{E}$-triangles. We also assume that $\mathscr{C}$ has
 enough $\xi$-projective and $\xi$-injective objects.
We are devoted to introducing the notions of  left Frobenius pairs, left ($n$-)cotorsion pairs and left
(weak) Auslander-Buchweitz contexts with respect to  a proper class $\xi$ of $\mathbb{E}$-triangles in an extriangulated category $\mathscr{C}$, and  developing relative homological theory along with the
Auslander-Buchweitz approximation theory. In particular, we will discuss the internal relations among these notions.
Moreover, some applications are given in the context of Gorenstein homological algebra
in extriangulated categories. When $\mathscr{C}$ is a triangulated category with a proper class of triangles, it recovers the results of Ma et al.  \cite{MZ}, and when $\mathscr{C}$ is an abelian category, it recovers  some of results
 of  Becerril et al. \cite{BMP19F} and Huerta et al. \cite{HMP}. Recently, Adachi and Tsukamoto \cite{AT} investigated the relation between silting subcategories and
 left Frobenius pairs over extriangulated categories.
This paper is organized as follows.

In Section $2$, we give some terminology and some preliminary results.

In Section $3$, we recall the notion of left (resp. right) $n$-cotorsion pairs
with respect to $\xi$ in $\mathscr{C}$, and then by virtue of an equivalent characterization of $n$-cotorsion pairs in
\cite{ZPY}, we establish a relation between $n$-cotorsion pairs and cotorsion pairs
(Proposition \ref{prop-6}).

In Section $4$, we introduce the notions of left Frobenius pairs and left (weak) Auslander-Buchweitz
contexts with respect to $\xi$ in  $\mathscr{C}$. For a subcategory $\mathcal{X}$ of $\mathscr{C}$,
$\mathcal{X}^{\wedge}$ denotes the subcategory of $\mathscr{C}$ consisting of objects with finite
$\mathcal{X}$-resolution dimension.
Let $(\mathcal{X},\omega)$ be a left Frobenius pair  in $\mathscr{C}$. We show that
$\mathcal{X}^{\wedge}$ is closed under $\xi$-extensions, cocones of $\xi$-deflations, cones
of $\xi$-inflations and direct summands (Theorem \ref{thm-4.9}).
Then we show how to obtain (left) cotorsion pairs from left Frobenius pairs (Theorem \ref{thm-ftoc}).
Finally, we introduce the notion of left (weak) Auslander-Buchweitz contexts.

 Let $\mathfrak{A}$ (resp. $\mathfrak{A}'$) be the class of all left Frobenius pairs $(\mathcal{X},\omega)$ with respect to $\xi$ (resp. with $\mathcal{X}^\wedge=\mathscr{C}$) in $\mathscr{C}$, and
 $\mathfrak{B} $  (resp. $\mathfrak{B}'$) be the class of all left weak (resp. left) Auslander-Buchweitz context $(\mathcal{A},\mathcal{B})$ with respect to $\xi$ in  $\mathscr{C}$.
Then we establish  a one-to-one correspondence  (Theorems \ref{main} and \ref{cor}) between $\mathfrak{A}$ and $\mathfrak{B}$ (resp. $\mathfrak{A}'$ and $\mathfrak{B}'$)
given by
\begin{align*}
\Phi:&~\mathfrak{A} ~ ({\rm resp. }~ \mathfrak{A}')~\longrightarrow~\mathfrak{B}  ~ ({\rm resp. } ~\mathfrak{B}')\ via\ (\mathcal{X},\omega)\mapsto (\mathcal{X},\ {\omega}^{\wedge})\\
\Psi:&~\mathfrak{B} ~ ({\rm resp. } ~\mathfrak{B}')\longrightarrow\mathfrak{A} ~ ({\rm resp. }~ \mathfrak{A}')~\ via\ (\mathcal{A},\mathcal{B})\mapsto (\mathcal{A},\ \mathcal{A}\cap\mathcal{B}).
\end{align*}
Let $\mathfrak{C}'$ be the class of all cotorsion pairs $(\mathcal{U},\mathcal{V})$ with respect to $\xi$ in
$\mathscr{C}$  with $\mathcal{U}$ resolving and $\mathcal{U}^{\wedge}=\mathscr{C}$, and $\mathfrak{D}'$ be the class of all $n$-cotorsion pairs $ (\mathcal{U},\mathcal{V})$ with respect to $\xi$ in $\mathscr{C}$  with $\mathcal{U}$  resolving and $\mathcal{U}^{\wedge}=\mathscr{C}$.
We also show that $\mathfrak{B}'=\mathfrak{C}'=\mathfrak{D}'$ (Theorem \ref{cor}).

\section{Preliminaries}\label{P}

We first recall some notions and some needed properties of
extriangulated categories from \cite{NP}.

Let $\mathscr{C}$ be an additive category and $\E:\mathscr{C}^{\operatorname{op}}\times\mathscr{C}\rightarrow \mathfrak{A}b$
 a biadditive functor, where $\mathfrak{A}b$ is the category of abelian groups.
Let $A,C\in\mathscr{C}$. An element $\delta\in \mathbb{E}(C,A)$ is called an \emph{$\mathbb{E}$-extension}.
Two sequences of morphisms
$$
\xymatrix@C=0.5cm{
  A \ar[r]^x & B \ar[r]^y & C }
  \mbox{ and }
  \xymatrix@C=0.5cm{
  A \ar[r]^{x'} & B' \ar[r]^{y'} & C}
$$
are said to be \emph{equivalent} if there exists an isomorphism $b\in\Hom_{\mathscr{C}}(B,B')$ such that $x'=bx$ and $y=y'b$.
We denote by $[\xymatrix@C=0.5cm{
A \ar[r]^x & B \ar[r]^y & C }]$ the equivalence class of $\xymatrix@C=0.5cm{
A \ar[r]^x & B \ar[r]^y & C}$. In particular, we write $0:=[\xymatrix@C=0.7cm{
A \ar[r]^{\!\!\!\!\!\!\!{\Id_A\choose 0}} & A\oplus C \ar[r]^{~~~(0 \ \Id_C)} & C }]$.

For an $\E$-extension $\delta\in \mathbb{E}(C,A)$, we briefly write
$$a_*\delta:=\E(C,a)(\delta)\ {\rm and}\ c^*\delta:=\E(c,A)(\delta).$$
For two $\E$-extensions $\delta\in \mathbb{E}(C,A)$ and $\delta'\in \mathbb{E}(C',A')$,
a \emph{morphism} from $\delta$ to $\delta'$ is a pair $(a,c)$ of morphisms with
$a\in \Hom_{\mathscr{C}}(A,A')$ and $c\in \Hom_{\mathscr{C}}(C,C')$ such that $a_*\delta=c^*\delta'$.

\begin{definition} (\cite[Definition 2.9]{NP})
Let $\s$ be a correspondence which associates an equivalence class $\s(\delta)=[\xymatrix@C=0.5cm{
A \ar[r]^x & B \ar[r]^y & C }]$ to each $\E$-extension $\delta\in\E(C,A)$. Such $\s$ is called a
\emph{realization} of $\E$ provided that it satisfies the following condition.
\begin{enumerate}
\item[(R)]  Let $\delta\in\E(C,A)$ and $\delta'\in\E(C',A')$ be any pair of $\E$-extensions with
$$\s(\delta)=[\xymatrix@C=0.5cm{
A \ar[r]^x & B \ar[r]^y & C }]
\mbox{ and }
\s(\delta')=[ \xymatrix@C=0.5cm{
A' \ar[r]^{x'} & B' \ar[r]^{y'} & C' }].$$
Then for any morphism $(a,c):\delta\rightarrow\delta'$, there exists $b\in\Hom_{\mathscr{C}}(B,B')$ such that the following diagram
$$\xymatrix@=0.5cm{
A \ar[r]^x\ar[d]^a & B \ar[r]^y\ar[d]^b & C\ar[d]^c\\
A' \ar[r]^{x'} & B' \ar[r]^{y'} & C' }$$
commutes.
\end{enumerate}
Let $\s$ be a realization of $\E$. If $\s(\delta)=[\xymatrix@C=0.5cm{
A \ar[r]^x & B \ar[r]^y & C }]$ for some $\E$-extension $\delta\in\E(C,A)$, then we say that the sequence
$\xymatrix@C=0.5cm{A \ar[r]^x & B \ar[r]^y & C }$ \emph{realizes} $\delta$; and in the condition (R),
we say that the triple $(a,b,c)$ \emph{realizes} the morphism $(a,c)$.
\end{definition}

For any two equivalence classes $[\xymatrix@C=0.5cm{
  A \ar[r]^x & B \ar[r]^y & C }]$ and $[\xymatrix@C=0.5cm{
  A' \ar[r]^{x'} & B' \ar[r]^{y'} & C' }]$, we define
$$[\xymatrix@C=0.5cm{
  A \ar[r]^x & B \ar[r]^y & C }]\oplus[\xymatrix@C=0.5cm{
  A' \ar[r]^{x'} & B' \ar[r]^{y'} & C' }]:=[\xymatrix@C=0.5cm{
  A\oplus A' \ar[r]^{x\oplus x'} & B\oplus B' \ar[r]^{y\oplus y'} & C\oplus C' }].$$

\begin{definition} (\cite[Definition 2.10]{NP})
A realization $\s$ of $\E$ is called \emph{additive} if it satisfies the following conditions.
\begin{enumerate}
\item[(1)] For any $A,C\in\mathscr{C}$, the split $\E$-extension $0\in\E(C,A)$ satisfies $\s(0)=0$.
\item[(2)] For any pair of $\E$-extensions $\delta\in\E(C,A)$ and $\delta'\in\E(C',A')$,
we have $\s(\delta\oplus\delta')=\s(\delta)\oplus\s(\delta')$.
\end{enumerate}
\end{definition}

\begin{definition}{ {(\cite[Definitions 2.15 and 2.19]{NP})}} {

Let $\mathscr{C}$ be an additive category,~$\mathfrak{s}$ be an additive realization.
\begin{enumerate}
\item A sequence $\xymatrix@C=0.5cm{A\ar[r]^x&B\ar[r]^{y}&C}$ is called a {\it conflation} if it realizes some $\mathbb{E}$-extension $\delta\in\mathbb{E}(C, A)$.
In this case, $x$ is called an {\it inflation} and $y$ is called a {\it deflation}.

\item  If a conflation $\xymatrix@C=0.5cm{A\ar[r]^x&B\ar[r]^{y}&C}$ realizes $\delta\in\mathbb{E}(C, A)$, we call the pair
$\xymatrix@C=0.5cm{(A\ar[r]^x&B\ar[r]^{y}&C, \delta)}$ an {\it $\mathbb{E}$-triangle}, and write it in the following way.
\begin{center} $\xymatrix@C=0.5cm{A\ar[r]^x&B\ar[r]^{y}&C\ar@{-->}[r]^{\delta}&}$\end{center}
We usually do not write this ``$\delta$" if it is not used in the argument.

\item Let $\xymatrix@C=0.5cm{A\ar[r]^x&B\ar[r]^{y}&C\ar@{-->}[r]^{\delta}&}$ and $\xymatrix@C=0.5cm{A'\ar[r]^{x'}&B'\ar[r]^{y'}&C'\ar@{-->}[r]^{\delta'}&}$
be any pair of $\mathbb{E}$-triangles. If a triplet $(a, b, c)$ realizes $(a, c): \delta\rightarrow \delta'$, then we write it as
 $$\xymatrix@C=0.5cm@R=0.4cm{A\ar[r]^{x}\ar[d]_{a}&B\ar[r]^{y}\ar[d]_{b}&C\ar[d]_{c}\ar@{-->}[r]^{\delta}&\\
 A'\ar[r]^{x'}&B'\ar[r]^{y'}&C'\ar@{-->}[r]^{\delta'}&}$$
 and call $(a, b, c)$ a {\it morphism} of $\mathbb{E}$-triangles.

If $a, b, c$ above are isomorphisms, then $\xymatrix@C=0.4cm{A\ar[r]^x&B\ar[r]^{y}&C\ar@{-->}[r]^{\delta}&}$ and $\xymatrix@C=0.4cm{A'\ar[r]^{x'}&B'\ar[r]^{y'}&C'\ar@{-->}[r]^{\delta'}&}$ are said to be {\it isomorphic}.
\end{enumerate}}

\end{definition}
\begin{definition} (\cite[Definition 2.12]{NP})
The triple $(\mathscr{C},\E,\s)$ is called an \emph{externally triangulated}
(or \emph{extriangulated} for short) category if it satisfies the following conditions.
 \begin{enumerate}
\item[(ET1)] $\E:\mathscr{C}^{\operatorname{op}}\times\mathscr{C}\rightarrow \mathfrak{A}b$
is a biadditive functor.
\item[(ET2)] $\s$ is an additive realization of $\E$.
\item[(ET3)] Let $\xymatrix@C=0.5cm{
A\ar[r]^{x} &B \ar[r]^{y} &C \ar@{-->}[r]^{\delta}&}$ and$\xymatrix@C=0.5cm{
A'\ar[r]^{x} &B' \ar[r]^{y} &C'\ar@{-->}[r]^{\delta'}&}$ be any pair of $\E$-triangles.
For any  diagram with $bx=x'a$
$$
\xymatrix@=0.5cm{
   A \ar[r]^x\ar[d]^a & B \ar[r]^y\ar[d]^b & C\ar@{.>}[d]^c \ar@{-->}[r]^{\delta}&\\
    A' \ar[r]^{x'} & B' \ar[r]^{y'} & C' \ar@{-->}[r]^{\delta'}& }
$$
in $\mathscr{C}$, there exists a morphism $(a,c):\delta\ra \delta'$ which is realized by the triple $(a,b,c)$, that is, $(a,b,c)$ is a morphism of $\E$-triangles.
\item[${\rm (ET3)^{{op}}}$]   Dual of (ET3).
\item[(ET4)]   Let $\xymatrix@C=0.5cm{
A\ar[r]^{x} &B \ar[r]^{y} &C \ar@{-->}[r]^{\delta}&}$ and$\xymatrix@C=0.5cm{
B\ar[r]^{u} &D \ar[r]^{v} &F\ar@{-->}[r]^{\rho}&}$ be any pair of $\E$-triangles.
Then there exist an object $E\in\mathscr{C}$, an $\E$-triangle $\xymatrix@C=0.5cm{
  A \ar[r]^z & D \ar[r]^w & E \ar@{-->}[r]^{\xi} & }$, and a commutative diagram
$$
\xymatrix@=0.5cm{
   A \ar[r]^x\ar@{=}[d] & B \ar[r]^y\ar[d]^u & C\ar@{.>}[d]^s\ar@{-->}[r]^{\delta}&\\
    A \ar@{.>}[r]^{z} & D \ar@{.>}[r]^{w}\ar[d]^v &E\ar@{.>}[d]^t\ar@{-->}[r]^\xi&\\
    &F\ar@{-->}[d]^{\rho}\ar@{=}[r] &F\ar@{-->}[d]\\&&& }
$$
in $\mathscr{C}$, which satisfy the following compatibilities.
 \begin{enumerate}
\item[(i)] $\xymatrix@C=0.5cm{
  C \ar[r]^s & E \ar[r]^t & F \ar@{-->}[r]^{y_*\rho}&}$ is an $\E$-triangle.
\item[(ii)] $s^*\xi=\delta$.
\item[(iii)] $x_*\xi=t^*\rho$.
\end{enumerate}
\item[${\rm (ET4)^{op}}$]    Dual of (ET4).
\end{enumerate}
\end{definition}

\begin{remark}
Note that both exact categories and triangulated categories are extriangulated categories $($see \cite[Proposition 3.22]{NP}$)$ and extension closed subcategories of extriangulated categories are
again extriangulated $($see \cite[Remark 2.18]{NP}$)$. Moreover, there exist extriangulated categories which
are neither exact categories nor triangulated categories $($see \cite[Proposition 3.30]{NP} and \cite[Remark 3.3]{HZZ}$)$.
\end{remark}

The following condition is analogous to the weak idempotent completeness in exact categories (see \cite[Condition 5.8]{NP}).

 {\bf Condition (WIC)}  Consider the following conditions.

\begin{enumerate}
\item[(a)]  Let $f\in\mathscr{C}(A, B), g\in\mathscr{C}(B, C)$ be any composable pair of morphisms. If $gf$ is an inflation, then so is $f$.

\item[(b)] Let $f\in\mathscr{C}(A, B), g\in\mathscr{C}(B, C)$ be any composable pair of morphisms. If $gf$ is a deflation, then so is $g$.

\end{enumerate}

\begin{example}\label{Ex:4.12}

\emph{(1)} If $\mathscr{C}$ is an exact category, then Condition {(WIC)} is equivalent to that $\mathscr{C}$ is
weakly idempotent complete {(see \cite[Proposition 7.6]{B"u})}.

In detail, $\mathscr{C}$ is weakly idempotent complete if and only if given two morphisms $g:B\rightarrow C$ and $f:A\rightarrow B$, if $gf:A\rightarrow C$ is a deflation, then $g$ is a deflation.

\emph{(2)} If $\mathscr{C}$ is a triangulated category, then Condition {(WIC)} is automatically satisfied.
\end{example}

\begin{lemma}\label{lem1} {\rm (\cite[Proposition 3.15]{NP})} Assume that $(\mathscr{C}, \mathbb{E},\mathfrak{s})$ is an extriangulated category.
\begin{enumerate}
\item[(1)]
Let $C$ be an object in $\mathscr{C}$, and let $\xymatrix@C=0.5cm{A_1\ar[r]^{x_1}&B_1\ar[r]^{y_1}&C\ar@{-->}[r]^{\delta_1}&}$ and $\xymatrix@C=0.5cm{A_2\ar[r]^{x_2}&B_2\ar[r]^{y_2}&C\ar@{-->}[r]^{\delta_2}&}$ be any pair of $\mathbb{E}$-triangles. Then there is a commutative diagram
in $\mathscr{C}$
$$\xymatrix@=0.5cm{
    & A_2\ar@{.>}[d]_{m_2} \ar@{=}[r] & A_2 \ar[d]^{x_2} \\
  A_1 \ar@{=}[d] \ar@{.>}[r]^{m_1} & M \ar@{.>}[d]_{e_2} \ar@{.>}[r]^{e_1} & B_2\ar[d]^{y_2}\ar@{-->}[r]& \\
  A_1 \ar[r]^{x_1} & B_1\ar[r]^{y_1}\ar@{-->}[d] & C\ar@{-->}[r]^{\delta_{1}}\ar@{-->}[d]^{\delta_{2}}&\\&&&  }
  $$
  which satisfies $\mathfrak{s}(y^*_2\delta_1)=\xymatrix@C=0.5cm{[A_1\ar[r]^{m_1}&M\ar[r]^{e_1}&B_2]}$ and
  $\mathfrak{s}(y^*_1\delta_2)=\xymatrix@C=0.5cm{[A_2\ar[r]^{m_2}&M\ar[r]^{e_2}&B_1].}$ 
\item[(2)] Dual to (1).
\end{enumerate}
\end{lemma}

The following definitions are quoted verbatim from \cite[Section 3]{HZZ}.
\begin{definition}
A class of $\mathbb{E}$-triangles $\xi$ is {\it closed under base change} if for any $\mathbb{E}$-triangle $$\xymatrix@C=0.5cm{A\ar[r]^x&B\ar[r]^y&C\ar@{-->}[r]^{\delta}&\in\xi}$$ and any morphism $c\colon C' \to C$, then any $\mathbb{E}$-triangle  $\xymatrix@C=0.5cm{A\ar[r]^{x'}&B'\ar[r]^{y'}&C'\ar@{-->}[r]^{c^*\delta}&}$ belongs to $\xi$.

Dually, a class of  $\mathbb{E}$-triangles $\xi$ is {\it closed under cobase change} if for any $\mathbb{E}$-triangle $$\xymatrix@C=0.5cm{A\ar[r]^x&B\ar[r]^y&C\ar@{-->}[r]^{\delta}&\in\xi}$$ and any morphism $a\colon A \to A'$, then any $\mathbb{E}$-triangle  $\xymatrix@C=0.5cm{A'\ar[r]^{x'}&B'\ar[r]^{y'}&C\ar@{-->}[r]^{a_*\delta}&}$ belongs to $\xi$.

A class of $\mathbb{E}$-triangles $\xi$ is called {\it saturated} if in the situation of Lemma \ref{lem1}(1), whenever
$\xymatrix@C=0.5cm{A_2\ar[r]^{x_2}&B_2\ar[r]^{y_2}&C\ar@{-->}[r]^{\delta_2 }&}$
 and $\xymatrix@C=0.5cm{A_1\ar[r]^{m_1}&M\ar[r]^{e_1}&B_2\ar@{-->}[r]^{y_2^{\ast}\delta_1}&}$
 belong to $\xi$, then the  $\mathbb{E}$-triangle $$\xymatrix@C=0.5cm{A_1\ar[r]^{x_1}&B_1\ar[r]^{y_1}&C\ar@{-->}[r]^{\delta_1 }&}$$  belongs to $\xi$.

An $\mathbb{E}$-triangle $\xymatrix@C=0.5cm{A\ar[r]^x&B\ar[r]^y&C\ar@{-->}[r]^{\delta}&}$ is called {\it split} if $\delta=0$. It is easy to see that it is split if and only if $x$ is section or $y$ is retraction, that is, there exists $r\in\mathscr{C}(B,A), s\in\mathscr{C}(C,B)$, which satisfies ${\footnotesize \begin{bmatrix} r\\y\end{bmatrix}}: \xymatrix@C=0.5cm{B\ar[r]^-{\cong}&A\bigoplus C}$.
\end{definition}

The full subcategory  consisting of the split $\mathbb{E}$-triangles will be denoted by $\Delta_0$.

  \begin{definition} {(cf. \cite[Definition 3.1]{HZZ})}\label{def:proper class} {\rm  Let $\xi$ be a class of $\mathbb{E}$-triangles which is closed under isomorphisms.  If the following conditions hold:
  \begin{enumerate}
\item[(a)]  $\xi$ is closed under finite coproducts and $\Delta_0\subseteq \xi$,

\item[(b)] $\xi$ is closed under base change and cobase change,

\item[(c)] $\xi$ is saturated,
  \end{enumerate}
  then we call $\xi$  a {\it proper class} of $\mathbb{E}$-triangles.}
  \end{definition}

 \begin{definition} {(\cite[Definition 4.1]{HZZ})}
 {\rm An object $P\in\mathscr{C}$  is called {\it $\xi$-projective}  if for any $\mathbb{E}$-triangle $\xymatrix@=0.5cm{
 A\ar[r]^x& B\ar[r]^y& C \ar@{-->}[r]^{\delta}&
 }$ in $\xi$, the induced sequence of abelian groups $$\xymatrix@C=0.5cm{0\ar[r]& \Hom_\mathscr{C}(P,A)\ar[r]& \Hom_\mathscr{C}(P,B)\ar[r]&\Hom_\mathscr{C}(P,C)\ar[r]& 0}$$ is exact. Dually, we have the definition of {\it $\xi$-injective} objects.}
\end{definition}

We denote by $\mathcal{P}(\xi)$ (resp. $\mathcal{I}(\xi)$) the full subcategory of $\mathscr{C}$ consisting of  $\xi$-projective (resp., $\xi$-injective) objects. It follows from the definition that  $\mathcal{P}(\xi)$ and $\mathcal{I}(\xi)$ are full, additive, closed under isomorphisms and direct summands.

 An extriangulated  category $(\mathscr{C}, \mathbb{E}, \mathfrak{s})$ is said to  have {\it  enough
$\xi$-projectives} \ (resp., {\it  enough $\xi$-injectives}) provided that for each object $A$ there exists an $\mathbb{E}$-triangle $\xymatrix@C=0.5cm{K\ar[r]& P\ar[r]&A\ar@{-->}[r]& }$ (resp., $\xymatrix@C=0.5cm{A\ar[r]& I\ar[r]& K\ar@{-->}[r]&}$) in $\xi$ with $P\in\mathcal{P}(\xi)$ (resp., $I\in\mathcal{I}(\xi)$).

The {\it $\xi$-projective dimension} $\xi$-${\rm pd} A$ of $A\in\mathscr{C}$ is defined inductively.
 If $A\in\mathcal{P}(\xi)$, then define $\xi$-${\rm pd} A=0$. For a positive integer $n$, one writes $\xi$-${\rm pd} A= n$ provided:
\begin{enumerate}
\item[(a)] there is an $\mathbb{E}$-triangle $\xymatrix@C=0.5cm{K\ar[r]& P\ar[r]&A\ar@{-->}[r]& }$ with $P\in\mathcal{P}(\xi)$ and $\xi$-${\rm pd} K= n-1$,
\item[(b)] there does not exist an $\mathbb{E}$-triangle $\xymatrix@C=0.5cm{L\ar[r]& P'\ar[r]&A\ar@{-->}[r]& }$ with $P'\in\mathcal{P}(\xi)$ and $\xi$-${\rm pd} L< n-1$.
\end{enumerate}
 We set $\xi$-${\rm pd} A=\infty$, if $\xi$-${\rm pd} A\neq n$ for all $n\geq 0$.

Dually, we can define the {\it $\xi$-injective dimension}  $\xi$-${\rm id} A$ of an object $A\in\mathscr{C}$.

\begin{definition} {(\cite[Definition 4.4]{HZZ})}
 A {\it $\xi$-exact} complex $\mathbf{X}$ is a diagram $$\xymatrix@C=0.5cm{
 \cdots\ar[r]&X_1\ar[r]^-{d_1}&X_0\ar[r]^-{d_0}&X_{-1}\ar[r]&\cdots}$$ in $\mathscr{C}$ such that for each integer $n$, we have $d_n=g_{n-1}f_n$ for some   $\mathbb{E}$-triangle $$\xymatrix@C=0.5cm{
 K_{n+1}\ar[r]^-{g_n}&X_n\ar[r]^{f_n}&K_n\ar@{-->}[r]&}$$ in $\xi$.

In particular, by saying that $$\xymatrix@C=0.5cm{
X_{n}\ar[r]^-{d_n}&X_{n-1}\ar[r]&\cdots\ar[r]&X_{1}
\ar[r]^{d_1}&X_{0}}$$ is $\xi$-exact, it means that there are $\mathbb{E}$-triangles
$$\xymatrix@C=0.5cm{
X_n\ar[r]^-{d_n}&X_{n-1}\ar[r]^{f_{n-1}}&K_{n-1}\ar@{-->}[r]&} \mbox{ and } \xymatrix@C=0.5cm{
K_2\ar[r]^{g_1}&X_{1}\ar[r]^{d_{1}}&X_0\ar@{-->}[r]&}$$  in $\xi$, and  for each integer $1<i<n-1$, we have $d_i=g_{i-1}f_i$ for some   $\mathbb{E}$-triangle $$\xymatrix@C=0.5cm{
K_{i+1}\ar[r]^-{g_{i}}&X_i\ar[r]^{f_i}&K_i\ar@{-->}[r]&}$$ in $\xi$.
\end{definition}

\begin{definition} {(\cite[Definition 3.1]{HZZ1})}\label{df:resolution}  Let $M$ be an object in $\mathscr{C}$. By a  {\it $\xi$-projective resolution} of $M$ we mean a symbol of the form $\mathbf{P}\rightarrow M$ where $\mathbf{P}$ is a  $\xi$-exact complex,  $P_n\in{\mathcal{P}(\xi)}$ for all $n\geq0$, and  $P_{-1}=M$ and $P_n=0$ for all $n<-1$.

The notion of {\it $\xi$-injective coresolution} of $M$ is given dually.
\end{definition}

\begin{definition} {(\cite[Definition 3.2]{HZZ1})}\label{df:derived-functors} { Let $M$ and $N$ be objects in $\mathscr{C}$.

\begin{enumerate}
\item[{\rm (1)}] If we choose a $\xi$-projective resolution $\xymatrix@C=0.5cm{\mathbf{P}\ar[r]& M}$ of  $M$, by applying the functor $\Hom_\mathscr{C}(-,N)$ to $\mathbf{P}$ we have a complex of abelian groups ${\Hom_\mathscr{C}}(\mathbf{P},N)$.  For any integer $n\geq 0$, the \emph{$\xi$-cohomology groups} $\xi{\rm xt}_{\mathcal{P}(\xi)}^n(M,N)$ are defined as
$$\xi{\rm xt}_{\mathcal{P}(\xi)}^n(M,N)=H^n({\Hom_\mathscr{C}}(\mathbf{P},N)).$$

\item[{\rm (2)}] If we choose a
$\xi$-injective coresolution $\xymatrix@C=0.5cm{N\ar[r]&\mathbf{I}}$ of  $N$, by applying the functor $\Hom_\mathscr{C}(M,-)$ to $\mathbf{I}$ we have a complex of abelian groups ${\Hom_\mathscr{C}}(M,\mathbf{I})$.  For any integer $n\geq 0$, the \emph{$\xi$-cohomology groups} $\xi{\rm xt}_{\mathcal{I}(\xi)}^n(M,N)$ are defined as $$\xi{\rm xt}_{\mathcal{I}(\xi)}^n(M,N)=H^n({\Hom_\mathscr{C}}(M, \mathbf{I})).$$
\end{enumerate}}
\end{definition}

\begin{remark}\label{fact:2.5'} {
(1) In fact, there is  an isomorphism $\xi{\rm xt}_{\mathcal{P}(\xi)}^n(M,N)\cong \xi{\rm xt}_{\mathcal{I}(\xi)}^n(M,N),$
which is denoted by $\xi{\rm xt}_{\xi}^n(M,N)$ (see \cite[Definition 3.2]{HZZ1}).

(2)  Assume that $\mathscr{C}$ has enough $\xi$-projective objects. Using a standard argument in homological algebra, there is a bijection
$$
\xi{\rm xt}_\xi^1(M,N)\to \{[\xymatrix@C=0.4cm{N\ar[r]^{x}&Z\ar[r]^{y}& M}]\mid \xymatrix@C=0.4cm{N\ar[r]^{x}&Z\ar[r]^{y}& M\ar@{-->}[r]^{\delta}&}\in\xi\}.
$$
}
\end{remark}

\begin{remark} \label{remark-long} (\cite[Lemma 3.4]{HZZ1})
Let $\xymatrix@C=0.5cm{X\ar[r]&Y\ar[r]&Z\ar@{-->}[r]&}$
be an $\mathbb{E}$-triangle
in $\xi$.

 (1) If $\mathscr{C}$ has enough $\xi$-projective objects and $M$ is an object in $\mathscr{C}$,
then there exists a long exact sequence
\begin{center}
\xymatrix@C=0.5cm{0\ar[r]&\xi{\rm xt}_{{\xi}}^{0}(Z,M)\ar[r]&\xi {\rm xt}_{{\xi}}^{0}(Y,M)\ar[r]&\xi{\rm xt}_{{\xi}}^{0}(X,M)\ar[r]&}~~~~~~~~
\xymatrix@C=0.5cm{\xi{\rm xt}_{{\xi}}^{1}(Z,M)\ar[r]&\xi{\rm xt}_{{\xi}}^{1}(Y,M)\ar[r]&\xi{\rm xt}_{{\xi}}^{1}(X,M)
\ar[r]&\cdots}
\end{center}
 of abelian groups. Moreover, since $\xi$ is closed under cobase change, one has a long exact sequence
 \begin{center}
\xymatrix@C=0.5cm{{\rm Hom}_{\mathscr{C}}(Z,M)\ar[r]&{\rm Hom}_{\mathscr{C}}(Y,M)\ar[r]&{\rm Hom}_{\mathscr{C}}(X,M)\ar[r]&}~~~~~~~~
\xymatrix@C=0.5cm{\xi{\rm xt}_{{\xi}}^{1}(Z,M)\ar[r]&\xi{\rm xt}_{{\xi}}^{1}(Y,M)\ar[r]&\xi{\rm xt}_{{\xi}}^{1}(X,M)
\ar[r]&\cdots}
\end{center}
 of abelian groups by \cite[Corollary 3.12]{NP}.

 (2) Dual to (1).
\end{remark}

Following Remarks \ref{fact:2.5'} and  \ref{remark-long}, we have the following  results.

\begin{remark}\label{remark-ds}
Let $$\xymatrix@=0.5cm{
K \ar[r] &C_{n-1} \ar[r] &\cdots \ar[r] &C_{1} \ar[r] &C_{0} \ar[r] &A}$$ be a $\xi$-exact complex in the extriangulated category $\mathscr{C}$ such that $C_{i}\in {^{\bot}}Y,~\forall 0\leq i\leq n-1$.~Then $\xi xt^{k}_{\xi}(K,Y)\cong\xi xt^{k+n}_{\xi}(A,Y).$
\end{remark}

Now, we set
\begin{align*}
  \mathcal{X}^{\perp} & =\{M\in \mathscr{C}\mid \xi xt_{\xi}^{n\geq 1}(X,M)=0\text{ for all }X\in \mathcal{X}\}, \\
   ^{\perp}\mathcal{X}& =\{M\in \mathscr{C}\mid \xi xt_{\xi}^{n\geq 1}(M,X)=0\text{ for all }X\in \mathcal{X}\},\\
   \mathcal{X}^{\perp_k}&=\{M\in \mathscr{C}\mid \xi xt_{\xi}^{k}(X,M)=0\text{ for all }X\in \mathcal{X}\},\\
  ^{\perp_k}\mathcal{X} &=\{M\in \mathscr{C}\mid \xi xt_{\xi}^{k}(M,X)=0\text{ for all }X\in \mathcal{X}\}.
\end{align*}

For two subcategories $\mathcal{H}$ and $\mathcal{X}$ of $\mathscr{C}$, we sometimes write $\mathcal{H}\perp \mathcal{X}$ if $\mathcal{H}\subseteq {^{\perp}\mathcal{X}
}$ (equivalently,  $\mathcal{X}\subseteq \mathcal{H}^{\perp}$).

\begin{definition} {\rm (\cite[Definition 3.4]{HZZ})} Let
$\xymatrix@C=0.5cm{X \ar[r]^-{u}&Y\ar[r]^-{v}&Z\ar@{-->}[r]&}$ be an $\mathbb{E}$-triangle in $\xi$.
Then the morphism $u$ (resp. $v$) is called a \emph{{$\xi$-inflation}} (resp. a \emph{{$\xi$-deflation}}).
\end{definition}

Given any $\mathbb{E}$-triangle
$$\xymatrix@C=0.5cm{X \ar[r]& Y \ar[r]& Z \ar@{-->}[r]& }$$
in $\xi$.
We say that
$\mathcal{X}$ is \emph{{closed under $\xi$-extensions}} if, given any such $\mathbb{E}$-triangle in $\xi$ as above, the condition $X,Z\in\mathcal{X}$ implies $Y\in\mathcal{X}$.
We say that
$\mathcal{X}$ is \emph{closed under cocones of $\xi$-deflations} (resp. \emph{cones of $\xi$-inflations}) if, given any such $\mathbb{E}$-triangle in $\xi$ as above, the condition $Y,Z\in\mathcal{X}$ (resp. $X,Y\in\mathcal{X}$) implies $X\in\mathcal{X}$ (resp. $Z\in\mathcal{X}$).

Now we generalize the concept of resolving subcategories from abelian and triangulated categories to extriangulated categories.

\begin{definition}
{\rm Let $\mathscr{C}$ be an extriangulated category with enough $\xi$-projective objects and $\mathcal{X}$ a subcategory of $\mathscr{C}$. Then $\mathcal{X}$ is called a
{\it resolving} subcategory of $\mathscr{C}$ if the following conditions are satisfied.
\begin{enumerate}
\item[$(1)$] $\mathcal{P}(\xi)\subseteq \mathcal{X}$.
\item[$(2)$] $\mathcal{X}$ is closed under $\xi$-extensions.
\item[$(3)$] $\mathcal{X}$ is closed under cocones of $\xi$-deflations.
\end{enumerate}}

The notion of \emph{coresolving} subcategories is defined dually.
\end{definition}

\begin{remark}\label{remark}
\begin{itemize}
\item[(a)] We do not require that a resolving subcategory is closed under direct summands in the above definition.
\item[(b)] $\mathcal{P}(\xi)$ is a resolving subcategory.
\item[(c)] In \cite{HZZ}, Hu et al. introduced the notion of  $\xi$-$\mathcal{G}$projective (resp., $\xi$-$\mathcal{G}$injective) objects, see \cite[Definion 4.8]{HZZ} for the definition.   We denote by $\mathcal{GP}(\xi)$ (resp. $\mathcal{GI}(\xi)$) the class of $\xi$-$\mathcal{G}$projective (resp., $\xi$-$\mathcal{G}$injective) objects. Then $\mathcal{GP}(\xi)$ is a resolving subcategory by \cite[Theorems 4.16 and 4.17]{HZZ}.
\end{itemize}
\end{remark}

\begin{definition}(\cite[Definition 4.1]{{HZZ1}}) A subcategory $\mathcal{X}$ of $\mathscr{C}$
is called  a {\it generating subcategory}
of $\mathscr{C}$ if for all $X\in \mathcal{X}$, the condition
$\Hom_{\mathscr{C}}(X,C)=0$ implies $C=0$. Dually,
a subcategory $\mathcal{Y}$ is called a {\it cogenerating subcategory}
of $\mathscr{C}$ if  for all $Y\in \mathcal{Y}$, the condition
$\Hom_{\mathscr{C}}(C,Y)=0$ implies $C=0$.
\end{definition}

\begin{definition}(cf. \cite[Definition 3.8]{MZResolving})
Let $\mathcal{X}$ be a subcategory of $\mathscr{C}$ and $M$ an object in $\mathscr{C}$.
A $\xi$-deflation $\xymatrix@C=0.5cm{X\ar[r]& M}$ is said to be a
\emph{right $\mathcal{X}$-approximation} of $M$ if ${X}\in \mathcal{X}$ and the induced complex
$$\xymatrix@C=0.5cm{\Hom_{\mathscr{C}}(\widetilde{X},X)\ar[r]
& \Hom_{\mathscr{C}}(\widetilde{X},M)\ar[r]&0}$$ is exact for any
$\widetilde{X}\in \mathcal{X}$. In this case, there is an $\mathbb{E}$-triangle
$\xymatrix@C=0.5cm{K\ar[r]&X\ar[r]&M\ar@{-->}[r]&}$ in $\xi$.
Dually, a \emph{left $\mathcal{X}$-approximation} of $M$ is defined.
\end{definition}

Now we generalize the concepts of contravariantly finite, covariantly finite and functorially finite subcategories from abelian categories to extriangulated categories.

We say that the subcategory $\mathcal{X}$ is \emph{contravariantly finite}
if any object $M\in \mathscr{C}$ admits a right $\mathcal{X}$-approximation.
Dually, we say that $\mathcal{X}$ is {\it covariantly finite} if any object
$M\in \mathscr{C}$ admits a left $\mathcal{X}$-approximation.
The subcategory $\mathcal{X}$ is called \emph{functorially finite}
if it is both contravariantly finite and covariantly finite.

\begin{definition}{\rm (cf. \cite[Definition 2.11]{MZResolving})}
{\rm Let $(\mathcal{X},\omega)$ be a pair of subcategories in $\mathscr{C}$ with $\omega\subseteq \mathcal{X}$.
\begin{itemize}
\item[(1)] $\omega$ is called a {\it $\xi$-cogenerator} of $\mathcal{X}$ if for any  $X\in\mathcal{X}$,
there exists an $\mathbb{E}$-triangle
$$\xymatrix@C=0.5cm{X \ar[r]&W\ar[r]&X'\ar@{-->}[r]&}$$ in $\xi$ with $W\in\omega$ and $X'\in\mathcal{X}$.
\item[(2)] $\omega$ is called  \emph{$\mathcal{X}$-injective} if $\omega\subseteq \mathcal{X}^{\perp}$.
\end{itemize}}
\end{definition}

\begin{definition}
{\rm Let $\mathcal{X}$ be a subcategory of $\mathscr{C}$ and $M\in\mathscr{C}$.
The {\it $\mathcal{X}$-resolution dimension} of $M$ (with respect to $\xi$), written $\mathcal{X}$-$\dim M$, is defined by
\begin{align*}
\mathcal{X}\text{-}\dim M&=\inf \{n \geq 0\mid\text{ there exists a } \xi\text{-exact complex}\\
&\xymatrix@C=0.5cm
{X_{n}\ar[r]&\cdots\ar[r]&X_{1}\ar[r]&X_{0}\ar[r]&M} \text{ in } \mathscr{C} \text{ with all } X_{i} \text{ objects in } \mathcal{X}\}.
\end{align*}
The \emph{$\mathcal{X}$-resolution dimension} of $\mathscr{C}$ is defined by
\begin{align*}
\mathcal{X}\text{-}\dim \mathscr{C}:=&\sup \{\mathcal{X}\text{-}\res M \mid M\in \mathscr{C}\}.
\end{align*}
Dually, the \emph{$\mathcal{X}$-coresolution dimensions} $\mathcal{X}\text{-}\cores M$ and $\mathcal{X}\text{-}\cores \mathscr{C}$ are defined.

For a $\xi$-exact complex
$$\xymatrix@C=0.5cm
{\cdots\ar[r]^{f_{n+1}}&X_{n}\ar[r]&\cdots\ar[r]^{f_{2}}&X_{1}\ar[r]^{f_{1}}&X_{0}\ar[r]^{f_{0}}&M}$$ with all $X_{i}\in \mathcal{X}$, there are
$\mathbb{E}$-triangles $\xymatrix@C=0.5cm{K_1\ar[r]^{g_0}&X_0\ar[r]^{f_0}&M\ar@{-->}[r]&}$ and  $\xymatrix@C=0.5cm{K_{i+1}\ar[r]^{g_i}&X_i\ar[r]^{h_i}&K_i\ar@{-->}[r]&}$ with $f_i=g_{i-1}h_i$ for each $i>0$.
The object $K_i$ is called an $i$th {\it $\mathcal{X}$-syzygy} of $M$, denoted by $\Omega^{i}_{\mathcal{X}}(M)$. In case  $\mathcal{X}=\mathcal{P}(\xi)$, we have $\xi\text{-}\pd M=\mathcal{X}\text{-}\dim M$ and write $\Omega^{i}(M):=\Omega^{i}_{\mathcal{P}(\xi)}(M)$.
In case  $\mathcal{X}=\mathcal{GP}(\xi)$, $\mathcal{X}\text{-}\dim M$ coincides with $\xi\text{-}\mathcal{G}\pd M$ defined by Hu, Zhang and Zhou \cite{HZZ} as $\xi$-$\mathcal{G}$projective dimension, the proof is straightforward. Dually, the $\xi\text{-}\id M \mbox{ and }\xi\text{-}\mathcal{G}\id M$ are defined.}
\end{definition}

In \cite{HZZ1}, Hu et al. introduced the following condition:

 {\bf Condition(*)}\label{condition*}
If $X\in\mathscr{C}$ and $Y$ is an object in $\mathscr{C}$ with finite $\xi\text{-}\mathcal{G}$projective dimension such that $\xi xt^{i}_{\xi}(Y,X)=0$ for any $i>0$, then $\Hom_{\mathscr{C}}(Y,X)\cong\xi xt^{0}_{\xi}(Y,X).$ Dually, if $X$, $Y$ are objects in $\mathscr{C}$ with finite $\xi\text{-}\mathcal{G}$injective dimension such that $\xi xt^{i}_{\xi}(X,Y)=0$ for any $i>0$, then $\Hom_{\mathscr{C}}(X,Y)\cong\xi xt^{0}_{\xi}(X,Y).$

\begin{remark}\label{2.29}{\rm (\cite[Proposition 4.6 and Corollary 4.8]{HZZ1})} Let $M\in\mathscr{C}$ with finite $\xi\text{-}\mathcal{G}$projective dimension and finite $\xi\text{-}\mathcal{G}$injective dimension respectively. Then
\begin{itemize}

\item[(1)] $\xi\text{-}\mathcal{G}\pd M=\xi\text{-}\pd M.$
\item[(2)] $\xi\text{-}\mathcal{G}\id M=\xi\text{-}\id M.$
\end{itemize}

In this case, if $\mathcal{P}(\xi)$ be a generating subcategory of $\mathscr{C}$ and $\mathcal{I}(\xi)$ is a cogenerating  subcategory of $\mathscr{C}$ and $\mathscr{C}$ satisfies condition(*). Then we have:
\begin{itemize}
\item[(3)] $\sup\{\xi\mbox{-}\mathcal{G}\pd X\mid for~any ~X\in\mathscr{C}\}=\sup\{\xi\mbox{-}\mathcal{G}\id X\mid for~any~X\in\mathscr{C}\}$.
\end{itemize}
\end{remark}

We use ${\mathcal{X}}^{\wedge}$ (resp. ${\mathcal{X}}^{\vee}$) to denote the subcategory
of $\mathscr{C}$ consisting of objects having finite $\mathcal{X}$-resolution (resp. $\mathcal{X}$-coresolution)
dimension, and use ${\mathcal{X}}_{n}^{\wedge}$ (resp. ${\mathcal{X}}_{n}^{\vee}$)
to denote the subcategory of $\mathscr{C}$ consisting of
objects having $\mathcal{X}$-resolution dimension (resp. $\mathcal{X}$-coresolution) at most $n$.

{\it In the following sections, we always assume that  $\mathscr{C}=(\mathscr{C}, \mathbb{E}, \mathfrak{s})$ is an extriangulated category and $\xi$ is a proper class of $\mathbb{E}$-triangles in  $\mathscr{C}$.  We also assume that the extriangulated category $\mathscr{C}$ has enough $\xi$-projectives and enough $\xi$-injectives satisfying Condition (WIC).}

\section{Left $n$-cotorsion pairs}

We first recall the notion of left (resp. right) $n$-cotorsion pairs in extriangulated categories with respect to a proper class of triangles $\xi$. Then we
investigate the properties of left $n$-cotorsion pairs and discuss the relation between $n$-cotorsion pairs and cotorsion pairs.

In what follows, we always assume that $n$ is a positive integer.

\begin{definition}(\cite[Definition 3.1]{ZPY})
Let $\mathcal{U}$ and $\mathcal{V}$ be two subcategories of $\mathscr{C}$.
We say that $(\mathcal{U},\mathcal{V})$ is a \emph{left $n$-cotorsion
pair} in $\mathscr{C}$ if the following conditions are satisfied.
\begin{itemize}
\item[(LN1)] $\mathcal{U}$ is closed under direct summands.
\item[(LN2)] $\xi xt_{\xi}^{1 \leq i \leq n}(\mathcal{U},\mathcal{V})=0$.
\item[(LN3)] Every object $M\in \mathscr{C}$ admits an $\mathbb{E}$-triangle
$$\xymatrix@C=0.5cm{K\ar[r]&U\ar[r]&M\ar@{-->}[r]&}$$
in $\xi$ with $U\in \mathcal{U}$ and $K\in{\mathcal{V}}^{\wedge}_{n-1}$.
\end{itemize}

Dually, we say that $(\mathcal{U},\mathcal{V})$ is a  \emph{right $n$-cotorsion pair}
in $\mathscr{C}$ if the following conditions are satisfied.
\begin{itemize}
\item[(RN1)] $\mathcal{V}$ is closed under direct summands.
\item[(RN2)] $\xi xt_{\xi}^{1 \leq i \leq n}(\mathcal{U},\mathcal{V})=0$.
\item[(RN3)] Every object $N\in \mathscr{C}$ admits an $\mathbb{E}$-triangle
$$\xymatrix@C=0.5cm{N\ar[r]&V'\ar[r]&K'\ar@{-->}[r]&}$$
in $\xi$ with $V'\in \mathcal{V}$ and $K'\in{\mathcal{U}}^{\vee}_{n-1}$.
\end{itemize}

We say that $(\mathcal{U},\mathcal{V})$ is an \emph{$n$-cotorsion pair} in $\mathscr{C}$ if
$(\mathcal{U},\mathcal{V})$ is both a left and right $n$-cotorsion pair in $\mathscr{C}$.

In particular, left (resp. right) $1$-cotorsion pairs are called
\emph{left (resp. right) cotorsion pairs}, and $1$-cotorsion pairs are called
 \emph{cotorsion pairs}.
 \end{definition}

 \begin{remark}\label{remark-contra}
Let $\mathcal{U}$ and $\mathcal{V}$ be subcategories  of $\mathscr{C}$.
\begin{itemize}
\item[(1)] If $(\mathcal{U},\mathcal{V})$ is a left cotorsion pair in $\mathscr{C}$, then
$\mathcal{U}={^{\perp_{1}} \mathcal{V}}$. Moreover, we have that $\mathcal{P}(\xi)\subseteq \mathcal{U}$,
$\mathcal{U}$ is closed under $\xi$-extensions, and
$\mathcal{U}$ is a contravariantly finite subcategory of $\mathscr{C}$.
\end{itemize}

\begin{itemize}
\item[(2)] If $(\mathcal{U},\mathcal{V})$ is a right cotorsion pair in $\mathscr{C}$, then
$\mathcal{V}= \mathcal{U}^{\perp_{1}}$.  Moreover, we have that $\mathcal{I}(\xi)\subseteq \mathcal{V}$,
$\mathcal{V}$ is closed under $\xi$-extensions, and $\mathcal{V}$ is a convariantly finite subcategory
of $\mathscr{C}$.\\
\end{itemize}
\end{remark}

Now we see an example from \cite{ZPY}. We denote by ``$\circ$" in the Auslander-Reiten quiver the indecomposable objects which belong to the subcategory $\mathcal{X}$.
Let $\Lambda$ be the algebra given by the following quiver with relations:
$$\xymatrix@C=0.7cm@R0.2cm{
&\\
-3 \ar[r] \ar@{.}@/^15pt/[rr] &-2 \ar@{.}@/_6pt/[drr]\ar[r] &-1 \ar@{.}@/^8pt/[drrr]\ar[dr] &&&&&&5 \ar[r] \ar@{.}@/^15pt/[rr] &6 \ar[r] &7\\
&&&0 \ar[r] \ar@{.}@/^18pt/[rrr] & 1 \ar[r] \ar@{.}@/^18pt/[rrr] &2 \ar[r] \ar@{.}@/^8pt/[urrr] & 3 \ar[r] \ar@{.}@/_8pt/[drr] &4 \ar[ur] \ar[dr] \ar@{.}@/_6pt/[urr] \ar@{.}@/^6pt/[drr] \\
&-5 \ar[r] \ar@{.}@/^6pt/[urr] &-4 \ar[ur] \ar@{.}@/_8pt/[urr] &&&&&&8 \ar[r] &9
}
$$
There exists a $3$-cluster tilting subcategory $\mathcal{X}$ of~~$\mathscr{C}={\rm mod}\Lambda$ (\cite[Example 4.21]{V})  as follows:
$$\xymatrix@C=0.2cm@R0.2cm{
&&&\circ \ar[dr] &&&&\circ \ar[dr] &&\circ \ar[dr] &&\circ \ar[dr] &&\circ \ar[dr] &&\circ \ar[dr] &&&&\circ \ar[dr]\\
&\mathcal{X}: &\circ \ar@{.}[rr] \ar[ur] &&\cdot \ar[dr] \ar@{.}[rr] &&\cdot \ar[dr] \ar[ur] \ar@{.}[rr] &&\cdot \ar[dr] \ar[ur] \ar@{.}[rr] &&\cdot \ar[dr] \ar[ur] \ar@{.}[rr] &&\cdot \ar[dr] \ar[ur] \ar@{.}[rr] &&\cdot \ar[dr] \ar[ur] \ar@{.}[rr] &&\cdot \ar[dr] \ar@{.}[rr] &&\cdot  \ar[ur] \ar@{.}[rr] &&\circ\\
&&&&&\circ \ar[dr] \ar[ur] \ar@{.}[rr] &&\cdot \ar[ur] \ar@{.}[rr] &&\circ \ar[ur] \ar@{.}[rr] &&\cdot \ar[ur] \ar@{.}[rr] &&\circ \ar[ur] \ar@{.}[rr] &&\cdot \ar[dr] \ar[ur] \ar@{.}[rr] &&\circ \ar[dr] \ar[ur]\\
\circ \ar@{.}[rr] \ar[dr] &&\cdot \ar@{.}[rr] \ar[dr] &&\cdot \ar[ur] \ar@{.}[rr] &&\circ &&&&&&&&&&\circ \ar[ur] \ar@{.}[rr] &&\cdot \ar[dr] \ar@{.}[rr] &&\cdot \ar[dr] \ar@{.}[rr] &&\circ\\
&\circ \ar[ur] &&\circ \ar[ur] &&&&&&&&&&&&&&&&\circ \ar[ur] && \circ \ar[ur]
}
$$
By \cite[Example 3.7]{ZPY},  $(\mathcal{X},\mathcal{X})$ is a $2$-cotorsion pair in $\mathscr{C}$.  See \cite{ZPY} for more examples.

\begin{proposition}\label{prop-1}
Let $\mathcal{U}$ and $\mathcal{V}$ be subcategories of $\mathscr{C}$ with ${\mathcal{V}}\subseteq\bigcap\limits_{i=1}^{n}\mathcal{U}^{\perp_i}$.
 If
$Y\in {\mathcal{V}}^{\wedge}_{k}$ for some $k$ with $0\leq k \leq n-1$, then $Y\in \bigcap\limits_{i=1}^{n-k}\mathcal{U}^{\perp_i}$.
In particular, ${{\mathcal{V}}^{\wedge}_{n-1}}\subseteq\mathcal{U}^{\perp_1}$.
\end{proposition}

\begin{proof}
The case $n=1$ is trivial. Now suppose $n\geq 2$.
The $k=0$ is trivial, so we suppose $1\leq k\leq n-1$.
Let $U\in\mathcal{U}$ and $Y\in {\mathcal{V}}^{\wedge}_{k}$. We will proceed by induction on $k$. For  $k=1$, there is an $\mathbb{E}$-triangle
$\xymatrix@C=0.5cm{V_{1}\ar[r]&V_{0}\ar[r]&Y\ar@{-->}[r]&}$
in $\xi$ with $V_{1},\ V_{0}\in \mathcal{V}$.
Applying the functor $\Hom_{\mathscr{C}}(U,-)$ to the above $\mathbb{E}$-triangle yields the following exact sequence
$$\xymatrix@C=0.5cm{\cdots\ar[r]&\xi xt_{\xi}^{i}(U,V_{0})\ar[r]&
\xi xt_{\xi}^{i}(U,Y)\ar[r]&\xi xt_{\xi}^{i+1}(U,V_{1})\ar[r]&\cdots}.$$
For any $1\leq i\leq n-1$,
since $\xi xt_{\xi}^{i}(U,V_{0})=0=\xi xt_{\xi}^{i+1}(U,V_{1})$, we have $\xi xt_{\xi}^{i}(U,Y)=0$.

Now suppose $2\leq k\leq n-1$.
there is an $\mathbb{E}$-triangle
$\xymatrix@C=0.5cm{Y'\ar[r]&V'_{0}\ar[r]&Y\ar@{-->}[r]&}$
in $\xi$ with $Y'\in\mathcal{V}^{\wedge}_{k-1}$ and $V'_{0}\in \mathcal{V}$.
Applying the functor $\Hom_{\mathscr{C}}(U,-)$ to the above $\mathbb{E}$-triangle yields the following exact sequence
$$\xymatrix@C=0.5cm{\cdots\ar[r]&\xi xt_{\xi}^{i}(U,V'_{0})\ar[r]&
\xi xt_{\xi}^{i}(U,Y)\ar[r]&\xi xt_{\xi}^{i+1}(U,Y')\ar[r]&\cdots}.$$
Since $\xi xt_{\xi}^{1\leq i\leq n-k}(U,V'_{0})=0$ by assumption, and $\xi xt_{\xi}^{1\leq i\leq n-k+1}(U,Y')=0$ by the induction hypothesis,
we have $\xi xt_{\xi}^{1\leq i\leq n-k}(U,Y)=0$.
\end{proof}

The following result gives an equivalent characterization of left $n$-cotorsion pairs.

\begin{lemma}{\rm (cf. \cite[Lemma 3.4]{ZPY})}\label{thm-1}
Let $\mathcal{U}$ and $\mathcal{V}$ be subcategories of $\mathscr{C}$.
Then the following statements are equivalent:
\begin{itemize}
\item[(1)] $(\mathcal{U},\mathcal{V})$ is a left $n$-cotorsion pair in $\mathscr{C}$.
\item[(2)] $\mathcal{U}=\bigcap\limits_{i=1}^{n}$$^{\perp_{i}}{{\mathcal{V}}}$
and for any object $T\in \mathscr{C}$, there is an $\mathbb{E}$-triangle
$$\xymatrix@C=0.5cm{K\ar[r]&U\ar[r]&T\ar@{-->}[r]&}$$
in $\mathcal{\xi}$ with $U\in \mathcal{U}$ and $K\in {\mathcal{V}}^{\wedge}_{n-1}$.
\end{itemize}
Moreover, if one of the above conditions holds true, then $(\mathcal{U},{\mathcal{V}}^{\wedge}_{n-1})$
is a left cotorsion pair in $\mathscr{C}$.
\end{lemma}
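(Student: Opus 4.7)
The plan is to first establish the equivalence (1) $\Leftrightarrow$ (2), then deduce the ``moreover'' statement as an almost immediate consequence. The key technical ingredient throughout is Proposition \ref{prop-1} (equivalently Corollary \ref{cor-1}), which transfers $\xi\mathrm{xt}$-vanishing from $\mathcal{V}$ to $\mathcal{V}^{\wedge}_{n-1}$, together with the splitting criterion coming from the bijection in Remark \ref{fact:2.5'}(2).

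For (1) $\Rightarrow$ (2), first I would note that (LN2) immediately gives $\mathcal{U}\subseteq\bigcap_{i=1}^{n}{}^{\perp_{i}}\mathcal{V}$. For the reverse inclusion, fix $T\in\bigcap_{i=1}^{n}{}^{\perp_{i}}\mathcal{V}$ and apply (LN3) to produce an $\mathbb{E}$-triangle
$$\xymatrix@C=0.5cm{K\ar[r]&U\ar[r]&T\ar@{-->}[r]^{\delta}&}$$
in $\xi$ with $U\in\mathcal{U}$ and $K\in\mathcal{V}^{\wedge}_{n-1}$. By Corollary \ref{cor-1} (case $i=1$), $T\in{}^{\perp_{1}}(\mathcal{V}^{\wedge}_{n-1})$, so $\xi\mathrm{xt}^{1}_{\xi}(T,K)=0$. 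By Remark \ref{fact:2.5'}(2) the extension $\delta$ is split, so $T$ is a direct summand of $U$, and (LN1) forces $T\in\mathcal{U}$. The second clause of (2) is just (LN3) restated.

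For (2) $\Rightarrow$ (1): (LN3) is the second clause of (2). Condition (LN2) is immediate from $\mathcal{U}\subseteq\bigcap_{i=1}^{n}{}^{\perp_{i}}\mathcal{V}$. For (LN1), I would use that for any $V\in\mathcal{V}$, the functor $\xi\mathrm{xt}^{i}_{\xi}(-,V)$ is additive on $\mathscr{C}$; hence direct summands of objects in $\bigcap_{i=1}^{n}{}^{\perp_{i}}\mathcal{V}$ remain in this intersection, which equals $\mathcal{U}$.

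For the ``moreover'' part, I would verify the three axioms of a left cotorsion pair for $(\mathcal{U},\mathcal{V}^{\wedge}_{n-1})$. Axiom (L1) is precisely (LN1), already established. Axiom (L2), namely $\xi\mathrm{xt}^{1}_{\xi}(\mathcal{U},\mathcal{V}^{\wedge}_{n-1})=0$, is the special case $k=n-1$, $i=1$ of Proposition \ref{prop-1}. Axiom (L3) is literally the second clause of condition (2). I do not anticipate a serious obstacle; the most delicate point is simply the splitting step in (1) $\Rightarrow$ (2), which hinges on the correct use of Corollary \ref{cor-1} to see that $\xi\mathrm{xt}^{1}_{\xi}(T,K)$ vanishes even though $K$ only lies in $\mathcal{V}^{\wedge}_{n-1}$ rather than $\mathcal{V}$.
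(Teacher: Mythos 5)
Your proposal is correct. Note that the paper itself gives no proof of this lemma -- it is quoted from Zhou's Lemma 3.4 in \cite{ZPY} -- but your argument is exactly the expected one: (LN2) gives one inclusion, and for the reverse you split the approximation triangle of $T\in\bigcap_{i=1}^{n}{}^{\perp_{i}}\mathcal{V}$ using $\xi\mathrm{xt}^{1}_{\xi}(T,K)=0$ from Corollary \ref{cor-1} together with the bijection of Remark \ref{fact:2.5'}(2) and closure under direct summands; the ``moreover'' clause then follows from Proposition \ref{prop-1} with $k=n-1$. This is also consistent with how the paper uses the lemma elsewhere (e.g.\ the splitting step in the proof of Proposition \ref{prop-3.7}), so there is nothing to add.
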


By   the dual of Lemma \ref{thm-1},  if $(\mathcal{U},\mathcal{V})$ is a right $n$-cotorsion pair in $\mathscr{C}$, then
$({\mathcal{U}}^{\vee}_{n-1},{\mathcal{V}})$
is a right cotorsion pair in $\mathscr{C}$.

In the rest of this section, we give some properties related to (left) $n$-cotorsion pairs.

\begin{proposition}\label{prop-3.7}
Assume that $(\mathcal{U},\mathcal{V})$ is an $n$-cotorsion pair in $\mathscr{C}$.
Then the following statements are equivalent.
\begin{itemize}
\item[(1)] $\mathcal{U}\subseteq \mathcal{V}$.
\item[(2)] $\mathscr{C}={\mathcal{V}}^{\wedge}_{n}$.
\item[(3)] ${\mathcal{U}}^{\vee}_{n-1}\subseteq{^{\perp_1}\mathcal{U}}$.
\end{itemize}
\end{proposition}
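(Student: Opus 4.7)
The plan is to establish $(1)\Leftrightarrow(2)$ and $(1)\Leftrightarrow(3)$; together these give the three-way equivalence. Three of the four implications are handled by a uniform device: produce a canonical $\mathbb{E}$-triangle from the $n$-cotorsion pair axioms and split it by showing that the relevant $\xi{\rm xt}^{1}_{\xi}$-group vanishes, invoking the bijection in Remark~\ref{fact:2.5'}(2). Only $(1)\Rightarrow(2)$ requires a separate splicing argument.

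For $(1)\Rightarrow(2)$, given $M\in\mathscr{C}$ I would apply (LN3) to obtain an $\mathbb{E}$-triangle $K\rightarrow U\rightarrow M$ in $\xi$ with $U\in\mathcal{U}\subseteq\mathcal{V}$ and $K\in\mathcal{V}^{\wedge}_{n-1}$, then splice this with a $\mathcal{V}$-resolution of $K$ of length at most $n-1$ to obtain a $\xi$-exact $\mathcal{V}$-resolution of $M$ of length at most $n$, so $M\in\mathcal{V}^{\wedge}_{n}$. For the converse $(2)\Rightarrow(1)$, take $U\in\mathcal{U}$. Since $\mathscr{C}=\mathcal{V}^{\wedge}_{n}$, the $\mathcal{V}$-resolution dimension $m$ of $U$ is at most $n$; assuming $m\geq 1$, the first $\mathbb{E}$-triangle $W\rightarrow V_{0}\rightarrow U$ of a $\mathcal{V}$-resolution has $V_{0}\in\mathcal{V}$ and $W\in\mathcal{V}^{\wedge}_{m-1}$, so Proposition~\ref{prop-1} (with $k=m-1\leq n-1$) gives $\xi{\rm xt}^{1}_{\xi}(U,W)=0$; the triangle therefore splits, $U$ is a direct summand of $V_{0}\in\mathcal{V}$, and the fact that $\mathcal{V}$ is closed under direct summands (from (RN1)) yields $U\in\mathcal{V}$.

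For $(1)\Rightarrow(3)$, given $X\in\mathcal{U}^{\vee}_{n-1}$ and $U\in\mathcal{U}\subseteq\mathcal{V}$, the dual of Proposition~\ref{prop-1} applied with $k=n-1$ gives $\xi{\rm xt}^{1}_{\xi}(X,\mathcal{V})=0$, and in particular $\xi{\rm xt}^{1}_{\xi}(X,U)=0$. For $(3)\Rightarrow(1)$, take $U\in\mathcal{U}$ and apply (RN3) of the right $n$-cotorsion pair to produce an $\mathbb{E}$-triangle $U\rightarrow V\rightarrow K$ in $\xi$ with $V\in\mathcal{V}$ and $K\in\mathcal{U}^{\vee}_{n-1}$; condition (3) forces $\xi{\rm xt}^{1}_{\xi}(K,U)=0$, so this triangle splits and $U$ is a direct summand of $V\in\mathcal{V}$, hence $U\in\mathcal{V}$. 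The only mildly subtle point is the splicing in $(1)\Rightarrow(2)$, but it is immediate from the definition of a $\xi$-exact complex together with closure of $\xi$ under cobase change; no real obstacle arises.
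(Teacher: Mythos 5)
Your proof is correct and follows essentially the same route as the paper: the paper dismisses $(1)\Rightarrow(2)$ as clear (your splicing argument is exactly what is meant), proves $(2)\Rightarrow(1)$ by the same splitting of the approximation triangle via $\xi{\rm xt}^{1}_{\xi}(\mathcal{U},\mathcal{V}^{\wedge}_{n-1})=0$, and derives $(1)\Leftrightarrow(3)$ from the dual of Lemma~\ref{thm-1}, whose content is precisely the two arguments you unfold (the dual of Proposition~\ref{prop-1} for one direction and the split (RN3)-triangle for the other). Your version merely makes explicit the use of (RN1) for closure of $\mathcal{V}$ under direct summands, which the paper leaves implicit.
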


\begin{proof}
$(1)\Longrightarrow(2)$ Clearly.

$(2)\Longrightarrow(1)$ Let $U\in \mathcal{U}\subseteq \mathscr{C}$. By assumption, there is an $\mathbb{E}$-triangle
$\xymatrix@C=0.5cm{K\ar[r]&V_{0}\ar[r]&U\ar@{-->}[r]&}$
in $\xi$ with $K\in {\mathcal{V}}^{\wedge}_{n-1}$ and $V_{0}\in \mathcal{V}$.
By Lemma \ref{thm-1}, $(\mathcal{U},{\mathcal{V}}^{\wedge}_{n-1})$
is a left cotorsion pair in $\mathscr{C}$. In particular, $\xi xt^1_{\xi}(\mathcal{U},{\mathcal{V}}^{\wedge}_{n-1})=0$, which shows that the above $\mathbb{E}$-triangle is split. Thus  $U\in \mathcal{V}$ as a direct summand of $V_{0}$. Therefore, $\mathcal{U}\subseteq \mathcal{V}$.

$(1)\Longleftrightarrow(3)$ It follows from the dual of Lemma \ref{thm-1}.
\end{proof}

It is clear that ${\mathcal{V}}^{\wedge}=\mathcal{V}$ if $\mathcal{V}$ is a coresolving subcategory,
and ${\mathcal{U}}^{\vee}=\mathcal{U}$ if $\mathcal{U}$ is a resolving subcategory.
By Proposition \ref{prop-3.7} and Lemma \ref{thm-1}, we have the following result.

\begin{proposition}\label{prop-left-resolving}
Let $(\mathcal{U},\mathcal{V})$ be an $n$-cotorsion pair in $\mathscr{C}$ with
$\mathcal{U}\subseteq{^{\perp_{n+1}}\mathcal{V}}$. Then  the following statements are equivalent.
\begin{itemize}
\item[(1)] $\mathcal{U}\subseteq \mathcal{V}$.
\item[(2)] $\mathscr{C}=\mathcal{V}$.
\item[(3)] $\mathcal{U}\subseteq{^{\perp_1}\mathcal{U}}$.
\end{itemize}
\end{proposition}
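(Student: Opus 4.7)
The plan is to reduce the three-way equivalence to Proposition~\ref{prop-3.7} by showing that the extra hypothesis $\xi xt^{n+1}_\xi(\mathcal{U},\mathcal{V})=0$ promotes $\mathcal{V}$ to a coresolving subcategory and $\mathcal{U}$ to a resolving subcategory of $\mathscr{C}$. Once this is done, the identifications $\mathcal{V}^\wedge=\mathcal{V}$ and $\mathcal{U}^\vee=\mathcal{U}$ supplied by the note preceding the statement collapse Proposition~\ref{prop-3.7}'s three conditions onto (1)--(3) here.

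First I would verify that $\mathcal{V}$ is coresolving. The dual of Lemma~\ref{thm-1} yields $\mathcal{V}=\bigcap_{i=1}^n\mathcal{U}^{\perp_i}$, which gives $\mathcal{I}(\xi)\subseteq\mathcal{V}$ and closure under $\xi$-extensions for free, while (RN1) supplies closure under direct summands. The decisive point is closure under cones of $\xi$-inflations: given $\xymatrix@C=0.4cm{A\ar[r]&B\ar[r]&C\ar@{-->}[r]&}$ in $\xi$ with $A,B\in\mathcal{V}$, apply $\Hom_{\mathscr{C}}(U,-)$ for $U\in\mathcal{U}$; for each $1\leq i\leq n$, the segment
\[
\xi xt^i_\xi(U,B)\longrightarrow\xi xt^i_\xi(U,C)\longrightarrow\xi xt^{i+1}_\xi(U,A)
\]
of the long exact sequence has both outer terms vanishing (the left by the $n$-cotorsion pair hypothesis, the right by that hypothesis when $i<n$ and by the extra hypothesis when $i=n$), so $\xi xt^i_\xi(U,C)=0$ and $C\in\mathcal{V}$. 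Dually, applying $\Hom_{\mathscr{C}}(-,V)$ shows $\mathcal{U}$ is closed under cocones of $\xi$-deflations; together with (LN1), $\mathcal{P}(\xi)\subseteq\mathcal{U}$, and closure under $\xi$-extensions (both from $\mathcal{U}=\bigcap_{i=1}^n{^{\perp_i}\mathcal{V}}$ via Lemma~\ref{thm-1}), this makes $\mathcal{U}$ resolving.

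With $\mathcal{V}^\wedge=\mathcal{V}$ and $\mathcal{U}^\vee=\mathcal{U}$ in hand, the chain $\mathcal{V}\subseteq\mathcal{V}^\wedge_n\subseteq\mathcal{V}^\wedge=\mathcal{V}$ forces $\mathcal{V}^\wedge_n=\mathcal{V}$, so Proposition~\ref{prop-3.7}(2) ($\mathscr{C}=\mathcal{V}^\wedge_n$) coincides with (2) here. Likewise $\mathcal{U}^\vee_{n-1}=\mathcal{U}$, so $\xi xt^1_\xi(\mathcal{U}^\vee_{n-1},\mathcal{U})=\xi xt^1_\xi(\mathcal{U},\mathcal{U})$, matching Proposition~\ref{prop-3.7}(3) with (3) here; condition (1) is common. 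Proposition~\ref{prop-3.7} then delivers the full three-way equivalence.

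The main obstacle lies in the closure-under-cones step: this is the single place where $\xi xt^{n+1}_\xi(\mathcal{U},\mathcal{V})=0$ is used in an essential way, since without it $\xi xt^n_\xi(U,C)$ would be pinned only into the potentially nonzero $\xi xt^{n+1}_\xi(U,A)$, and the upgrade from $n$-cotorsion closure to full (co)resolving status would fail.
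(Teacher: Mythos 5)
Your proof is correct and follows essentially the same route as the paper: use the extra hypothesis $\xi xt_{\xi}^{n+1}(\mathcal{U},\mathcal{V})=0$ together with Lemma~\ref{thm-1} to show $\mathcal{U}$ is resolving (the paper's argument is literally your dual step with $\Hom_{\mathscr{C}}(-,V)$), and then invoke Proposition~\ref{prop-3.7} after identifying $\mathcal{U}^{\vee}_{n-1}=\mathcal{U}$ and $\mathcal{V}^{\wedge}_{n}=\mathcal{V}$. If anything, your version is slightly more complete, since you also verify explicitly that $\mathcal{V}$ is coresolving, which is what actually lets condition (2) of Proposition~\ref{prop-3.7} collapse to $\mathscr{C}=\mathcal{V}$; the paper leaves this to duality.
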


\begin{proof}
By Proposition \ref{prop-3.7}, it suffices to show that $\mathcal{U}$ is resolving. By Lemma \ref{thm-1}, $(\mathcal{U},\mathcal{V}^{\wedge}_{n-1})$ is a left cotorsion pair in $\mathscr{C}$.
By Remark \ref{remark-contra},  $\mathcal{U}$ is closed under $\xi$-extensions and
$\mathcal{P}(\xi)\subseteq \mathcal{U}$. Now, given an $\mathbb{E}$-triangle
$\xymatrix@C=0.5cm{U\ar[r]&U'\ar[r]&U''\ar@{-->}[r]&}$
 in $\xi$ with $U',\ U''\in \mathcal{U}$.
For any $V\in \mathcal{V}$, applying the functor $\Hom_{\mathscr{C}}(-,V)$
to the above $\mathbb{E}$-triangle yields the following exact sequence
$$\xymatrix@C=0.5cm{\cdots\ar[r]&\xi xt^{i}_{\xi}(U',V)\ar[r]
&\xi xt^{i}_{\xi}(U,V)\ar[r]&\xi xt^{i+1}_{\xi}(U'',V)\ar[r]&\cdots}.$$
By assumption $\xi xt_{\xi}^{1\leq i\leq n+1}(\mathcal{U},\mathcal{V})=0$, this implies $\xi xt_{\xi}^{1\leq i\leq n}(U,V)=0$.
Thus $U\in \bigcap\limits_{i=1}^{n}{^{\perp_{i}}\mathcal{V}}=\mathcal{U}$ by Lemma \ref{thm-1},
and hence $\mathcal{U}$ is closed under cocones of $\xi$-deflations. Therefore,
$\mathcal{U}$ is resolving.
\end{proof}

The following result establishes a relation between $n$-cotorsion pairs and cotorsion pairs.

\begin{proposition}\label{prop-6}
Let $\mathcal{U}$ and $\mathcal{V}$ be subcategories in $\mathscr{C}$. Then the following statements are equivalent.
\begin{itemize}
\item[(1)] $(\mathcal{U},\mathcal{V})$ is an $n$-cotorsion pair in $\mathscr{C}$ with
$\mathcal{U}\subseteq{^{\perp_{n+1}}\mathcal{V}}$.
\item[(2)] $(\mathcal{U},\mathcal{V})$ is an $n$-cotorsion pair
in $\mathscr{C}$ with $\mathcal{U}$ a resolving subcategory.
\item[(3)] $(\mathcal{U},\mathcal{V})$ is an $n$-cotorsion pair
in $\mathscr{C}$ with $\mathcal{V}$ a coresolving subcategory.
\item[(4)] $(\mathcal{U},\mathcal{V})$ is a cotorsion pair
in $\mathscr{C}$ with $\mathcal{U}$ a resolving subcategory.
\item[(5)] $(\mathcal{U},\mathcal{V})$ is a cotorsion pair
in $\mathscr{C}$ with $\mathcal{V}$ a coresolving subcategory.
\end{itemize}
Moreover, if one of the above conditions holds true, then
$\mathcal{U}\subseteq{^{\perp}\mathcal{V}}$.
\end{proposition}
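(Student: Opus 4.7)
My plan is to take (1) as the hub and establish (1)$\Leftrightarrow$(2), (1)$\Leftrightarrow$(3), (1)$\Leftrightarrow$(4), (1)$\Leftrightarrow$(5), harvesting the ``moreover'' vanishing $\xi xt^{i\geq 1}_{\xi}(\mathcal{U},\mathcal{V})=0$ along the way. The central technical tool is the dimension shift $\xi xt^{n+k}_{\xi}(U,V)\cong \xi xt^{n}_{\xi}(\Omega^k(U),V)$, which follows from Remark \ref{remark-ds} applied to a $\xi$-projective resolution of $U$ (its terms are $\xi$-projective, hence lie in ${}^{\perp}V$).

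For (1)$\Rightarrow$(2) I would simply invoke Proposition \ref{prop-left-resolving}, whose proof shows that the hypothesis $\xi xt^{n+1}_{\xi}(\mathcal{U},\mathcal{V})=0$ already forces $\mathcal{U}$ to be resolving. Conversely, (2)$\Rightarrow$(1) and the ``moreover'' clause both fall out of the syzygy argument: if $\mathcal{U}$ is resolving then $\mathcal{P}(\xi)\subseteq\mathcal{U}$ and $\mathcal{U}$ is closed under cocones of $\xi$-deflations, so every syzygy $\Omega^k(U)$ of an object $U\in\mathcal{U}$ lies in $\mathcal{U}$; the dimension shift then gives $\xi xt^{n+k}_{\xi}(U,V)\cong \xi xt^{n}_{\xi}(\Omega^k(U),V)=0$ for every $V\in\mathcal{V}$ and every $k\geq 0$. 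Entirely dual arguments using $\xi$-injective coresolutions and cosyzygies of objects of $\mathcal{V}$ yield (1)$\Leftrightarrow$(3).

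The main step is (1)$\Rightarrow$(4), where I must upgrade the $n$-cotorsion pair to a genuine cotorsion pair: the $\mathbb{E}$-triangle in (LN3) has $K\in\mathcal{V}^{\wedge}_{n-1}$, and I need $K\in\mathcal{V}$. Since Lemma \ref{thm-1} already asserts that $(\mathcal{U},\mathcal{V}^{\wedge}_{n-1})$ is a left cotorsion pair, it suffices to prove $\mathcal{V}^{\wedge}_{n-1}\subseteq\mathcal{V}$; and because the dual of Lemma \ref{thm-1} identifies $\mathcal{V}=\bigcap_{i=1}^{n}\mathcal{U}^{\perp_i}$, this reduces to verifying $\xi xt^{1\leq i\leq n}_{\xi}(\mathcal{U},Y)=0$ for $Y\in\mathcal{V}^{\wedge}_{n-1}$. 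I argue by induction on the resolution dimension of $Y$: given an $\mathbb{E}$-triangle $Y'\to V_0\to Y\dashrightarrow$ in $\xi$ with $V_0\in\mathcal{V}$ and $Y'$ of smaller resolution dimension, the long exact sequence from Remark \ref{remark-long}(1), together with the ``moreover'' vanishing for $V_0$ and the inductive hypothesis for $Y'$, forces $\xi xt^{j\geq 1}_{\xi}(\mathcal{U},Y)=0$. A symmetric argument on the $\xi$-injective side yields $\mathcal{U}^{\vee}_{n-1}\subseteq\mathcal{U}$, converting (RN3) into (R3); the same long-exact-sequence machinery also shows $\mathcal{V}$ is closed under $\xi$-extensions and cones of $\xi$-inflations, and since any $\xi$-injective object splits off the right $n$-cotorsion-pair triangle we get $\mathcal{I}(\xi)\subseteq\mathcal{V}$, so $\mathcal{V}$ is coresolving and (1)$\Rightarrow$(5) follows. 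The converses (4)$\Rightarrow$(1) and (5)$\Rightarrow$(1) are immediate from the syzygy/cosyzygy argument combined with the trivial inclusions $\mathcal{V}\subseteq\mathcal{V}^{\wedge}_{n-1}$ and $\mathcal{U}\subseteq\mathcal{U}^{\vee}_{n-1}$. The principal obstacle throughout is extracting the full-range Ext vanishing from the single hypothesis $\xi xt^{n+1}_{\xi}(\mathcal{U},\mathcal{V})=0$, which is precisely the content of Proposition \ref{prop-left-resolving} combined with the syzygy trick.
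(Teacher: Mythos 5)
Your proposal is correct and follows essentially the same route as the paper's proof: (1)$\Leftrightarrow$(2) via Proposition \ref{prop-left-resolving} together with the $\xi$-projective syzygy dimension shift (which also yields the ``moreover'' vanishing), dual arguments for (3) and (5), and Lemma \ref{thm-1} with its dual to pass between the $n$-cotorsion and cotorsion conditions. The only cosmetic difference is in upgrading (LN3)/(RN3) to (L3)/(R3): the paper uses that $\mathcal{V}$ coresolving (resp.\ $\mathcal{U}$ resolving) forces $\mathcal{V}^{\wedge}_{n-1}=\mathcal{V}$ (resp.\ $\mathcal{U}^{\vee}_{n-1}=\mathcal{U}$), whereas you prove the inclusions $\mathcal{V}^{\wedge}_{n-1}\subseteq\mathcal{V}$ and $\mathcal{U}^{\vee}_{n-1}\subseteq\mathcal{U}$ by an Ext-vanishing induction (essentially Proposition \ref{prop-1} strengthened by the full-range vanishing) combined with the perpendicular characterizations $\mathcal{U}=\bigcap_{i=1}^{n}{}^{\perp_i}\mathcal{V}$ and $\mathcal{V}=\bigcap_{i=1}^{n}\mathcal{U}^{\perp_i}$ --- the same ingredients in a slightly different order.
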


\begin{proof}
$(1)\Longrightarrow(2)$ It follows from Proposition \ref{prop-left-resolving}.

$(2)\Longrightarrow(1)$
Let $U\in \mathcal{U}$ and $V\in \mathcal{V}$.
Consider an   $\mathbb{E}$-triangle
$\xymatrix@C=0.5cm{U'\ar[r]&P\ar[r]&U\ar@{-->}[r]&}$
in $\xi$ with $P\in \mathcal{P}(\xi)$.
Since $\mathcal{U}$ is resolving, then $U'\in \mathcal{U}$.
By the dimension shifting,
we have $\xi xt_{\xi}^{n+1}(\mathcal{U},\mathcal{V})=0$.

$(1)\Longleftrightarrow(3)$ It is a dual of $(1)\Longleftrightarrow(2)$.

$(2)\Longrightarrow(4)$ or $(3)\Longrightarrow(4)$
{By Lemma \ref{thm-1}, $(\mathcal{U},\mathcal{V}^{\wedge}_{n-1})$
is a left cotorsion pair in $\mathscr{C}$. Since $\mathcal{V}$ is coresolving,
$\mathcal{V}=\mathcal{V}^{\wedge}_{n-1}$, then $(\mathcal{U},\mathcal{V})$
is a left cotorsion pair in $\mathscr{C}$. Dually, $(\mathcal{U},\mathcal{V})$
is a right cotorsion pair in $\mathscr{C}$. Thus $(\mathcal{U},\mathcal{V})$
is a cotorsion pair in $\mathscr{C}$, and $\mathcal{U}$ is resolving.}

$(4)\Longrightarrow(2)$ By using an argument similar to
that of the implication $(2)\Longrightarrow(1)$,
we get $\xi xt_{\xi}^{1\leq i\leq n}(\mathcal{U},\mathcal{V})=0$.

$(3)\Longrightarrow(5)$ By the dual of Lemma 3.6.

$(5)\Longrightarrow(3)$ By  the dual of $(4)\Longrightarrow(2)$.
\end{proof}

By Proposition \ref{prop-6}, we immediately have the following result.

\begin{corollary}\label{lemma-resolving}
Let  $(\mathcal{U},\mathcal{V})$ be a cotorsion pair in $\mathscr{C}$.
Then the following statements are equivalent.
\begin{itemize}
\item[(1)] $\mathcal{U}\subseteq{^{\perp_2}\mathcal{V}}$.
\item[(2)] $\mathcal{U}$ is a resolving subcategory.
\item[(3)] $\mathcal{V}$ is a coresolving subcategory.
\end{itemize}
Moreover, if one of the above conditions holds true, then
$\mathcal{U}\subseteq{^{\perp}\mathcal{V}}$.
\end{corollary}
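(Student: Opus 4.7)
The plan is to obtain this as an immediate specialization of Proposition \ref{prop-6} to the case $n=1$. Recall the remark immediately after the definition of $n$-cotorsion pairs: a left (resp.\ right) $1$-cotorsion pair is exactly a left (resp.\ right) cotorsion pair, and hence a $1$-cotorsion pair is exactly a cotorsion pair. Therefore, when $n=1$, conditions (1), (4), (5) of Proposition \ref{prop-6} read respectively as: (i) $(\mathcal{U},\mathcal{V})$ is a cotorsion pair with $\xi xt^{2}_{\xi}(\mathcal{U},\mathcal{V})=0$; (ii) $(\mathcal{U},\mathcal{V})$ is a cotorsion pair with $\mathcal{U}$ resolving; (iii) $(\mathcal{U},\mathcal{V})$ is a cotorsion pair with $\mathcal{V}$ coresolving. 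Under the standing hypothesis that $(\mathcal{U},\mathcal{V})$ is a cotorsion pair, these are precisely statements (1), (2), (3) of the corollary, so their equivalence is already contained in Proposition \ref{prop-6}.

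The moreover clause is likewise the moreover clause of Proposition \ref{prop-6} at $n=1$, but it can also be re-derived directly once we know $\mathcal{U}$ is resolving. The argument is a standard dimension shift: for any $U\in\mathcal{U}$, since $\mathscr{C}$ has enough $\xi$-projectives, there is an $\mathbb{E}$-triangle
$$\xymatrix@C=0.5cm{\Omega(U)\ar[r]&P\ar[r]&U\ar@{-->}[r]&}$$
in $\xi$ with $P\in\mathcal{P}(\xi)$; because $\mathcal{U}$ is resolving (closed under cocones of $\xi$-deflations), we have $\Omega(U)\in\mathcal{U}$. Applying $\Hom_{\mathscr{C}}(-,V)$ for any $V\in\mathcal{V}$ and invoking the long exact sequence of Remark \ref{remark-long}(1), the vanishing $\xi xt^{i\geq 1}_{\xi}(P,V)=0$ yields an isomorphism $\xi xt^{i+1}_{\xi}(U,V)\cong \xi xt^{i}_{\xi}(\Omega(U),V)$ for every $i\geq 1$.

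Starting from the base $\xi xt^{1}_{\xi}(\mathcal{U},\mathcal{V})=0$ built into the cotorsion pair, an easy induction on $i\geq 1$ (replacing $U$ by $\Omega(U), \Omega^{2}(U),\ldots$, each of which remains in $\mathcal{U}$) then gives $\xi xt^{i}_{\xi}(\mathcal{U},\mathcal{V})=0$ for all $i\geq 1$, as asserted.

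There is essentially no obstacle in the proof; the only thing requiring verification is the bookkeeping check that the numbered conditions of Proposition \ref{prop-6} correctly match those of the corollary after setting $n=1$, which the remark about $1$-cotorsion pairs provides.
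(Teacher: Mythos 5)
Your proposal is correct and matches the paper exactly: the paper derives this corollary immediately from Proposition \ref{prop-6} by taking $n=1$, using the observation that $1$-cotorsion pairs are precisely cotorsion pairs. Your extra dimension-shifting argument for the moreover clause is sound but not needed, since it is already contained in the moreover clause of Proposition \ref{prop-6}.
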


\begin{lemma}\label{lemma-2}
Let $\mathcal{U}$ and $\mathcal{V}$ be subcategories of $\mathscr{C}$
such that $\mathcal{U}\subseteq\bigcap\limits_{i=1}^{n}$$^{\perp_{i}}{{\mathcal{V}}}$. Then
${\mathcal{U}}^{\wedge}_{k}\subseteq {^{\perp_{k+1}}}\mathcal{V}$ for any $0\leq k\leq n-1$.
\end{lemma}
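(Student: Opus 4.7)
The plan is to prove the statement by induction on $k$, using the long exact sequence of $\xi$-cohomology associated to the short $\mathbb{E}$-triangle that realizes the first syzygy of a resolution.

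The base case $k=0$ is immediate: $\mathcal{U}^{\wedge}_{0}=\mathcal{U}$, and the hypothesis $\xi xt^{1\leq i\leq n}_{\xi}(\mathcal{U},\mathcal{V})=0$ covers $i=1$ since $n\geq 1$, giving $\mathcal{U}\subseteq {}^{\perp_{1}}\mathcal{V}$.

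For the inductive step, fix $1\leq k\leq n-1$ and assume the result holds for $k-1$, i.e.\ $\mathcal{U}^{\wedge}_{k-1}\subseteq {}^{\perp_{k}}\mathcal{V}$. Take any $M\in \mathcal{U}^{\wedge}_{k}$ and any $V\in \mathcal{V}$. By the definition of $\mathcal{U}$-resolution dimension, there is a $\xi$-exact complex $U_{k}\to U_{k-1}\to\cdots\to U_{0}\to M$ with each $U_{i}\in\mathcal{U}$, and therefore an $\mathbb{E}$-triangle
$$\xymatrix@C=0.5cm{K\ar[r]&U_{0}\ar[r]&M\ar@{-->}[r]&}$$
in $\xi$ with $U_{0}\in\mathcal{U}$ and $K\in\mathcal{U}^{\wedge}_{k-1}$ (the object $K$ being the first $\mathcal{U}$-syzygy of $M$, which inherits a resolution of length at most $k-1$). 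Applying $\Hom_{\mathscr{C}}(-,V)$ and invoking Remark \ref{remark-long} yields a long exact sequence containing the segment
$$\xymatrix@C=0.5cm{\xi xt^{k}_{\xi}(K,V)\ar[r]&\xi xt^{k+1}_{\xi}(M,V)\ar[r]&\xi xt^{k+1}_{\xi}(U_{0},V).}$$
The right-hand term vanishes since $U_{0}\in\mathcal{U}$ and $k+1\leq n$, so the hypothesis $\xi xt^{1\leq i\leq n}_{\xi}(\mathcal{U},\mathcal{V})=0$ applies; the left-hand term vanishes by the induction hypothesis applied to $K\in\mathcal{U}^{\wedge}_{k-1}$, which forces $\xi xt^{k}_{\xi}(K,V)=0$. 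Exactness then gives $\xi xt^{k+1}_{\xi}(M,V)=0$, so $M\in {}^{\perp_{k+1}}\mathcal{V}$.

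The argument is essentially routine once the induction is set up correctly; the only point that requires care is ensuring that the first syzygy $K$ in the chosen $\xi$-exact resolution genuinely satisfies $K\in\mathcal{U}^{\wedge}_{k-1}$ (so that the induction hypothesis applies), which follows by truncating the given $\xi$-exact complex. No deeper property of $\xi$ or of the subcategories is needed beyond the long exact sequence of $\xi xt_{\xi}$ recalled in Remark \ref{remark-long} and the hypothesis that the $\xi xt$-groups vanish in degrees $1$ through $n$.
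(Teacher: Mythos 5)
Your proof is correct and follows essentially the same route as the paper: induction on $k$, peeling off the first $\mathcal{U}$-syzygy via an $\mathbb{E}$-triangle $K\to U_0\to M$ in $\xi$ with $K\in\mathcal{U}^{\wedge}_{k-1}$, and reading off the vanishing of $\xi xt^{k+1}_{\xi}(M,V)$ from the long exact sequence. The only cosmetic difference is that the paper handles $k=1$ as a separate case (with both terms of the triangle in $\mathcal{U}$), whereas your unified inductive step subsumes it.
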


\begin{proof}
The case $k=0$ is trivial.
Let $X\in {\mathcal{U}}^{\wedge}_{k}$ and $V\in \mathcal{V}$. We will proceed by induction on $k$.
For  $k=1$, there is an $\mathbb{E}$-triangle
$\xymatrix@C=0.5cm{U_{1}\ar[r]&U_{0}\ar[r]&X\ar@{-->}[r]&}$
in $\xi$ with $U_{1},\ U_{0}\in{\mathcal{U}}$.
By the dimension shifting, we have
$\xi xt^{2}_{\xi}(X,V)=0$, and hence $X\in{^{\perp_{2}}}\mathcal{V}$.

Now suppose $2\leq k\leq n-1$. There is an $\mathbb{E}$-triangle
$\xymatrix@C=0.5cm{K\ar[r]&U'_{0}\ar[r]&X\ar@{-->}[r]&}$
in $\xi$ with $U'_{0}\in{\mathcal{U}}$ and $K\in {\mathcal{U}}^{\wedge}_{k-1}$.
Applying the functor $\Hom_\mathscr{C}(-,V)$ to the above $\mathbb{E}$-triangle yields the following exact sequence
$$\xymatrix@C=0.5cm{\cdots\ar[r]&\xi xt^{k}_{\xi}(K,V)
\ar[r]&\xi xt^{k+1}_{\xi}(X,V)\ar[r]&\xi xt^{k+1}_{\xi}(U'_{0},V)\ar[r]&\cdots}.$$
Since $\xi xt^{k}_{\xi}(K,V)=0$ by the induction hypothesis, and since $\xi xt^{k+1}_{\xi}(U'_{0},V)=0$ by assumption,
we have $\xi xt^{k+1}_{\xi}(X,V)=0$ and $X\in{^{\perp_{k+1}}}\mathcal{V}$.
Thus $\mathcal{U}^{\wedge}_{k}\subseteq {^{\perp_{k+1}}\mathcal{V}}$ for any $0\leq k\leq n-1$.
\end{proof}

As a consequence, we get the following proposition.

\begin{proposition}\label{prop-7}
Let $(\mathcal{U},\mathcal{V})$ be a left $n$-cotorsion pair in $\mathscr{C}$.
Then the following statements are equivalent.
\begin{itemize}
\item[(1)] $\mathcal{U}={^{\perp_{1}}\mathcal{V}}$.
\item[(2)] ${\mathcal{U}}^{\wedge}_{k}= {^{\perp_{k+1}}}\mathcal{V}$ for any $0\leq k\leq n-1$.
\end{itemize}
\end{proposition}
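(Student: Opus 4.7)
The plan is to prove the equivalence by induction on $k$. The direction $(2)\Rightarrow(1)$ is immediate: setting $k=0$ in (2) yields $\mathcal{U}=\mathcal{U}^{\wedge}_{0}={^{\perp_{1}}}\mathcal{V}$, which is precisely (1).

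For $(1)\Rightarrow(2)$, I would induct on $k$ in the range $0\leq k\leq n-1$. The base case $k=0$ is exactly the hypothesis (1), since $\mathcal{U}^{\wedge}_{0}=\mathcal{U}$. Assume the equality holds for $k-1$, where $1\leq k\leq n-1$. The inclusion $\mathcal{U}^{\wedge}_{k}\subseteq{^{\perp_{k+1}}}\mathcal{V}$ is furnished directly by Lemma \ref{lemma-2}, since a left $n$-cotorsion pair satisfies $\xi xt^{1\leq i\leq n}_{\xi}(\mathcal{U},\mathcal{V})=0$.

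For the reverse inclusion ${^{\perp_{k+1}}}\mathcal{V}\subseteq\mathcal{U}^{\wedge}_{k}$, let $X\in{^{\perp_{k+1}}}\mathcal{V}$. Applying axiom (LN3) to $X$ yields an $\mathbb{E}$-triangle
$$\xymatrix@C=0.5cm{K\ar[r]&U_{0}\ar[r]&X\ar@{-->}[r]&}$$
in $\xi$ with $U_{0}\in\mathcal{U}$ and $K\in\mathcal{V}^{\wedge}_{n-1}$. I claim $K\in{^{\perp_{k}}}\mathcal{V}$. Indeed, applying $\Hom_{\mathscr{C}}(-,V)$ for $V\in\mathcal{V}$ and invoking the long exact sequence of Remark \ref{remark-long} produces
$$\xymatrix@C=0.5cm{\xi xt^{k}_{\xi}(U_{0},V)\ar[r]&\xi xt^{k}_{\xi}(K,V)\ar[r]&\xi xt^{k+1}_{\xi}(X,V)\ar[r]&\xi xt^{k+1}_{\xi}(U_{0},V).}$$
Since $1\leq k\leq n-1$, both $k$ and $k+1$ fall in $[1,n]$, so the outer terms vanish by the left $n$-cotorsion pair hypothesis; the third term vanishes because $X\in{^{\perp_{k+1}}}\mathcal{V}$. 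Hence $\xi xt^{k}_{\xi}(K,V)=0$, so $K\in{^{\perp_{k}}}\mathcal{V}$. By the inductive hypothesis, $K\in\mathcal{U}^{\wedge}_{k-1}$. Splicing a length-$(k-1)$ $\xi$-exact $\mathcal{U}$-resolution of $K$ with the above $\mathbb{E}$-triangle (via the common syzygy $K$) then yields a length-$k$ $\xi$-exact $\mathcal{U}$-resolution of $X$, so $X\in\mathcal{U}^{\wedge}_{k}$.

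The main point requiring care is the splicing step: one must verify that concatenating a $\xi$-exact sequence of $\mathcal{U}$-objects over $K$ with the $\mathbb{E}$-triangle $K\to U_{0}\to X$ produces a $\xi$-exact complex in the precise sense of the paper's definition. This reduces to observing that the required $\mathbb{E}$-triangle at the junction is $K\to U_{0}\to X$ itself, which belongs to $\xi$ by construction, while the remaining junctions come from the $\mathcal{U}$-resolution of $K$. Beyond this bookkeeping, the argument is a direct induction driven by the vanishing in the long exact sequence.
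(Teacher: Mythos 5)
Your proof is correct and follows essentially the same route as the paper: the inclusion $\mathcal{U}^{\wedge}_{k}\subseteq{^{\perp_{k+1}}}\mathcal{V}$ via Lemma \ref{lemma-2}, and the reverse inclusion by resolving $X$ with (LN3)-triangles and dimension shifting against $\mathcal{V}$. The only difference is organizational: you shift one degree at a time by induction on $k$, whereas the paper iterates the (LN3) construction $k$ times and invokes Remark \ref{remark-ds} once; your step-by-step version makes it slightly more transparent that only the vanishing $\xi xt^{1\leq i\leq n}_{\xi}(\mathcal{U},\mathcal{V})=0$ in the range $[1,n]$ is actually needed.
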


\begin{proof}
$(2)\Longrightarrow(1)$ It is trivial by setting $k=0$ in (2).

$(1)\Longrightarrow(2)$ The case $k=0$ is clear.
Now assume $k\geq 1$. By Lemma \ref{lemma-2},
${\mathcal{U}}^{\wedge}_{k}\subseteq {^{\perp_{k+1}}}\mathcal{V}$.
Conversely, let $Y\in{^{\perp_{k+1}}}\mathcal{V}$.
Consider the following $\mathbb{E}$-triangle
$\xymatrix@C=0.5cm{K_{1}\ar[r]&U_{0}\ar[r]&Y\ar@{-->}[r]&}$
in $\xi$ with $U_{0}\in{\mathcal{U}}$ and $K_{1}\in {\mathcal{V}}^{\wedge}_{n-1}$.
Repeating this process, we get the following $\xi$-exact complex
$$\xymatrix@C=0.5cm{K_{k}\ar[r]&U_{k-1}\ar[r]&\cdots\ar[r]&U_{1}\ar[r]&U_{0}\ar[r]&Y}$$
with $U_{i}\in \mathcal{U}$ for $0\leq i\leq k-1$. Applying the functor $\Hom_{\mathscr{C}}(-,V)$ to it, $\xi xt^{i\geq1}_{\xi}(\mathcal{U},V)=0$ since $(\mathcal{U},\mathcal{V})$ is a left $n$-cotorsion pair. By Remark \ref{remark-ds},
$\xi xt^{1}_{\xi}(K_{k},V)\cong\xi xt^{k+1}_{\xi}(Y,V)=0$, which implies
$K_{k}\in {^{\perp_{1}}}\mathcal{V}=\mathcal{U}$ by assumption.
Hence $Y\in {\mathcal{U}}^{\wedge}_{k}$ and ${^{\perp_{k+1}}}\mathcal{V}\subseteq{\mathcal{U}}^{\wedge}_{k}$.
Thus ${\mathcal{U}}^{\wedge}_{k}={^{\perp_{k+1}}}\mathcal{V}$.
\end{proof}

\begin{corollary}\label{cor-2}
Let
$(\mathcal{U},\mathcal{V})$ be a left $n$-cotorsion pair in $\mathscr{C}$.
If $\mathcal{U}={^{\perp_{1}}\mathcal{V}}$, then for any $0\leq k \leq n-1$,
the following statements are equivalent.
\begin{itemize}
\item[(1)] $\mathcal{U}\text{-}\res \mathscr{C}\leq k$.
\item[(2)] $\mathscr{C}={^{\perp_{k+1}}\mathcal{V}}$.
\end{itemize}
\end{corollary}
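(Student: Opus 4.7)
The plan is to deduce this corollary as an essentially immediate consequence of Proposition \ref{prop-7}, which under the same hypotheses identifies the two subcategories $\mathcal{U}^{\wedge}_{k}$ and ${^{\perp_{k+1}}\mathcal{V}}$ for every $0\leq k\leq n-1$.

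First I would observe that the condition $\mathcal{U}\text{-}\res\mathscr{C}\leq k$ is, by the very definition of $\mathcal{X}$-resolution dimension given in Section 2, equivalent to saying that every object of $\mathscr{C}$ belongs to $\mathcal{U}^{\wedge}_{k}$, i.e.\ $\mathscr{C}\subseteq \mathcal{U}^{\wedge}_{k}$ (and hence $\mathscr{C}=\mathcal{U}^{\wedge}_{k}$ since $\mathcal{U}^{\wedge}_{k}\subseteq \mathscr{C}$ is automatic). Similarly, statement (2) is the equality $\mathscr{C}={^{\perp_{k+1}}\mathcal{V}}$.

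Then I would invoke Proposition \ref{prop-7}: since $(\mathcal{U},\mathcal{V})$ is a left $n$-cotorsion pair with $\mathcal{U}={^{\perp_{1}}\mathcal{V}}$, we have the equality of subcategories
\[
\mathcal{U}^{\wedge}_{k}={^{\perp_{k+1}}\mathcal{V}}
\]
for every $0\leq k\leq n-1$. Chaining these together gives
\[
\mathcal{U}\text{-}\res\mathscr{C}\leq k \iff \mathscr{C}=\mathcal{U}^{\wedge}_{k} \iff \mathscr{C}={^{\perp_{k+1}}\mathcal{V}},
\]
which is the desired equivalence.

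There is no real obstacle here; the only thing to be careful about is the translation between the dimension inequality $\mathcal{U}\text{-}\res\mathscr{C}\leq k$ and the subcategory equality $\mathscr{C}=\mathcal{U}^{\wedge}_{k}$, which follows directly from the definitions of $\mathcal{X}^{\wedge}_{k}$ and $\mathcal{X}\text{-}\res\mathscr{C}$ in Section 2. The hypothesis $\mathcal{U}={^{\perp_{1}}\mathcal{V}}$ is exactly what is needed to feed Proposition \ref{prop-7} into the argument.
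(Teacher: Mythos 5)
Your proposal is correct and is exactly the argument the paper intends: the corollary is stated immediately after Proposition \ref{prop-7} with no separate proof, precisely because under the hypothesis $\mathcal{U}={^{\perp_{1}}\mathcal{V}}$ one has $\mathcal{U}^{\wedge}_{k}={^{\perp_{k+1}}\mathcal{V}}$ and the rest is the routine translation $\mathcal{U}\text{-}\res\mathscr{C}\leq k \iff \mathscr{C}=\mathcal{U}^{\wedge}_{k}$. Nothing is missing.
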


By Propositions \ref{prop-6} and \ref{prop-7}, we have
the following result.

\begin{corollary}
Let $(\mathcal{U},\mathcal{V})$ be a cotorsion pair with $\mathcal{U}$  a resolving subcategory. Then
for any nonnegative integers $m$ and $n$, we have
$\mathcal{U}^{\perp_{m+1}}={\mathcal{V}}^{\vee}_{m}$ and
${^{\perp_{n+1}}\mathcal{V}}={\mathcal{U}}^{\wedge}_{n}$.
\end{corollary}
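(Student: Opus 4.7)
The plan is to reduce this corollary to Propositions \ref{prop-6} and \ref{prop-7} together with the dual of Proposition \ref{prop-7}. The key observation is that a cotorsion pair with $\mathcal{U}$ resolving is simultaneously an $N$-cotorsion pair for \emph{every} $N\geq 1$, so the formula $\mathcal{U}^{\wedge}_{k} = {}^{\perp_{k+1}}\mathcal{V}$ of Proposition \ref{prop-7} becomes available for every admissible $k$.

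First, I would invoke Proposition \ref{prop-6}: the hypothesis ``$(\mathcal{U},\mathcal{V})$ is a cotorsion pair with $\mathcal{U}$ resolving'' is condition (4) there, and is equivalent to condition (1) for each $N\geq 1$. Consequently $(\mathcal{U},\mathcal{V})$ is an $N$-cotorsion pair in $\mathscr{C}$ for every $N\geq 1$, $\mathcal{V}$ is coresolving (condition (5)), and moreover $\xi xt^{i\geq 1}_{\xi}(\mathcal{U},\mathcal{V})=0$. Next, since $(\mathcal{U},\mathcal{V})$ is in particular a left cotorsion pair, Remark \ref{remark-contra}(1) gives $\mathcal{U} = {}^{\perp_{1}}\mathcal{V}$, and dually $\mathcal{V} = \mathcal{U}^{\perp_{1}}$.

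Now fix $n\geq 0$ and view $(\mathcal{U},\mathcal{V})$ as a left $(n+1)$-cotorsion pair. Since $\mathcal{U} = {}^{\perp_{1}}\mathcal{V}$, Proposition \ref{prop-7} applied with $k=n$ yields $\mathcal{U}^{\wedge}_{n} = {}^{\perp_{n+1}}\mathcal{V}$. Running the same argument using the right $(m+1)$-cotorsion pair structure together with $\mathcal{V} = \mathcal{U}^{\perp_{1}}$, the dual of Proposition \ref{prop-7} applied with $k=m$ gives $\mathcal{V}^{\vee}_{m} = \mathcal{U}^{\perp_{m+1}}$.

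No serious obstacle is expected; the argument is a bookkeeping step that chains the two cited propositions. The only point requiring care is verifying the hypotheses of Proposition \ref{prop-7}, namely that $(\mathcal{U},\mathcal{V})$ is a left $n$-cotorsion pair and that $\mathcal{U} = {}^{\perp_{1}}\mathcal{V}$; both are delivered directly by Proposition \ref{prop-6} and Remark \ref{remark-contra}(1), respectively, and the freedom to let $n$ be arbitrarily large is precisely what allows the equality to hold for every $n\geq 0$.
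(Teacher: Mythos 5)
Your proof is correct and follows exactly the route the paper intends: the corollary is stated there as an application of Proposition \ref{prop-6} together with Proposition \ref{prop-7} and its dual, and your observation that condition (4) of Proposition \ref{prop-6} makes $(\mathcal{U},\mathcal{V})$ an $N$-cotorsion pair for every $N\geq 1$ is precisely the point that lets Proposition \ref{prop-7} apply at every level $k$. Nothing further is needed.
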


\section{Left Frobenius pairs and weak Auslander-Buchweitz contexts}

In this section, we will introduce the notions of left Frobenius pairs and left (weak) Auslander-Buchweitz contexts and give a one-one correspondence between left Frobenius pairs and left (weak) Auslander-Buchweitz contexts.

We begin with some relations on special subcategories.

\begin{proposition}\label{prop-injective-1}
Let $\mathcal{X}$ and $\omega$ be  subcategories of $\mathscr{C}$. Assume that
 $\omega$ is $\mathcal{X}$-injective.
\begin{itemize}
\item[(1)] $\omega^{\wedge}\subseteq\mathcal{X}{^\perp}$.
\item[(2)] Assume that $\omega$ is a $\xi$-cogenerator for $\mathcal{X}$ and
$\omega$ is closed under direct summands in $\mathscr{C}$. Then
$$\omega=\mathcal{X}\cap\omega^{\wedge}=\mathcal{X}\cap\mathcal{X}^{\perp}.$$
\end{itemize}
\end{proposition}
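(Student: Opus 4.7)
For part (1), the plan is dimension shifting. Let $X \in \mathcal{X}$ and $Y \in \omega^\wedge$ with $\omega$-$\dim Y = n$; I will show $\xi\mathrm{xt}^{i\ge 1}_\xi(X,Y)=0$ by induction on $n$. The base case $n=0$ is immediate from the hypothesis $\omega \subseteq \mathcal{X}^\perp$. For $n \ge 1$, pick an $\mathbb{E}$-triangle $K \to W_0 \to Y \dashrightarrow$ in $\xi$ with $W_0 \in \omega$ and $\omega$-$\dim K \le n-1$, apply $\Hom_\mathscr{C}(X,-)$ and invoke Remark \ref{remark-long} to obtain a long exact sequence relating $\xi\mathrm{xt}^i_\xi(X,Y)$ to $\xi\mathrm{xt}^i_\xi(X,W_0)$ and $\xi\mathrm{xt}^{i+1}_\xi(X,K)$. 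Both flanking groups vanish (the former by $\mathcal{X}$-injectivity of $\omega$, the latter by induction), so the middle vanishes as well. Thus $X \in {}^\perp(\omega^\wedge)$.

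For part (2), the chain of inclusions $\omega \subseteq \mathcal{X} \cap \omega^\wedge \subseteq \mathcal{X} \cap \mathcal{X}^\perp$ is clear: $\omega \subseteq \mathcal{X}$ by hypothesis, $\omega \subseteq \omega^\wedge$ trivially, and $\mathcal{X} \cap \omega^\wedge \subseteq \mathcal{X} \cap \mathcal{X}^\perp$ follows at once from part (1). So it remains to prove $\mathcal{X} \cap \mathcal{X}^\perp \subseteq \omega$, which closes the loop.

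To this end, take $Y \in \mathcal{X} \cap \mathcal{X}^\perp$. Since $\omega$ is a $\xi$-cogenerator of $\mathcal{X}$, there is an $\mathbb{E}$-triangle
$$\xymatrix@C=0.5cm{Y \ar[r] & W \ar[r] & X' \ar@{-->}[r] &}$$
in $\xi$ with $W \in \omega$ and $X' \in \mathcal{X}$. Applying $\Hom_\mathscr{C}(X',-)$ and Remark \ref{fact:2.5'}(2), the splitness of this triangle is controlled by $\xi\mathrm{xt}^1_\xi(X',Y)$, which vanishes because $X' \in \mathcal{X}$ and $Y \in \mathcal{X}^\perp$. Hence the triangle splits, realizing $Y$ as a direct summand of $W \in \omega$; the closure of $\omega$ under direct summands gives $Y \in \omega$, as required.

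The argument is essentially formal once part (1) is in hand. The main (modest) subtlety is ensuring that Remark \ref{fact:2.5'}(2) legitimately identifies $\xi\mathrm{xt}^1_\xi(X',Y)=0$ with the splitness of the $\mathbb{E}$-triangle in $\xi$, so that closure of $\omega$ under direct summands finishes the job; no further hypothesis is needed beyond those explicitly stated.
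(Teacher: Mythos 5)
Your proposal is correct and follows essentially the same route as the paper: part (1) is the same dimension-shifting argument on the $\omega$-resolution (you organize it as a downward induction on $\omega$-$\dim Y$ while the paper peels off syzygies from the top of the resolution, but the content is identical), and part (2) is the identical splitting argument using the $\xi$-cogenerator triangle, the vanishing of $\xi\mathrm{xt}^1_\xi(X',Y)$, and closure of $\omega$ under direct summands.
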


\begin{proof}
(1) Easily.

(2) By (1), $\omega\subseteq\mathcal{X}\cap\omega^{\wedge}\subseteq\mathcal{X}\cap\mathcal{X}^{\perp}$.
Next we prove that  $\mathcal{X}\cap\mathcal{X}^{\perp}\subseteq\omega$.
To do it, let $X\in \mathcal{X}\cap\mathcal{X}^{\perp}$.
By assumption,  $\omega$ is a $\xi$-cogenerator in $\mathcal{X}$. Then there is an $\mathbb{E}$-triangle
$\xymatrix@C=0.5cm{X\ar[r]&W\ar[r]&X'\ar@{-->}[r]&}$
in $\xi$ with $W\in \omega$ and $X'\in \mathcal{X}$.
Since $X\in \mathcal{X}^{\perp}$, we have $\xi xt_{\xi}^{1}(X',X)=0$, hence the above $\mathbb{E}$-triangle is split.
This shows that $X$ is a direct summand of $W$, and hence $X\in \omega$.
Thus $\mathcal{X}\cap\mathcal{X}^{\perp}\subseteq\omega$.
\end{proof}

The following result is the so-called Auslander-Buchweitz approximation.

\begin{proposition}\label{prop-injective-2}{\rm (c.f. \cite[Theorem 1.1]{AB89T}, \cite[Theorem 2.8]{BMP19F}, \cite[Theorem 3.7]{MDZ})}
Let $\mathcal{X}$ and $\omega$ be  subcategories of $\mathscr{C}$. Assume that
\begin{itemize}
  \item [(1)] $\mathcal{X}$ is closed under $\xi$-extensions,
  \item [(2)] $\omega$ is a $\xi$-cogenerator in $\mathcal{X}$.
\end{itemize}
Then for any $C\in \mathcal{X}^{\wedge}_n$, there exist the following $\mathbb{E}$-triangles
$$\xymatrix@C=0.5cm{Y_{C}\ar[r]&X_{C}\ar[r]&C\ar@{-->}[r]&},$$
$$\xymatrix@C=0.5cm{C\ar[r]&Y^{C}\ar[r]&X^{C}\ar@{-->}[r]&}$$
in $\xi$ with $Y_{C}\in \omega^{\wedge}_{n-1}$, $Y^{C}\in \omega^{\wedge}_{n}$ and $X_{C},\ X^{C}\in \mathcal{X}$.
In particular, if $\omega$ is $\mathcal{X}$-injective, then $X_{C}\ra C$ is a right $\mathcal{X}$-approximation of $C$.
\end{proposition}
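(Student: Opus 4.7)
The plan is to prove the proposition by induction on $n$, using Lemma~\ref{lem1}(2) to form pushouts and (ET4) to splice $\mathbb{E}$-triangles, while relying on the fact that $\xi$ is a proper class so every resulting $\mathbb{E}$-triangle stays in $\xi$.

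\textbf{Base case $n=0$.} Here $C\in\mathcal{X}$, so take $X_C=C$ and $Y_C=0$, with the split $\mathbb{E}$-triangle $0\to C\xrightarrow{\mathrm{Id}}C\dashrightarrow 0$ (which lies in $\Delta_0\subseteq\xi$). For the second approximation, the $\xi$-cogenerator assumption on $\omega$ directly yields an $\mathbb{E}$-triangle $C\to W\to X'\dashrightarrow$ in $\xi$ with $W\in\omega\subseteq\omega^\wedge_0$ and $X'\in\mathcal{X}$; set $Y^C=W$, $X^C=X'$.

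\textbf{Inductive step.} Assume the result for $n-1$ and let $C\in\mathcal{X}^\wedge_n$. Pick an $\mathbb{E}$-triangle
\[
\xymatrix@C=0.5cm{K\ar[r]&X_0\ar[r]&C\ar@{-->}[r]&}
\]
in $\xi$ with $X_0\in\mathcal{X}$ and $K\in\mathcal{X}^\wedge_{n-1}$. Apply the induction hypothesis to $K$ to obtain a second-type triangle $K\to Y^K\to X^K\dashrightarrow$ in $\xi$ with $Y^K\in\omega^\wedge_{n-1}$ and $X^K\in\mathcal{X}$. Now invoke Lemma~\ref{lem1}(2) on the two triangles sharing source $K$ to obtain a commutative diagram producing two new $\mathbb{E}$-triangles in $\xi$:
\[
\xymatrix@C=0.5cm{Y^K\ar[r]&N\ar[r]&C\ar@{-->}[r]&}\qquad\text{and}\qquad\xymatrix@C=0.5cm{X_0\ar[r]&N\ar[r]&X^K\ar@{-->}[r]&.}
\]
Since $X_0,X^K\in\mathcal{X}$ and $\mathcal{X}$ is closed under $\xi$-extensions, $N\in\mathcal{X}$. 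Thus we may set $Y_C:=Y^K\in\omega^\wedge_{n-1}$ and $X_C:=N\in\mathcal{X}$, giving the first approximation triangle.

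\textbf{Second approximation.} Apply the $\xi$-cogenerator property to $X_C\in\mathcal{X}$ to produce an $\mathbb{E}$-triangle $X_C\to W\to X'\dashrightarrow$ in $\xi$ with $W\in\omega$ and $X'\in\mathcal{X}$. Feed this and the first approximation triangle $Y_C\to X_C\to C\dashrightarrow$ into (ET4): this yields an object $E$ and $\mathbb{E}$-triangles (in $\xi$, since $\xi$ is closed under cobase change and saturated)
\[
\xymatrix@C=0.5cm{Y_C\ar[r]&W\ar[r]&E\ar@{-->}[r]&}\qquad\text{and}\qquad\xymatrix@C=0.5cm{C\ar[r]&E\ar[r]&X'\ar@{-->}[r]&.}
\]
Splicing the first of these with any $\omega$-resolution of $Y_C$ of length $n-1$ shows $E\in\omega^\wedge_n$. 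Hence $Y^C:=E$ and $X^C:=X'$ satisfy the requirements.

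\textbf{The approximation claim.} Assume $\omega\subseteq\mathcal{X}^\perp$. For any $\widetilde{X}\in\mathcal{X}$, apply $\Hom_\mathscr{C}(\widetilde{X},-)$ to $Y_C\to X_C\to C\dashrightarrow$ and use the long exact sequence of Remark~\ref{remark-long}(2); surjectivity of $\Hom_\mathscr{C}(\widetilde{X},X_C)\to\Hom_\mathscr{C}(\widetilde{X},C)$ is reduced to $\xi{\rm xt}^1_\xi(\widetilde{X},Y_C)=0$. Since $Y_C\in\omega^\wedge_{n-1}$ and $\omega\subseteq\mathcal{X}^\perp$, this vanishing follows by a dimension shifting argument along any $\omega$-resolution of $Y_C$ (essentially the dual of Proposition~\ref{prop-injective-1}(1)).

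\textbf{Main obstacle.} The technical core is verifying that each $\mathbb{E}$-triangle arising from the pushout of Lemma~\ref{lem1}(2) and from (ET4) genuinely lies in the proper class $\xi$; this is where closure under base/cobase change and saturation of $\xi$ are indispensable. The bookkeeping ensuring $N\in\mathcal{X}$ and $E\in\omega^\wedge_n$ at each inductive stage is the other place where care is required, but both follow cleanly once the $\mathbb{E}$-triangles are in $\xi$.
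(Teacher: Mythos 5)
Your proof is correct and follows essentially the same route as the paper, which simply defers to the standard Auslander--Buchweitz induction of \cite[Theorem 3.7]{MDZ}: induct on $n$, push out along the cogenerator triangle for the syzygy via Lemma \ref{lem1}(2), and obtain the coapproximation triangle from (ET4), with closure of $\xi$ under cobase change plus saturation guaranteeing membership in $\xi$. Your write-up in fact supplies the details the paper omits, including the correct justification for why the (ET4) output triangles stay in $\xi$.
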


\begin{proof}
The proof is similar to that of \cite[Theorem 3.7]{MDZ}.
\end{proof}

\begin{corollary}\label{cor-4.5}
Let $\mathcal{X}$ and $\omega$ be  subcategories of $\mathscr{C}$. Assume that
\begin{itemize}
  \item [(1)] $\mathcal{X}$
is closed under $\xi$-extensions,
  \item [(2)] $\omega$ is closed under direct summands in $\mathscr{C}$,
  \item [(3)] $\omega$ is $\mathcal{X}$-injective and a $\xi$-cogenerator for $\mathcal{X}$.
\end{itemize}
Then
$$\omega^{\wedge}=\mathcal{X}^{\perp}\cap{\mathcal{X}{^\wedge}}.$$
\end{corollary}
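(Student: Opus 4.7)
The plan is to prove the two inclusions separately; only one direction requires work. The inclusion $\omega^{\wedge}\subseteq\mathcal{X}^{\perp}\cap\mathcal{X}^{\wedge}$ is immediate: $\omega\subseteq\mathcal{X}$ gives $\omega^{\wedge}\subseteq\mathcal{X}^{\wedge}$, while $\omega^{\wedge}\subseteq\mathcal{X}^{\perp}$ is exactly Proposition \ref{prop-injective-1}(1).

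For the reverse inclusion, fix $C\in\mathcal{X}^{\perp}\cap\mathcal{X}^{\wedge}$ and set $n:=\mathcal{X}\text{-}\res C$. By Proposition \ref{prop-injective-2} (the Auslander--Buchweitz approximation), there is an $\mathbb{E}$-triangle
$$\xymatrix@C=0.5cm{Y_{C}\ar[r]&X_{C}\ar[r]&C\ar@{-->}[r]&}$$
in $\xi$ with $Y_{C}\in\omega^{\wedge}_{n-1}$ and $X_{C}\in\mathcal{X}$. The crucial step is to show $X_{C}\in\omega$. Pick any $X'\in\mathcal{X}$ and apply $\xi xt^{\ast}_{\xi}(X',-)$ to the above triangle; Remark \ref{remark-long} yields the long exact sequence
$$\cdots\to\xi xt^{i}_{\xi}(X',Y_{C})\to\xi xt^{i}_{\xi}(X',X_{C})\to\xi xt^{i}_{\xi}(X',C)\to\cdots.$$
Both outer terms vanish for $i\geq 1$: the first because $Y_{C}\in\omega^{\wedge}\subseteq\mathcal{X}^{\perp}$ by the already-established inclusion, the second by $C\in\mathcal{X}^{\perp}$. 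Hence $X_{C}\in\mathcal{X}\cap\mathcal{X}^{\perp}=\omega$ by Proposition \ref{prop-injective-1}(2).

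To finish, I splice: by definition of $\omega^{\wedge}_{n-1}$ there is a $\xi$-exact complex $W_{n-1}\to\cdots\to W_{0}\to Y_{C}$ with every $W_{i}\in\omega$, and composing $W_{0}\to Y_{C}$ with $Y_{C}\to X_{C}$ produces
$$\xymatrix@C=0.5cm{W_{n-1}\ar[r]&\cdots\ar[r]&W_{0}\ar[r]&X_{C}\ar[r]&C}$$
all of whose terms other than $C$ lie in $\omega$, witnessing $C\in\omega^{\wedge}_{n}\subseteq\omega^{\wedge}$.

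The main obstacle is verifying that this spliced diagram is genuinely $\xi$-exact in the sense required by the definition of $\xi$-exact complex: the $\mathbb{E}$-triangle $Y_{C}\to X_{C}\to C$ identifies $Y_{C}$ as the first $\xi$-syzygy of $C$, and the $\mathbb{E}$-triangles supplied by the $\omega$-resolution of $Y_{C}$ provide the remaining syzygies. The required factorization of $W_{0}\to X_{C}$ through $Y_{C}$ is built into the construction, so no new ingredient beyond Propositions \ref{prop-injective-1} and \ref{prop-injective-2} is needed.
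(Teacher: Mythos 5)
Your proof is correct and follows essentially the same route as the paper's: the easy inclusion via Proposition \ref{prop-injective-1}(1), then for the converse the Auslander--Buchweitz triangle from Proposition \ref{prop-injective-2}, the long exact sequence forcing $X_{C}\in\mathcal{X}^{\perp}$ so that $X_{C}\in\mathcal{X}\cap\mathcal{X}^{\perp}=\omega$ by Proposition \ref{prop-injective-1}(2), and the splicing to conclude $C\in\omega^{\wedge}$. You merely spell out the two steps the paper leaves implicit (the vanishing argument and the $\xi$-exactness of the spliced complex), both of which check out.
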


\begin{proof}
By Proposition \ref{prop-injective-1}(1), $\omega^{\wedge}\subseteq\mathcal{X}^{\perp}$.
Moreover, $\omega^{\wedge}\subseteq\mathcal{X}^{\wedge}$.
Thus $\omega^{\wedge}\subseteq\mathcal{X}^{\perp}\cap{\mathcal{X}{^\wedge}}$.

Now let $C\in\mathcal{X}^{\perp}\cap{\mathcal{X}^{\wedge}}$.
By Proposition \ref{prop-injective-2}, there is an $\mathbb{E}$-triangle
$\xymatrix@C=0.5cm{Y\ar[r]&X\ar[r]&C\ar@{-->}[r]&}$
in $\xi$ with $X\in \mathcal{X}$ and $Y\in \omega^{\wedge}\subseteq\mathcal{X}^{\perp}$.
Since $C\in \mathcal{X}^{\perp}$, one has $X\in \mathcal{X}^{\perp}$, and hecne $X\in \mathcal{X}\cap \mathcal{X}^{\perp}$.
By Proposition \ref{prop-injective-1}(2), $X\in\omega$.
So $C\in \omega^{\wedge}$, and thus
$\mathcal{X}^{\perp}\cap{\mathcal{X}{^\wedge}}\subseteq\omega^{\wedge}$. Therefore, $\omega^{\wedge}=\mathcal{X}^{\perp}\cap{\mathcal{X}{^\wedge}}.$
\end{proof}

\begin{lemma}\label{thm-2}
Let $\omega$ be a subcategory of $\mathscr{C}$. Assume that
$\omega$ is closed under direct summands.
Then
\begin{itemize}
\item[(1)] $^{\perp}\omega={^{\perp}}(\omega^{\wedge})$.
\item[(2)] $\omega^{\wedge}\cap{^{\perp}}\omega\subseteq\omega$.
\end{itemize}
\end{lemma}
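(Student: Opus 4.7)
The plan is to prove (1) first by induction on $\omega$-resolution dimension, and then deduce (2) as a quick splitting argument using part (1) together with the assumption that $\omega$ is closed under direct summands.

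For (1), the inclusion ${^{\perp}}(\omega^{\wedge}) \subseteq {^{\perp}}\omega$ is immediate from $\omega \subseteq \omega^{\wedge}$. For the reverse inclusion, I would fix $M \in {^{\perp}}\omega$ and $W \in \omega^{\wedge}$ and show $\xi xt^{i \geq 1}_{\xi}(M, W) = 0$ by induction on $n = \omega\text{-}\dim W$. The case $n = 0$ is just the hypothesis $M \in {^{\perp}}\omega$. For $n \geq 1$, a shortened $\omega$-resolution gives an $\mathbb{E}$-triangle
$$\xymatrix@C=0.5cm{K \ar[r] & W_0 \ar[r] & W \ar@{-->}[r] &}$$
in $\xi$ with $W_0 \in \omega$ and $K \in \omega^{\wedge}_{n-1}$. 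Applying the long exact sequence of Remark \ref{remark-long}(2) to $\Hom_{\mathscr{C}}(M, -)$ yields
$$\xymatrix@C=0.4cm{\xi xt^{i}_{\xi}(M, W_0) \ar[r] & \xi xt^{i}_{\xi}(M, W) \ar[r] & \xi xt^{i+1}_{\xi}(M, K).}$$
The left term vanishes because $W_0 \in \omega$ and $M \in {^{\perp}}\omega$, and the right term vanishes by the induction hypothesis applied to $K \in \omega^{\wedge}_{n-1}$. Hence $\xi xt^{i}_{\xi}(M, W) = 0$ for all $i \geq 1$.

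For (2), let $M \in \omega^{\wedge} \cap {^{\perp}}\omega$. Since $M \in \omega^{\wedge}$, there is an $\mathbb{E}$-triangle
$$\xymatrix@C=0.5cm{K \ar[r] & W_0 \ar[r] & M \ar@{-->}[r]^{\delta} &}$$
in $\xi$ with $W_0 \in \omega$ and $K \in \omega^{\wedge}$. By part (1), $M \in {^{\perp}}\omega = {^{\perp}}(\omega^{\wedge})$, so in particular $\xi xt^{1}_{\xi}(M, K) = 0$. Via the bijection in Remark \ref{fact:2.5'}(2), this forces $\delta = 0$, i.e. the $\mathbb{E}$-triangle is split, so $W_0 \cong K \oplus M$. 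Since $\omega$ is closed under direct summands and $W_0 \in \omega$, we conclude $M \in \omega$.

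I expect the main obstacle to be nothing serious; the only slightly delicate point is keeping the indexing straight in the long exact sequence during the induction (making sure the vanishing hypothesis on $K$ covers both degrees $i$ and $i+1$ one needs), but this is handled by the inductive hypothesis giving vanishing in \emph{all} degrees $\geq 1$ simultaneously. The identification of split $\mathbb{E}$-triangles via $\xi xt^{1}_{\xi}$ in (2) is supplied directly by Remark \ref{fact:2.5'}(2), so no extra argument is needed.
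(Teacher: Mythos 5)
Your proposal is correct and follows essentially the same route as the paper: part (1) is the same dimension-shifting argument along a finite $\omega$-resolution (you organize it as induction on $\omega\text{-}\dim$, the paper iterates from the top of the resolution downward), and part (2) is the same splitting argument using closure under direct summands. The only cosmetic difference is that in (2) you invoke part (1) to get $\xi xt^{1}_{\xi}(M,K)=0$ directly, whereas the paper re-derives that vanishing by walking down the resolution; your shortcut is valid and slightly cleaner.
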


\begin{proof}
(1) Clearly, ${^{\perp}}(\omega^{\wedge})\subseteq{^{\perp}\omega}$.

Conversely, for any $X\in{^{\bot}}\omega,~\xi xt^{i\geq1}(X,\omega)=0$. For any $Y\in \omega^{\wedge}$, assume $\omega\text{-}\res Y=m<\infty$. Then there is a $\xi$-exact complex
$$\xymatrix@=0.5cm{W_{m}\ar[r]&W_{m-1}\ar[r]&\cdots\ar[r]&W_{1}\ar[r]&W_{0}\ar[r]&Y}$$ with each $W_{i}\in \omega,~0\leq i\leq m$. Applying the functor $\Hom_{\mathscr{C}}(X,-)$ to the $\mathbb{E}$-triangle $$\xymatrix@=0.5cm{W_{m}\ar[r]&W_{m-1}\ar[r]&K_{m-1}\ar@{-->}[r]&},$$
we can get the following exact sequence
$$\xymatrix@=0.5cm{\cdots\ar[r]&\xi xt^{i}_{\xi}(X,W_{m})\ar[r]&\xi xt^{i}_{\xi}(X,W_{m-1})\ar[r]&\xi xt^{i}_{\xi}(X,K_{m-1})\ar[r]&\xi xt^{i+1}_{\xi}(X,W_{m})\ar[r]&\cdots}.$$
 Since $\xi xt^{i}_{\xi}(X,W_{m-1})=\xi xt^{i+1}_{\xi}(X,W_{m})=0$ for any $i\geq1$, then $\xi xt^{i}_{\xi}(X,K_{m-1})=0$ for any $i\geq1$.
Let's take just $Y$ to be $K_{0}$. Repeat this process, we have $\xi xt^{i\geq1}_{\xi}(X,K_{j})=0,~0\leq j\leq m-1$, where $\mathbb{E}$-triangles $\xymatrix@=0.5cm{K_{k+1}\ar[r]&W_{k}\ar[r]&K_{k}\ar@{-->}[r]&}$ are in $\xi$, $~1\leq k\leq m-2$. Thus $\xi xt^{i\geq1}_{\xi}(X,Y)=0$, and so
${^{\perp}}(\omega^{\wedge})\supseteq{^{\perp}\omega}$

(2) Let $Z\in \omega^{\wedge}\cap{^{\bot}}\omega$. Then there is a $\xi$-exact complex $$\xymatrix@=0.5cm{W_{n}\ar[r]&W_{n-1}\ar[r]&\cdots \ar[r]&W_{1}\ar[r]&W_{0}\ar[r]&Z}$$ with $W_{i}\in\omega,~0\leq i\leq n$. Applying the functor ${\rm Hom}_{\mathscr{C}}(Z,-)$ to the $\mathbb{E}$-triangle $$\xymatrix@=0.5cm{W_{n}\ar[r]&W_{n-1}\ar[r]&K_{n-1}\ar@{-->}[r]&}$$
we can get the following exact sequence
$$\xymatrix@=0.5cm{\cdots\ar[r]&\xi xt^{1}_{\xi}(Z,W_{n-1})
\ar[r]&\xi xt^{1}_{\xi}(Z,K_{n-1})\ar[r]&\xi xt^{2}(Z,W_{n})}.$$
Then $\xi xt^{1}_{\xi}(Z,K_{n-1})=0$ since $\xi xt^{1}_{\xi}(Z,W_{n-1})=\xi xt^{2}_{\xi}(Z,W_{n})=0$. For the same reason, we can obtain that $\xi xt^{1}_{\xi}(Z,K_{i})=0,~1\leq i\leq n-1$. Then $\xi xt^{1}_{\xi}(Z,K_{1})=0$, that is, the $\mathbb{E}$-triangle $\xymatrix@=0.5cm{K_{1}\ar[r]&W_{0}\ar[r]&Z\ar@{-->}[r]&}$ is split. Thus $Z\oplus K_{1}\cong W_{0}\in\omega$. It follows that $Z\in\omega$ from the fact that $\omega$ is closed under direct summands.
\end{proof}

\begin{proposition}\label{theorem-2}
Let $\mathcal{X}$ and $\mathcal{Y}$ be subcategories of $\mathscr{C}$.
Assume that
\begin{itemize}
\item[(a)] $\mathcal{X}$ and $\mathcal{Y}$ are closed under direct summands and
$\mathcal{Y}\subseteq\mathcal{X}^{\wedge}$,
\item[(b)] $\mathcal{X}$ is closed under $\xi$-extensions and cocones of $\xi$-deflations, and
\item[(c)] $\mathcal{Y}$ is closed under $\xi$-extensions and cones of $\xi$-inflations.
\end{itemize}

If $\omega:=\mathcal{X}\cap \mathcal{Y}$ is  $\mathcal{X}$-injective and a $\xi$-cogenerator for $\mathcal{X}$, then
\begin{itemize}
  \item [(1)] $\mathcal{Y}=\omega^{\wedge}=\mathcal{X}^{\wedge}\cap\mathcal{X}^{\perp}=\mathcal{X}^{\wedge}
\cap\mathcal{X}^{\perp_{1}},$
  \item [(2)] $\mathcal{X}={^{\bot}}\omega\cap\mathcal{X}^{\wedge}={^{\bot}}(\omega^{\wedge})\cap\mathcal{X}^{\wedge}$.
\end{itemize}
\end{proposition}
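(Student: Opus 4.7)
Two chains of equalities are to be established. My plan is to treat them separately, with the Auslander--Buchweitz approximation $\mathbb{E}$-triangles of Proposition \ref{prop-injective-2} providing the main tool in both. The key auxiliary observation is that vanishing of $\xi xt_{\xi}^{1}$ forces the corresponding $\mathbb{E}$-triangle to split (via Remark \ref{fact:2.5'}(2)), which is the mechanism used repeatedly to deposit objects into $\omega$ or $\mathcal{X}$.

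First I establish $\mathcal{Y}=\omega^{\wedge}$. The inclusion $\omega^{\wedge}\subseteq\mathcal{Y}$ follows by induction on the length of an $\omega$-resolution: since $\omega\subseteq\mathcal{Y}$ and $\mathcal{Y}$ is closed under cones of $\xi$-inflations, walking down through the syzygies of any $\omega$-resolution of $W\in\omega^{\wedge}$ keeps every partial cokernel in $\mathcal{Y}$, so $W\in\mathcal{Y}$. For the reverse inclusion, fix $Y\in\mathcal{Y}\subseteq\mathcal{X}^{\wedge}$ with $\mathcal{X}\text{-}\res Y=n$; the case $n=0$ is immediate from $Y\in\mathcal{Y}\cap\mathcal{X}=\omega$, so assume $n\geq 1$. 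Proposition \ref{prop-injective-2} supplies an $\mathbb{E}$-triangle $Y_{Y}\to X_{Y}\to Y$ in $\xi$ with $X_{Y}\in\mathcal{X}$ and $Y_{Y}\in\omega^{\wedge}_{n-1}$; by the previous step $Y_{Y}\in\mathcal{Y}$, and since $\mathcal{Y}$ is closed under $\xi$-extensions we get $X_{Y}\in\mathcal{Y}\cap\mathcal{X}=\omega$. Splicing the $\omega$-resolution of $Y_{Y}$ through this triangle then exhibits $Y\in\omega^{\wedge}_{n}$.

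For the first chain of equalities, Corollary \ref{cor-4.5} directly gives $\omega^{\wedge}=\mathcal{X}^{\wedge}\cap\mathcal{X}^{\perp}$, and $\mathcal{X}^{\wedge}\cap\mathcal{X}^{\perp}\subseteq\mathcal{X}^{\wedge}\cap\mathcal{X}^{\perp_{1}}$ is trivial. The remaining inclusion $\mathcal{X}^{\wedge}\cap\mathcal{X}^{\perp_{1}}\subseteq\omega^{\wedge}$ runs as follows: given $C$ in the left-hand side, Proposition \ref{prop-injective-2} yields $Y_{C}\to X_{C}\to C$ in $\xi$ with $X_{C}\in\mathcal{X}$ and $Y_{C}\in\omega^{\wedge}\subseteq\mathcal{X}^{\perp}$, where the last inclusion is Proposition \ref{prop-injective-1}(1). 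Applying $\Hom_{\mathscr{C}}(X,-)$ for $X\in\mathcal{X}$ and reading off degree $1$ of the long exact sequence forces $\xi xt_{\xi}^{1}(X,X_{C})=0$, so $X_{C}\in\mathcal{X}\cap\mathcal{X}^{\perp_{1}}$. Apply the $\xi$-cogenerator axiom to $X_{C}$ to obtain an $\mathbb{E}$-triangle $X_{C}\to W\to X'$ in $\xi$ with $W\in\omega$ and $X'\in\mathcal{X}$; its class lies in $\xi xt_{\xi}^{1}(X',X_{C})=0$, so it splits, and $X_{C}$ is a direct summand of $W\in\omega$. Since $\omega$ is closed under summands, $X_{C}\in\omega$; feeding this back into $Y_{C}\to X_{C}\to C$ with $Y_{C}\in\omega^{\wedge}$ shows $C\in\omega^{\wedge}$.

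For the second chain, Lemma \ref{thm-2}(1) gives ${^{\perp}\omega}={^{\perp}(\omega^{\wedge})}$, and Proposition \ref{prop-injective-1}(1) gives $\mathcal{X}\subseteq{^{\perp}(\omega^{\wedge})}\cap\mathcal{X}^{\wedge}$. For the reverse inclusion, let $C\in{^{\perp}\omega}\cap\mathcal{X}^{\wedge}={^{\perp}(\omega^{\wedge})}\cap\mathcal{X}^{\wedge}$ and invoke Proposition \ref{prop-injective-2} once more for a triangle $Y_{C}\to X_{C}\to C$ in $\xi$ with $X_{C}\in\mathcal{X}$ and $Y_{C}\in\omega^{\wedge}$. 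Applying $\Hom_{\mathscr{C}}(C,-)$, the vanishing $\xi xt_{\xi}^{1}(C,Y_{C})=0$ permits $\mathrm{Id}_{C}$ to be lifted along $X_{C}\to C$, splitting the triangle; hence $C$ is a direct summand of $X_{C}\in\mathcal{X}$, and $C\in\mathcal{X}$ because $\mathcal{X}$ is closed under summands. The main obstacle I anticipate is the upgrade from $\mathcal{X}^{\perp_{1}}$ to $\mathcal{X}^{\perp}$ inside $\mathcal{X}^{\wedge}$ in the previous paragraph, since it is the only place where promoting single-degree Ext vanishing to all-degree vanishing is not automatic; everything else is bookkeeping against the stated closure properties.
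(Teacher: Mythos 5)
Your proposal is correct and follows essentially the same route as the paper: both rest on Corollary \ref{cor-4.5}, Lemma \ref{thm-2} and the Auslander--Buchweitz triangles of Proposition \ref{prop-injective-2}, combined with splitting arguments driven by the vanishing of $\xi xt_{\xi}^{1}$. The only local difference is in proving $\mathcal{X}^{\wedge}\cap\mathcal{X}^{\perp_{1}}\subseteq\omega^{\wedge}$, where the paper splits the coapproximation triangle $Z\to W\to X$ directly (using that $\omega^{\wedge}=\mathcal{Y}$ is closed under direct summands), while you instead show $X_{C}\in\omega$ via the approximation triangle and the $\xi$-cogenerator axiom; both work.
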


\begin{proof}
(1)  Since $\mathcal{Y}$ is closed under cones of $\xi$-inflations, we have $\mathcal{Y}^{\wedge}=\mathcal{Y}$.
It follows that $\omega^{\wedge}\subseteq\mathcal{Y}$ since $\omega\subseteq\mathcal{Y}$.
Now let $Y\in\mathcal{Y}$. Since
$\mathcal{Y}\subseteq\mathcal{X}^{\wedge}$ by assumption,  there is an $\mathbb{E}$-triangle
$\xymatrix@C=0.5cm{K\ar[r]&X\ar[r]&Y\ar@{-->}[r]&}$
in $\xi$ with $X\in \mathcal{X}$ and $K\in \omega^{\wedge}\subseteq \mathcal{Y}$ by Proposition \ref{prop-injective-2}.
Since $\mathcal{Y}$ is closed under $\xi$-extensions, we have $X\in \mathcal{Y}$.
So $X\in \mathcal{X}\cap \mathcal{Y}=\omega$, and hence $Y\in \omega^{\wedge}$ and $\mathcal{Y}\subseteq\omega^{\wedge}$.
Thus $\mathcal{Y}=\omega^{\wedge}$.

By Corollary \ref{cor-4.5}, we have $\omega^{\wedge}=\mathcal{X}^{\perp}\cap{\mathcal{X}{^\wedge}}$.

Clearly, $\mathcal{X}^{\wedge}\cap\mathcal{X}^{\perp}\subseteq\mathcal{X}^{\wedge}\cap\mathcal{X}^{\perp_{1}}$.
Now let $Z\in\mathcal{X}^{\wedge}\cap\mathcal{X}^{\perp_{1}}$.
By Proposition \ref{prop-injective-2}, there is an $\mathbb{E}$-triangle
$\xymatrix@C=0.5cm{Z\ar[r]&W\ar[r]&X\ar@{-->}[r]&}$
in $\xi$ with $X\in \mathcal{X}$ and $W\in \omega^{\wedge}$.
Since $Z\in\mathcal{X}^{\perp_{1}}$, the above $\mathbb{E}$-triangle is split.
So $Z$ is a direct summand of $W$.
Notice that $\omega^{\wedge}(=\mathcal{Y}$) is closed under direct summands,
we have $Z\in \omega^{\wedge}=\mathcal{X}^{\wedge}\cap\mathcal{X}^{\perp}$.
Thus  $\mathcal{X}^{\wedge}\cap\mathcal{X}^{\perp}=\mathcal{X}^{\wedge}
\cap\mathcal{X}^{\perp_{1}}$.

(2) Clearly, $\mathcal{X}\subseteq {^{\bot}}\omega \cap \mathcal{X}^{\wedge}$. 
 By Lemma \ref{thm-2}(1), ${^{\bot}}\omega\cap\mathcal{X}^{\wedge}={^{\bot}}(\omega^{\wedge})\cap\mathcal{X}^{\wedge}$.
 It suffices to show that ${^{\bot}}\omega\cap\mathcal{X}^{\wedge}\subseteq\mathcal{X}$.

Now, let $M\in {^{\perp}\omega}\cap\mathcal{X}^{\wedge}$.
By Proposition \ref{prop-injective-2}, there is a $\mathbb{E}$-triangle
$\xymatrix@C=0.5cm{K\ar[r]&X\ar[r]&M\ar@{-->}[r]&}$ in $\xi$ with $X\in\mathcal{X}\subseteq{^{\bot}}\omega$ and $K\in \omega^{\wedge}$.
Then $K\in{^\perp\omega}$, and so $K\in \omega^{\wedge}\cap {^{\perp}\omega}\subseteq\omega$
by Lemma \ref{thm-2}.
Notice that
$\xi xt_{\mathcal{\xi}}^{1}(M,K)=0$, that is, the above $\mathbb{E}$-triangle is split, hence $X\cong K\oplus M$.
It follows that $M\in \mathcal{X}$ from the fact that
$\mathcal{X}$ is closed under direct summands.
Thus ${\mathcal{X}}^{\wedge}\cap {^{\perp}\omega}\subseteq\mathcal{X}$.
\end{proof}

Now we generalize the notion of left Frobenius pairs in abelian categories \cite{BMP19F} and in triangulated categories \cite{MZ} to  that in  extriangulated categories.

\begin{definition}
A pair of subcategories $(\mathcal{X},\omega)$ in $\mathscr{C}$ is called a \emph{left Frobenius pair} if
\begin{itemize}
\item[(LF1)] $\mathcal{X}$ and $\omega$ are closed under direct summands.
 \item[(LF2)]$\mathcal{X}$ is closed under $\xi$-extensions and cocones of $\xi$-deflations.
\item[(LF3)] $\omega$ is  $\mathcal{X}$-injective and a $\xi$-cogenerator of $\mathcal{X}$.
\end{itemize}
\end{definition}

We have the following example.

\begin{example}\label{exa-4.7}
\begin{itemize}
\item[]
\item[(1)]  $(\mathcal{GP}(\xi),\mathcal{P}(\xi))$ is a left Frobenius pair in $\mathscr{C}$ (see \cite[Lemma 4.16 and Theorem 4.17]{HZZ}).
\item[(2)] Given a left Frobenius pair $(\mathcal{X},\omega)$ in $\mathscr{C}$. If
$\mathcal{X}\text{-}\res \mathscr{C}=n<\infty$, then
$(\mathcal{X},\omega)$ is a left $n$-cotorsion pair in $\mathscr{C}$ by Proposition \ref{prop-injective-2}.
In particular, if $\sup\{\xi\text{-}\mathcal{G}\pd T\mid T\in \mathscr{C}\}=n<\infty$,
then $(\mathcal{GP}(\xi),\mathcal{P}(\xi))$ is a left $n$-cotorsion pair in $\mathscr{C}$.
\end{itemize}
\end{example}

In the following, we will study the closure properties of $\mathcal{X}^{\wedge}$ for a left Frobenius pair $(\mathcal{X},\omega)$ in $\mathscr{C}$.

\begin{lemma}\label{lemma-3-2}
Let $(\mathcal{X},\omega)$ be a left Frobenius pair in $\mathscr{C}$, and let
\begin{align}\label{1-tri}
\xymatrix@C=0.5cm{X\ar[r]&Y\ar[r]&Z\ar@{-->}[r]^{\delta_{1}}&}
\end{align}
be an $\mathbb{E}$-triangle in $\xi$.
\begin{itemize}
\item[(1)] Assume that $Z\in \mathcal{X}$. Then $X\in \mathcal{X}^{\wedge}$ if and only if $Y\in \mathcal{X}^{\wedge}$.
\item[(2)] Assume that $Y\in \mathcal{X}$. Then $X\in \mathcal{X}^{\wedge}$ if and only if $Z\in \mathcal{X}^{\wedge}$.
\end{itemize}
\end{lemma}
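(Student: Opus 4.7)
The plan is to split the lemma into its four one-directional implications and to prove them in an order that lets each feed the next.

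First, I would dispatch $(2)$ $(\Rightarrow)$ by a direct splicing: given an $\mathcal{X}$-resolution $X_n \to \cdots \to X_0 \to X$ of $X$ and the $\mathbb{E}$-triangle $\delta_1$ in $\xi$ with $Y \in \mathcal{X}$, the augmented sequence $X_n \to \cdots \to X_0 \to Y \to Z$ is $\xi$-exact: at the connecting spot, the differential $X_0 \to Y$ factors as the $\xi$-deflation $X_0 \to X$ followed by the $\xi$-inflation $X \to Y$, so $X$ plays the role of an internal syzygy, giving $Z \in \mathcal{X}^{\wedge}_{n+1}$. For $(1)$ $(\Leftarrow)$, I would extract from an $\mathcal{X}$-resolution of $Y$ an $\mathbb{E}$-triangle $Y' \to Y_0 \to Y$ in $\xi$ with $Y_0 \in \mathcal{X}$ and $Y' \in \mathcal{X}^{\wedge}_{n-1}$, and apply $(\mathrm{ET4})^{\mathrm{op}}$ to the composable $\xi$-deflations $Y_0 \to Y \to Z$. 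This yields $\mathbb{E}$-triangles $E \to Y_0 \to Z$ and $Y' \to E \to X$ in $\xi$ (the proper class $\xi$ being closed under such constructions). Closure of $\mathcal{X}$ under cocones of $\xi$-deflations then forces $E \in \mathcal{X}$, and applying $(2)$ $(\Rightarrow)$ to the triangle $Y' \to E \to X$ places $X$ in $\mathcal{X}^{\wedge}$.

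The heart of the argument is $(1)$ $(\Rightarrow)$, and this is where a naive induction along a resolution piece $X' \to X_0 \to X$ stalls: the only ``connection'' at $X$ between such a piece and $\delta_1$ is a deflation followed by an inflation, which does not slot into $(\mathrm{ET4})$, $(\mathrm{ET4})^{\mathrm{op}}$, or Lemma $2.14$ cleanly. The right move is to coresolve $X$ instead. By Proposition $4.4$, there is an $\mathbb{E}$-triangle $X \to Y^X \to X^X$ in $\xi$ with $Y^X \in \omega^{\wedge}$ and $X^X \in \mathcal{X}$; applying Lemma $2.14(2)$ to this triangle and $\delta_1$ (both start at $X$) produces an object $M$ with $\mathbb{E}$-triangles $Y \to M \to X^X$ and $Y^X \to M \to Z$ in $\xi$. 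The second of these splits automatically: Proposition $4.1$ gives $\omega^{\wedge} \subseteq \mathcal{X}^{\perp}$, hence $\xi xt_{\xi}^{1}(Z, Y^X) = 0$, and by the bijection in Remark $2.28$ the triangle realises the zero extension, so $M \cong Y^X \oplus Z \in \mathcal{X}^{\wedge}$. Then $(1)$ $(\Leftarrow)$, already in hand, applied to $Y \to M \to X^X$ (whose last term lies in $\mathcal{X}$) yields $Y \in \mathcal{X}^{\wedge}$.

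Finally, $(2)$ $(\Leftarrow)$ drops out from both halves of $(1)$: take $Z' \to Z_0 \to Z$ in $\xi$ with $Z_0 \in \mathcal{X}$ and $Z' \in \mathcal{X}^{\wedge}_{m-1}$ from an $\mathcal{X}$-resolution of $Z$, and apply Lemma $2.14(1)$ to $\delta_1$ and this triangle (both ending at $Z$) to obtain $M$ with $\mathbb{E}$-triangles $X \to M \to Z_0$ and $Z' \to M \to Y$ in $\xi$; then $(1)$ $(\Rightarrow)$ applied to the second (using $Y \in \mathcal{X}$) gives $M \in \mathcal{X}^{\wedge}$, and $(1)$ $(\Leftarrow)$ applied to the first (using $Z_0 \in \mathcal{X}$) delivers $X \in \mathcal{X}^{\wedge}$. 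The main obstacle throughout is $(1)$ $(\Rightarrow)$, and the decisive idea is that $\mathcal{X}$-injectivity of $\omega$, extended to $\omega^{\wedge} \subseteq \mathcal{X}^{\perp}$ by Proposition $4.1$, collapses what looks like a delicate dimension-shifting problem into a one-line splitting argument made accessible by the coresolution from Proposition $4.4$.
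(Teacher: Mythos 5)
Your proof is correct, and for three of the four implications it is essentially the paper's argument: your splicing for $(2)(\Rightarrow)$ is what the paper dismisses as ``clear''; your $(1)(\Leftarrow)$ via $(\mathrm{ET4})^{\mathrm{op}}$ applied to $Y_0\to Y\to Z$ is exactly the paper's diagram, with your $E$ playing the role of its $X_0'$; and your $(2)(\Leftarrow)$ via Lemma \ref{lem1}(1) reproduces the paper's diagram (\ref{diag-2}) verbatim. The one genuine divergence is in $(1)(\Rightarrow)$. The paper uses the \emph{resolution-side} approximation $K\to X_0\xrightarrow{p}X$ from Proposition \ref{prop-injective-2} with $K\in\omega^\wedge_{m-1}$, observes that $p_*\colon \xi xt^1_\xi(Z,X_0)\to\xi xt^1_\xi(Z,X)$ is an isomorphism because $Z\in{}^{\perp}(\omega^\wedge)$, lifts $\delta_1$ to a triangle $X_0\to X''\to Z$ with $X''\in\mathcal{X}$, and then runs (ET4) to identify $Y$ with the cone of $K\to X''$. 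You instead take the \emph{coresolution-side} approximation $X\to Y^X\to X^X$, push $\delta_1$ out via Lemma \ref{lem1}(2), and kill the triangle $Y^X\to M\to Z$ by the same orthogonality $\xi xt^1_\xi(\mathcal{X},\omega^\wedge)=0$, so that $M\cong Y^X\oplus Z\in\mathcal{X}^\wedge$ and $(1)(\Leftarrow)$ applied to $Y\to M\to X^X$ finishes. The two are dual exploitations of the same vanishing from Proposition \ref{prop-injective-1}; yours avoids the extension-lifting and the cone identification, at the (routine) cost of checking that $\mathcal{X}^\wedge$ is closed under finite direct sums, which holds since $\Delta_0\subseteq\xi$ and $\xi$ is closed under finite coproducts. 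Two points you assert without argument do need a word but are true: that the triangle $E\to Y_0\to Z$ produced by $(\mathrm{ET4})^{\mathrm{op}}$ lies in $\xi$ is precisely where the paper invokes saturation of $\xi$ (composability of $\xi$-deflations), while $Y'\to E\to X$ is in $\xi$ by base change.
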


\begin{proof}
(1) The ``If" part. Assume that $Y\in \mathcal{X}^{\wedge}$. We may set $\mathcal{X}\text{-}\res Y=m<\infty$.
The case  $m=0$ is clear. Now assume that $m\geq 1$. Then there is an $\mathbb{E}$-triangle
$\xymatrix@C=0.5cm{K\ar[r]&X_{0}\ar[r]&Y\ar@{-->}[r]^{\delta_{2}}&}$
in $\xi$ with $X_{0}\in \mathcal{X}$ and $K\in \mathcal{X}_{m-1}^{\wedge}$. By (ET4)$^{op}$, we have
 a commutative diagram as follows:
\begin{align}\label{diag-1}
\xymatrix@=0.5cm{K\ar@{=}[d]\ar@{.>}[r]^{k}&X'_{0}\ar@{.>}[d]
\ar@{.>}[r]^{f}&X\ar@{-->}[r]\ar[d]^{g}&\\
K\ar[r]&X_{0}\ar[r]\ar@{.>}[d]&Y\ar@{-->}[r]^{\delta_{2}}\ar[d]^{h}&\\
&Z\ar@{-->}[d]^{\delta_{3}}\ar@{=}[r]&Z\ar@{-->}[d]^{\delta_{1}}\\&&&}
\end{align}
where $h^{*}\delta_{3}=k_{*}\delta_{2},~f_{*}\delta_{3}=\delta_{1}.$

Applying Lemma \ref{lem1}
yields the following commutative diagram:$$\xymatrix@=0.5cm{&X\ar@{=}[r]\ar@{.>}[d]&X\ar[d]\\X'_{0}
\ar@{=}[d]\ar@{.>}[r]&M\ar@{.>}[r]
\ar@{.>}[d]&Y\ar@{-->}[r]^{h^*\delta_3}\ar[d]^h&\\
X'_{0}\ar[r]&X_{0}\ar@{-->}[d]\ar[r]&Z\ar@{-->}[d]^{\delta_{1}}\ar@{-->}[r]^{\delta_{3}}&\\&&&}$$
where the $\E$-triangle $\xymatrix@=0.5cm{X'_{0}\ar[r]&M\ar[r]&Y\ar@{-->}[r]^{h^{*}\delta_{3}}&}$ belongs to $\xi$ by the equality $h^{*}\delta_{3}=k_{*}\delta_{2}$. Then $\xymatrix@=0.5cm{X'_{0}\ar[r]&X_{0}\ar[r]&Z\ar@{-->}[r]^{\delta_{3}}&}$  belongs to $\xi$ since $\xi$ is saturated. It follows that $X'_{0}\in \mathcal{X}$ since $X_{0},Z\in \mathcal{X}$. Thus $X\in\mathcal{X}^{\wedge}$ by the top $\mathbb{E}$-triangle in the diagram (\ref{diag-1}).

The ``only if" part. Assume that $X\in \mathcal{X}^{\wedge}$. We may set $\mathcal{X}\text{-}\res X=m<\infty$.
The case  $m=0$ is clear. Assume that $m\geq 1$,
By Proposition \ref{prop-injective-2}, there is an  $\mathbb{E}$-triangles
\begin{align}\label{1-tri1}
\xymatrix@=0.5cm{K\ar[r]&X_{0}\ar[r]^p&X\ar@{-->}[r]&}
\end{align}
in $\xi$ with $X_{0}\in \mathcal{X}$ and $K\in \omega^{\wedge}_{m-1}$. Since $Z\in{^{\bot}(\omega^{\wedge})}$ by Corollary \ref{cor-4.5}, applying the functor $\Hom_{\mathscr{C}}(Z,-)$ to the $\mathbb{E}$-triangle(\ref{1-tri1})
yields an isomorphism  $p_*:\xi xt_{\xi}^{1}(Z,X_{0})\to \xi xt_{\xi}^{1}(Z,X)$.
Then we have the following commutative diagram
$$\xymatrix{
X _{0} \ar[r]\ar[d]^p&X''\ar[r]\ar[d]&Z\ar@{-->}[r]^{\delta'}\ar@{=}[d]&\\
X \ar[r]&Y\ar[r]&Z\ar@{-->}[r]^-{\delta=p_*\delta'}&,}$$
where all $\mathbb{E}$-triangles are in $\xi$.
Since $\mathcal{X}$ is closed under $\xi$-extensions, $X''\in \mathcal{X}$.
By (ET4), we have the following commutative diagram
$$
\xymatrix@=0.5cm{
   K \ar[r]\ar@{=}[d]&X_0\ar[r]^p\ar[d]&X\ar@{-->}[r]\ar@{.>}[d]&\\
    K \ar@{.>}[r] & X'' \ar@{.>}[r]\ar[d] &B\ar@{.>}[d]\ar@{-->}[r]&\\
    &Z\ar@{=}[r]\ar@{-->}[d]^{\delta'} &Z\ar@{-->}[d]^{p_*\delta'}&\\
    &&& }
$$
By the second horizontal $\mathbb{E}$-triangle, we know $B\in \mathcal{X}^{\wedge}$. Then $Y\cong B\in\mathcal{X}^{\wedge}$.

(2) The ``only of" part is clear.

The ``if" part. Assume that $Z\in \mathcal{X}^{\wedge}$. We may set $\mathcal{X}\text{-}\res Z=m<\infty$.
We proceed by induction on $m$. The case $m=0$ is clear. Now assume that $m\geq 1$.
Then there is an $\mathbb{E}$-triangle
$\xymatrix@=0.5cm{K\ar[r]&X_{0}\ar[r]&Z\ar@{-->}[r]&}$
in $\xi$ with $X_{0}\in\mathcal{X}$ and $K\in \mathcal{X}^{\wedge}_{m-1}$.
By Lemma \ref{lem1}, we get the following commutative diagram
\begin{align}\label{diag-2}
\xymatrix@=0.5cm{&K\ar@{=}[r]\ar@{.>}[d]&K\ar[d]&\\
X  \ar@{.>}[r]\ar@{=}[d]&U\ar@{.>}[r]\ar@{.>}[d]&X_{0}\ar@{-->}[r]\ar[d]&\\
X \ar[r]&Y\ar[r]\ar@{-->}[d]&Z\ar@{-->}[r]\ar@{-->}[d]&\\
&&&}
\end{align}
Since $\xi$ is closed under base change, the middle row and the middle column are in $\xi$. Note that $Y,X_{0}\in \mathcal{X}$.
By (1) and the middle column in the above diagram, we have $U\in \mathcal{X}^{\wedge}$. Finally, by  (1) and the middle row in the above diagram, $X\in \mathcal{X}^{\wedge}$.
\end{proof}

\begin{proposition}\label{prop-X}
Let $(\mathcal{X},\omega)$ be a left Frobenius pair in $\mathscr{C}$.
For any $T\in \mathscr{C}$, the following statements are equivalent.
\begin{itemize}
\item[(1)] $\mathcal{X}$-$\res T\leq n$.
\item[(2)] If
\begin{equation}\label{4.7}
  \xymatrix@C=0.5cm{K_{n}\ar[r]&X_{n-1}\ar[r]&\cdots\ar[r]&X_{1}\ar[r]&X_{0}\ar[r]&T}
\end{equation}
is a $\xi$-exact complex in $\mathscr{C}$ with $X_{i}\in \mathcal{X}$ for any $0\leq i\leq n-1$,
then $K_{n}\in \mathcal{X}$.
\end{itemize}
\end{proposition}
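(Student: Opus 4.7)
The plan is to prove $(1)\Rightarrow(2)$ as the main content, with $(2)\Rightarrow(1)$ following formally: given $(2)$, truncate an existing $\xi$-exact $\mathcal{X}$-resolution of $T$ (or a $\xi$-projective resolution, in the typical setting where $\mathcal{P}(\xi)\subseteq\mathcal{X}$) at the $n$-th syzygy to obtain a complex of the form \eqref{4.7}, apply $(2)$ to conclude $K_n\in\mathcal{X}$, and read off $\mathcal{X}$-$\res T\leq n$. For the substantive direction, my strategy is to split the verification that $K_n\in\mathcal{X}$ into two independent properties, namely $K_n\in\mathcal{X}^{\wedge}$ and $K_n\in{^{\perp}\omega}$, which together will force $K_n$ to split out of an Auslander--Buchweitz approximation.

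For the first property, I will decompose the $\xi$-exact complex \eqref{4.7} into the associated $\mathbb{E}$-triangles $K_{i+1}\to X_i\to K_i\dashrightarrow$ in $\xi$ (with $K_0:=T$) and invoke Lemma \ref{lemma-3-2}(2) repeatedly. The lemma's hypothesis that the middle term $X_i\in\mathcal{X}$ is met by construction, and starting from $T\in\mathcal{X}^{\wedge}$ (the content of $(1)$) a straightforward induction yields $K_i\in\mathcal{X}^{\wedge}$ for every $i$, in particular $K_n\in\mathcal{X}^{\wedge}$. This step uses only (LF1)--(LF2) via Lemma \ref{lemma-3-2}.

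For the second property, I will dimension-shift the Ext-groups through $T$ twice via Remark \ref{remark-ds}. Since $\omega$ is $\mathcal{X}$-injective by (LF3), every $X_i\in\mathcal{X}\subseteq{^{\perp}\omega}$, so for any $W\in\omega$ the complex \eqref{4.7} gives $\xi xt_{\xi}^{k}(K_n,W)\cong\xi xt_{\xi}^{k+n}(T,W)$ for all $k\geq 1$. Hypothesis $(1)$ provides a second $\xi$-exact complex $X'_m\to\cdots\to X'_0\to T$ with $m=\mathcal{X}$-$\res T\leq n$ and $X'_m\in\mathcal{X}\subseteq{^{\perp}\omega}$; applying Remark \ref{remark-ds} to this complex yields $\xi xt_{\xi}^{k+m}(T,W)\cong\xi xt_{\xi}^{k}(X'_m,W)=0$ for $k\geq 1$. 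Consequently $\xi xt_{\xi}^{k+n}(T,W)=0$ for $k\geq 1$, hence $\xi xt_{\xi}^{k}(K_n,W)=0$, so $K_n\in{^{\perp}\omega}={^{\perp}(\omega^{\wedge})}$ by Lemma \ref{thm-2}(1).

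Combining the two properties, Proposition \ref{prop-injective-2} applied to $K_n\in\mathcal{X}^{\wedge}$ gives an $\mathbb{E}$-triangle $Y\to X\to K_n\dashrightarrow$ in $\xi$ with $X\in\mathcal{X}$ and $Y\in\omega^{\wedge}$; since $K_n\in{^{\perp}(\omega^{\wedge})}$ forces $\xi xt_{\xi}^{1}(K_n,Y)=0$, this $\mathbb{E}$-triangle splits and exhibits $K_n$ as a direct summand of $X\in\mathcal{X}$, whence $K_n\in\mathcal{X}$ by (LF1). I expect the main subtlety to be the symmetric dimension-shifting: one must set up both Ext-comparisons through $T$ with the same target $W\in\omega$ and matching shift so that the vanishing propagates correctly, and one must pass from ${^{\perp}\omega}$ to ${^{\perp}(\omega^{\wedge})}$ via Lemma \ref{thm-2}(1) in order to split the approximation triangle against a cokernel $Y$ that lives only in $\omega^{\wedge}$, not $\omega$. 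Once this bookkeeping is in place, the splitting and the final appeal to (LF1) are automatic from the Frobenius-pair axioms.
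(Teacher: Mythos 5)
Your argument is correct and follows essentially the same route as the paper's: $K_n\in\mathcal{X}^{\wedge}$ via Lemma \ref{lemma-3-2}, then $K_n\in{}^{\perp}\omega$ by dimension-shifting through \eqref{4.7} and through a finite $\mathcal{X}$-resolution of $T$ using Remark \ref{remark-ds}, and finally $K_n\in\mathcal{X}^{\wedge}\cap{}^{\perp}\omega\subseteq\mathcal{X}$. The only cosmetic difference is that you re-derive this last inclusion inline (via Proposition \ref{prop-injective-2}, Lemma \ref{thm-2}(1) and a splitting argument) where the paper simply cites Proposition \ref{theorem-2}, and you make explicit the vanishing $\xi xt_{\xi}^{n+i}(T,W)=0$ that the paper leaves implicit.
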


\begin{proof}
$(2)\Longrightarrow(1)$ It is obvious.

$(1)\Longrightarrow(2)$ By Lemma \ref{lemma-3-2}, we have $K_{n}\in \mathcal{X}^{\wedge}$.
Since $\omega$ is $\mathcal{X}$-injective, that is, $\mathcal{X}\subseteq{^{\bot}}\omega$, then  we have
$\xi xt_{\xi}^{i}(K_{n},W)\cong \xi xt_{\xi}^{n+i}(T,W)=0$   
for any $W\in \omega$ and
 all $i\geq 1$. This shows that $K_{n}\in {^{\perp}\omega}$, and hence
$K_{n}\in\mathcal{X}^{\wedge}\cap{^{\perp}\omega}=\mathcal{X}$ by Proposition \ref{theorem-2}.
\end{proof}

The following result shows that $\mathcal{X}^{\wedge}$ satisfies the
two-out-of-three property.

\begin{theorem}\label{thm-4.9} {\rm (c.f. \cite[Propositions 3.10 and 3.14]{MDZ})}
Let $(\mathcal{X},\omega)$ be a left Frobenius pair in $\mathscr{C}$.
Then $\mathcal{X}^{\wedge}$ is closed under $\xi$-extensions, cocones of $\xi$-deflations , cones of $\xi$-inflations and direct summands.
\end{theorem}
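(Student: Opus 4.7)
My strategy combines three ingredients: Lemma \ref{lemma-3-2}, which already handles the case where one of the three entries of an $\mathbb{E}$-triangle in $\xi$ lies in $\mathcal{X}$ itself; Proposition \ref{prop-injective-2}, which furnishes Auslander-Buchweitz approximation triangles for any object of $\mathcal{X}^{\wedge}$; and the $3 \times 3$-type diagrams arising from Lemma \ref{lem1}(1), the (ET4) axiom, and its dual. The overall scheme is a simultaneous induction on $\mathcal{X}$-resolution dimension, and the pattern is always the same: introduce an approximation triangle $Y \to X_0 \to T \dashrightarrow$ (with $X_0 \in \mathcal{X}$ and $Y \in \omega^{\wedge} \subseteq \mathcal{X}^{\wedge}$ of strictly smaller resolution dimension) for a suitable entry $T$ of the original triangle, splice it in, and reduce to two new $\mathbb{E}$-triangles; one of them will have an entry in $\mathcal{X}$ (so Lemma \ref{lemma-3-2} applies), and the other will fall under one of the three closure properties at strictly smaller resolution dimension.

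I would first prove the three triangle-closure properties (extensions, cocones of $\xi$-deflations, and cones of $\xi$-inflations) simultaneously. For closure under $\xi$-extensions, given $A \to B \to C \dashrightarrow$ with $A, C \in \mathcal{X}^{\wedge}$, approximate $C$ by $K \to X_0 \to C \dashrightarrow$ and apply Lemma \ref{lem1}(1) to the two $\mathbb{E}$-triangles ending at $C$; the resulting object $M$ sits in $\mathbb{E}$-triangles $A \to M \to X_0 \dashrightarrow$ (Lemma \ref{lemma-3-2}(1) yields $M \in \mathcal{X}^{\wedge}$, with the bound $\mathcal{X}\text{-}\res M \leq \mathcal{X}\text{-}\res A$ extracted from the construction in its proof) and $K \to M \to B \dashrightarrow$ (to which closure under cones of $\xi$-inflations, at the relevant smaller resolution dimension, applies to yield $B \in \mathcal{X}^{\wedge}$). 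Closure under cones is handled by approximating $B$ and pulling back the deflation $X_0 \to B$ along the inflation $A \to B$: this produces $P$ sitting in $\mathbb{E}$-triangles $K \to P \to A \dashrightarrow$ (to which extensions, at smaller resolution dimension, apply to yield $P \in \mathcal{X}^{\wedge}$) and $P \to X_0 \to C \dashrightarrow$ (to which Lemma \ref{lemma-3-2}(2) applies to yield $C \in \mathcal{X}^{\wedge}$). Closure under cocones is treated analogously to extensions.

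For closure under direct summands, I would induct on $n = \mathcal{X}\text{-}\res(T_1 \oplus T_2)$. The case $n = 0$ is (LF1). For $n \geq 1$, take the approximation $Y \to X \to T_1 \oplus T_2 \dashrightarrow$ and apply $({\rm ET4})^{\operatorname{op}}$ to it together with the split $\mathbb{E}$-triangle $T_2 \to T_1 \oplus T_2 \to T_1 \dashrightarrow 0$ (which lies in $\xi$ since $\Delta_0 \subseteq \xi$). This produces $\mathbb{E}$-triangles $K_1 \to X \to T_1 \dashrightarrow$ and $Y \to K_1 \to T_2 \dashrightarrow$, and symmetrically $K_2 \to X \to T_2 \dashrightarrow$ and $Y \to K_2 \to T_1 \dashrightarrow$. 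The direct sum $K_1 \oplus K_2 \to X \oplus X \to T_1 \oplus T_2 \dashrightarrow$ is a second length-one approximation of $T_1 \oplus T_2$; comparing it with $Y \to X \to T_1 \oplus T_2 \dashrightarrow$ via Lemma \ref{lem1}(1) (a Schanuel-style comparison of two approximations) exhibits $K_1 \oplus K_2$ as a direct summand of an object of $\mathcal{X}$-resolution dimension at most $n - 1$. The induction hypothesis for direct summand closure at level $n - 1$ then places $K_1, K_2 \in \mathcal{X}^{\wedge}$, and Lemma \ref{lemma-3-2}(2) applied to $K_i \to X \to T_i \dashrightarrow$ (with $X \in \mathcal{X}$) yields $T_i \in \mathcal{X}^{\wedge}$.

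The main obstacle is the interlocking nature of the three triangle-closure statements: each inductive step naturally produces a triangle to which one of the other two properties wants to be applied. To break the circularity, I would index each of the three statements on the $\mathcal{X}$-resolution dimension of a specific entry (left, middle, or right) and interleave their proofs, e.g.\ proving at each level $n$ first cones (using extensions at level $n-1$) and then extensions (using cones at level $n$ together with the res-dim bound from Lemma \ref{lemma-3-2}(1)), so that every appeal to a companion property occurs at strictly smaller $\mathcal{X}$-resolution dimension. The direct-summand argument is also delicate, since the $({\rm ET4})^{\operatorname{op}}$-produced triangles alone cannot isolate $T_1$ or $T_2$ from one another, and the Schanuel-type comparison is needed to bring the induction hypothesis into play.
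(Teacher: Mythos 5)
Your proposal is correct and follows essentially the same route as the paper's proof: approximation $\mathbb{E}$-triangles from Proposition \ref{prop-injective-2}, the diagrams of Lemma \ref{lem1} and $({\rm ET4})^{\operatorname{op}}$, and Lemma \ref{lemma-3-2} to finish each step. Indeed your arguments for cocones and cones are, diagram for diagram, the paper's Claims 2 and 3. The paper outsources the remaining two claims (closure under $\xi$-extensions and under direct summands) to \cite[Propositions 3.10 and 3.14]{MDZ}, and what you add is explicit proofs of exactly those parts; your interleaved induction (extensions at level $m$ $\Rightarrow$ cones at level $m$ $\Rightarrow$ extensions at level $m-1$, with base cases supplied by Lemma \ref{lemma-3-2}) is a legitimate way to break the circularity that the paper avoids by proving extensions first as a standalone claim, and the quantitative bound $\mathcal{X}\text{-}\res M\le\mathcal{X}\text{-}\res A$ you extract from the proof of Lemma \ref{lemma-3-2}(1) is indeed available there.

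One step in your direct-summand argument deserves justification, though it is true. In the Schanuel comparison of $Y\to X\to T_1\oplus T_2$ with $K_1\oplus K_2\to X\oplus X\to T_1\oplus T_2$ via Lemma \ref{lem1}(1), the triangle $Y\to M\to X\oplus X$ splits because $\xi xt^1_{\xi}(X\oplus X,Y)=0$ (as $Y\in\omega^{\wedge}$ and $\mathcal{X}\perp\omega^{\wedge}$ by Proposition \ref{prop-injective-1}), but the splitting of $K_1\oplus K_2\to M\to X$ is \emph{not} an Ext-vanishing statement and cannot be one, since $\xi xt^1_{\xi}(X,K_i)$ need not vanish. It splits for a different reason: writing $y\colon X\to T_1\oplus T_2$ for the approximation deflation and $g=(p_1y)\oplus(p_2y)\colon X\oplus X\to T_1\oplus T_2$ for the deflation of the summed triangle $\delta_2$, one has $y=g\circ\Delta$ with $\Delta=\binom{1}{1}\colon X\to X\oplus X$, hence $y^{*}\delta_2=\Delta^{*}(g^{*}\delta_2)=0$ because a triangle pulled back along its own deflation is split. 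With that noted, $M\cong Y\oplus X\oplus X\cong K_1\oplus K_2\oplus X$, and the rest of your induction goes through. (Alternatively you could bypass the second splitting entirely: apply the quantitative Lemma \ref{lemma-3-2}(1) twice, first to $Y\to M\to X\oplus X$ and then to $K_1\oplus K_2\to M\to X$, to get $\mathcal{X}\text{-}\res(K_1\oplus K_2)\le n-1$ directly.)
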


\begin{proof}
Let
\begin{align}\label{2-tri}
\xymatrix@C=0.5cm{X\ar[r]&Y\ar[r]&Z\ar@{-->}[r]&}
\end{align}
be an $\mathbb{E}$-triangle in $\xi$.

(1) $\mathcal{X}^{\wedge}$ is closed under $\xi$-extensions. The proof is similar to that of \cite[Proposition 3.10]{MDZ}.

(2) $\mathcal{X}^{\wedge}$ is closed under cocones of $\xi$-deflations.
Indeed, let $Y,Z\in \mathcal{X}^{\wedge}$ with $\mathcal{X}\text{-}\res Z=n<\infty$. We proceed by induction on $n$.
The case for $n=0$ follows from Lemma \ref{lemma-3-2}(1). Now assume that $n\geq 1$. Then we have an $\mathbb{E}$-triangle
$\xymatrix@C=0.5cm{K\ar[r]&X_{0}\ar[r]&Z\ar@{-->}[r]&}$
in $\xi$ with $X_{0}\in\mathcal{X}$ and $K\in \mathcal{X}^{\wedge}_{n-1}$.
By Lemma \ref{lem1}, we have the following commutative diagram
$$\xymatrix@=0.5cm{&K\ar@{=}[r]\ar@{.>}[d]&K\ar[d]&\\
X  \ar@{.>}[r]\ar@{=}[d]&U\ar@{.>}[r]\ar@{.>}[d]&X_{0}\ar@{-->}[r]\ar[d]&\\
X \ar[r]&Y\ar[r]\ar@{-->}[d]&Z\ar@{-->}[r]\ar@{-->}[d]&\\
&&&}$$
where the second vertical and the second horizontal $\mathbb{E}$-triangles are in $\xi$.
By (1), we have $U\in \mathcal{X}^{\wedge}$. So $X\in \mathcal{X}^{\wedge}$ by Lemma \ref{lemma-3-2}(1).

Similarly, we can prove that $\mathcal{X}^{\wedge}$ is closed under cones of $\xi$-inflations.

(3) $\mathcal{X}^{\wedge}$ is closed under direct summands.
The proof is similar to that of \cite[Proposition 3.14]{MDZ}.
\end{proof}

Assume that $\mathcal{I}(\xi)$ is a cogenerating subcategory of $\mathscr{C}$ and $\mathcal{P}(\xi)$ is a generating subcategory of $\mathscr{C}$.
The following result provides
a method to construct (left) cotorsion pairs from left Frobenius pairs in $\mathscr{C}$ under the condition $\mathcal{X}^{\wedge}=\mathscr{C}$.

\begin{theorem}\label{thm-ftoc}

Assume that $\mathcal{I}(\xi)$ is a cogenerating subcategory of $\mathscr{C}$ and $\mathcal{P}(\xi)$ is a generating subcategory of $\mathscr{C}$.
Let $(\mathcal{X},\omega)$ be a left Frobenius pair in $\mathscr{C}$. Then the following statements are equivalent.
\begin{itemize}
\item[(1)] $\mathcal{X}^{\wedge}=\mathscr{C}$.
\item[(2)] $(\mathcal{X},\omega^{\wedge})$ is a cotorsion pair in $\mathscr{C}$ with $\xi \text{-}\id \omega<\infty$.
\item[(3)] $(\mathcal{X},\omega^{\wedge})$ is a left cotorsion pair in $\mathscr{C}$ with $\xi \text{-}\id \omega<\infty$.
\item[(4)] $\mathcal{X}={^{\perp}\omega}$ and $\xi \text{-}\id \omega<\infty$.
\end{itemize}
Moreover, if one of the above equivalent conditions holds, then $\mathcal{X}\text{-}\res \mathscr{C}=\xi$-$\id\omega$.
\end{theorem}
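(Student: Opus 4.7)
The strategy is to prove the cyclic implications $(1) \Rightarrow (2) \Rightarrow (3) \Rightarrow (4) \Rightarrow (1)$ and to derive the equality $\mathcal{X}\text{-}\res\mathscr{C} = \xi\text{-}\id\omega$ along the way. The principal tools are the Auslander--Buchweitz approximations of Proposition \ref{prop-injective-2}, the identity $\omega^{\wedge} = \mathcal{X}^{\perp} \cap \mathcal{X}^{\wedge}$ of Corollary \ref{cor-4.5}, the description $\mathcal{X} = \mathcal{X}^{\wedge} \cap {^{\perp}\omega}$ of Proposition \ref{theorem-2}, the inclusion $\mathcal{X} \subseteq {^{\perp}(\omega^{\wedge})}$ of Proposition \ref{prop-injective-1}(1), the identity $^{\perp}\omega = {^{\perp}(\omega^{\wedge})}$ of Lemma \ref{thm-2}(1), and dimension shifting via Remark \ref{remark-ds} combined with Lemma \ref{lemma-id}.

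For $(1) \Rightarrow (2)$: assuming $\mathcal{X}^{\wedge} = \mathscr{C}$, Proposition \ref{prop-injective-2} applied to every $C \in \mathscr{C}$ produces the two approximation $\mathbb{E}$-triangles required by axioms (L3) and (R3) for the pair $(\mathcal{X}, \omega^{\wedge})$; Proposition \ref{prop-injective-1}(1) yields $\xi xt^{1}_{\xi}(\mathcal{X}, \omega^{\wedge}) = 0$; and $\omega^{\wedge}$ is closed under summands because, under (1), Corollary \ref{cor-4.5} collapses it to $\mathcal{X}^{\perp}$. To obtain $\xi\text{-}\id\omega < \infty$, set $n = \mathcal{X}\text{-}\res\mathscr{C}$; for any $W \in \omega$ and $Y \in \mathscr{C}$, a $\xi$-exact $\mathcal{X}$-resolution of $Y$ of length at most $n$ combined with Remark \ref{remark-ds} (dualized) gives $\xi xt^{n+1}_{\xi}(Y, W) \cong \xi xt^{1}_{\xi}(K_n, W) = 0$ since $K_n \in \mathcal{X}$ and $W \in \omega \subseteq \mathcal{X}^{\perp}$, so Lemma \ref{lemma-id}(2) delivers $\xi\text{-}\id W \leq n$.

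The step $(2) \Rightarrow (3)$ is immediate. For $(3) \Rightarrow (4)$: Remark \ref{remark-contra}(1) yields $\mathcal{X} = {^{\perp_{1}}(\omega^{\wedge})}$; combining the inclusion $\mathcal{X} \subseteq {^{\perp}(\omega^{\wedge})} = {^{\perp}\omega}$ (Proposition \ref{prop-injective-1}(1) and Lemma \ref{thm-2}(1)) with the reverse inclusion coming from $Z \in {^{\perp}\omega} \Rightarrow Z \in {^{\perp}(\omega^{\wedge})} \subseteq {^{\perp_{1}}(\omega^{\wedge})} = \mathcal{X}$ gives $\mathcal{X} = {^{\perp}\omega}$, and the finiteness of $\xi\text{-}\id\omega$ is inherited. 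For $(4) \Rightarrow (1)$: every $P \in \mathcal{P}(\xi)$ satisfies $\xi xt^{\geq 1}_{\xi}(P, -) = 0$, hence $\mathcal{P}(\xi) \subseteq {^{\perp}\omega} = \mathcal{X}$. For any $Y \in \mathscr{C}$ take a $\xi$-projective resolution and let $K_n$ be its $n$-th syzygy with $n = \xi\text{-}\id\omega$; for each $W \in \omega$ and $i \geq 1$, Remark \ref{remark-ds} and Lemma \ref{lemma-id}(2) yield $\xi xt^{i}_{\xi}(K_n, W) \cong \xi xt^{n+i}_{\xi}(Y, W) = 0$, so $K_n \in {^{\perp}\omega} = \mathcal{X}$ and hence $\mathcal{X}\text{-}\res Y \leq n$.

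The dimension equality $\mathcal{X}\text{-}\res\mathscr{C} = \xi\text{-}\id\omega$ then assembles from the two one-sided bounds: the syzygy argument in $(4) \Rightarrow (1)$ supplies $\mathcal{X}\text{-}\res\mathscr{C} \leq \xi\text{-}\id\omega$, while the dimension-shift argument in $(1) \Rightarrow (2)$ gives the reverse. The main obstacle I anticipate is the coordination in $(1) \Rightarrow (2)$: the passage from the objectwise finiteness built into $\mathcal{X}^{\wedge} = \mathscr{C}$ to a uniform numerical bound on $\mathcal{X}\text{-}\res$, so that Lemma \ref{lemma-id}(2) can be invoked simultaneously for every $W \in \omega$. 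Here the standing hypothesis that $\mathcal{I}(\xi)$ cogenerates is exactly what activates the Ext-vanishing criterion of Lemma \ref{lemma-id}(2), while the dual hypothesis that $\mathcal{P}(\xi)$ generates is what authorizes the syzygy bookkeeping in $(4) \Rightarrow (1)$ via Lemma \ref{lemma-id}(1).
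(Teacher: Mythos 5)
Your proposal is correct and follows essentially the same route as the paper's proof: the same cycle $(1)\Rightarrow(2)\Rightarrow(3)\Rightarrow(4)\Rightarrow(1)$, with Proposition \ref{prop-injective-2} supplying the approximation $\mathbb{E}$-triangles for (L3)/(R3), Proposition \ref{prop-injective-1} and Corollary \ref{cor-4.5} handling the orthogonality and closure properties of $\omega^{\wedge}$ (your observation that under (1) it collapses to $\mathcal{X}^{\perp}$ is a slight shortcut past the paper's appeal to Theorem \ref{thm-4.9}), and the two dimension-shifting arguments via Remark \ref{remark-ds} and Lemma \ref{lemma-id} yielding the bounds $\xi\text{-}\id\omega\leq\mathcal{X}\text{-}\res\mathscr{C}$ and $\mathcal{X}\text{-}\res\mathscr{C}\leq\xi\text{-}\id\omega$. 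The uniformity issue you flag in $(1)\Rightarrow(2)$ --- that Lemma \ref{lemma-id}(2) needs a single $n$ bounding $\mathcal{X}\text{-}\res T$ over all $T\in\mathscr{C}$, not merely objectwise finiteness --- is not resolved by the paper either, whose proof likewise fixes $n$ together with $T$ and passes silently to ``$\xi\text{-}\id W\leq n$''.
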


\begin{proof}
$(1)\Longrightarrow(2)$
Firstly, $\omega^{\wedge}=\mathcal{X}^{\perp}\cap\mathcal{X}^{\wedge}$ by Corollary \ref{cor-4.5}.
Secondly, $\mathcal{X}^{\wedge}$ is closed under direct summands by Proposition \ref{thm-4.9},
thus $\omega^{\wedge}$ is closed under direct summands. Moreover,
  $\xi xt_{\xi}^{1}(\mathcal{X},\omega^{\wedge})=0$ by Proposition \ref{prop-injective-1}.
Finally,
we can get that  $(\mathcal{X},\omega^{\wedge})$ is a cotorsion pair in $\mathscr{C}$ by Proposition \ref{prop-injective-2}.

Next let $W\in \omega$ and $T\in \mathscr{C}$ with $\mathcal{X}\text{-}\res T= n<\infty$.
Then there is a $\xi$-exact complex
$$\xymatrix@C=0.5cm{X_{n}\ar[r]&X_{n-1}\ar[r]&\cdots \ar[r]&X_{1}\ar[r]&X_{0}\ar[r]&T}$$
in $\mathscr{C}$ with $X_{i}\in \mathcal{X}\subseteq {^{\bot}}\omega$ for any $0\leq i\leq n$.
Applying the functor $\Hom_{\mathscr{C}}(-,W)$,  we can get
$$\xi xt_{\xi}^{n+1}(T,W)\cong \xi xt_{\xi}^{1}(X_{n},W)=0.$$
Hence $\xi\text{-}\id W\leq n$ by \cite[Lemma 3.9]{HZZZ}, and thus $\xi\text{-}\id \omega<\infty$.

$(2)\Longrightarrow(3)$ Obviously.

$(3)\Longrightarrow(4)$ By Remark \ref{remark-contra}, $\mathcal{X}={^{\perp_{1}}(\omega^{\wedge})}$.
Clearly, ${^{\perp}(\omega^{\wedge})}\subseteq {^{\perp_{1}}(\omega^{\wedge})}=\mathcal{X}$.
On the other hand,  $\mathcal{X}\subseteq {^{\perp}(\omega^{\wedge})}$
by Proposition \ref{prop-injective-1}.
Thus ${^{\perp_{1}}(\omega^{\wedge})}={^{\perp}(\omega^{\wedge})}$.
Clearly, ${^{\perp}(\omega^{\wedge})}= {^{\perp}\omega}$.
Thus $\mathcal{X}={^{\perp_{1}}(\omega^{\wedge})}= {^{\perp}\omega}$.

$(4)\Longrightarrow(1)$ Assume that $\xi\text{-}\id \omega=n<\infty$. For any $T\in \mathscr{C}$,
consider the following $\xi$-exact complex
$$\xymatrix@C=0.5cm{K\ar[r]&P_{n-1}\ar[r]&\cdots \ar[r]&P_{1}\ar[r]&P_{0}\ar[r]&T}$$
in $\mathscr{C}$ with $P_{i}\in \mathcal{P}(\xi)$ for any $0\leq i\leq n-1$.
For any $W\in \omega$,
 by \cite[Lemma 3.9]{HZZZ} and Remark \ref{remark-ds}, we have
$$\xi xt^{i}_{\xi}(K,W)\cong \xi xt^{n+i}_{\xi}(T,W)=0$$   
for any $i\geq 1$ since $\xi \text{-}\id W\leq n$.
So $K\in {^{\perp}\omega}$. Moreover, since $\mathcal{X}={^{\perp}\omega}$, we have that $K\in \mathcal{X}$.
It follows that $\mathcal{X}\text{-}\res T\leq n$ and then $T\in \mathcal{X}^{\wedge}$.
Therefore $\mathscr{C}=\mathcal{X}^{\wedge}$.
\end{proof}

Putting $\mathcal{X}=\mathcal{GP}(\xi)$ and $\omega=\mathcal{P}(\xi)$ in
Theorem \ref{thm-ftoc}, we can get an application in Gorenstein homological algebra as follows, in which
$(1)\Longrightarrow(2)$ was given in \cite[Proposition 4.7]{HZZ1}.

\begin{corollary}\label{cor-4.15}
Assume that $\mathcal{I}(\xi)$ is a cogenerating subcategory of $\mathscr{C}$ and $\mathcal{P}(\xi)$ is a generating subcategory of $\mathscr{C}$.
Then the following statements are equivalent.
\begin{itemize}
\item[(1)] $\sup\{\xi\text{-}\mathcal{G}\pd T\mid T\in \mathscr{C}\}< \infty$.
\item[(2)] $(\mathcal{GP}(\xi),{\mathcal{P}(\xi)}^\wedge)$ is a cotorsion pair
in $\mathscr{C}$ and $\xi\mbox{-}\id \mathcal{P}(\xi)<\infty$.
\item[(3)] $(\mathcal{GP}(\xi),{\mathcal{P}(\xi)}^\wedge)$
is a left cotorsion pair in $\mathscr{C}$ and $\xi\mbox{-}\id \mathcal{P}(\xi)<\infty$.
\item[(4)] $\mathcal{GP}(\xi)={^{\perp}\mathcal{P}(\xi)}$ and
$\xi\mbox{-}\id \mathcal{P}(\xi)<\infty$.
\end{itemize}
Moreover, if one of the above conditions holds, then
$\sup\{\xi\text{-}\mathcal{G}\pd T\mid T\in \mathscr{C}\}= \xi\mbox{-}\id \mathcal{P}(\xi)$.
\end{corollary}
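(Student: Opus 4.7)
The plan is to derive the corollary as a direct specialization of Theorem \ref{thm-ftoc} to the left Frobenius pair $(\mathcal{X},\omega)=(\mathcal{GP}(\xi),\mathcal{P}(\xi))$. First I would invoke Example \ref{exa-4.7}(1), which records that $(\mathcal{GP}(\xi),\mathcal{P}(\xi))$ is a left Frobenius pair in $\mathscr{C}$; combined with the standing hypotheses that $\mathcal{P}(\xi)$ is generating and $\mathcal{I}(\xi)$ is cogenerating, this puts us exactly in the situation of Theorem \ref{thm-ftoc}. Under this identification, statements (2), (3) and (4) of the corollary are literally statements (2), (3) and (4) of Theorem \ref{thm-ftoc}, so their mutual equivalence requires no further argument.

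It remains to translate condition (1) of the corollary, namely $\sup\{\xi\text{-}\mathcal{G}\pd T\mid T\in\mathscr{C}\}<\infty$, into condition (1) of Theorem \ref{thm-ftoc}, namely $\mathcal{GP}(\xi)^{\wedge}=\mathscr{C}$. For this I would appeal to the identification recorded in Section~\ref{P} that $\mathcal{GP}(\xi)\text{-}\dim M$ coincides with $\xi\text{-}\mathcal{G}\pd M$ for every $M\in\mathscr{C}$. In the forward direction, if the supremum is finite then every object has finite $\mathcal{GP}(\xi)$-resolution dimension, so $\mathcal{GP}(\xi)^{\wedge}=\mathscr{C}$. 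Conversely, assuming $\mathcal{GP}(\xi)^{\wedge}=\mathscr{C}$, the ``moreover'' clause of Theorem \ref{thm-ftoc} gives the uniform bound
\[
\mathcal{GP}(\xi)\text{-}\res\mathscr{C}=\xi\text{-}\id\mathcal{P}(\xi),
\]
and the right-hand side is finite because each of (2), (3), (4) forces $\xi\text{-}\id\mathcal{P}(\xi)<\infty$. Re-reading the left-hand side as $\sup\{\xi\text{-}\mathcal{G}\pd T\mid T\in\mathscr{C}\}$ yields both the implication $(1)\Leftarrow\mathcal{GP}(\xi)^{\wedge}=\mathscr{C}$ and the final equality claimed in the ``moreover'' part of the corollary.

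There is no serious obstacle in this argument; the only point that deserves care is checking that the general dictionary from Theorem \ref{thm-ftoc} is compatible with the Gorenstein terminology, namely that the two notions of dimension $\mathcal{GP}(\xi)\text{-}\dim$ and $\xi\text{-}\mathcal{G}\pd$ agree on every object (which is already stated in Section~\ref{P}) and that $\omega^{\wedge}=\mathcal{P}(\xi)^{\wedge}$ on the nose. Once these identifications are made, the corollary is a one-line specialization of Theorem \ref{thm-ftoc}.
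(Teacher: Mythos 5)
Your proposal is correct and matches the paper's own treatment: the paper simply states that the corollary follows by putting $\mathcal{X}=\mathcal{GP}(\xi)$ and $\omega=\mathcal{P}(\xi)$ in Theorem \ref{thm-ftoc}, relying on Example \ref{exa-4.7}(1) for the left Frobenius pair and on the identification $\mathcal{GP}(\xi)\text{-}\dim=\xi\text{-}\mathcal{G}\pd$ from Section 2. Your explicit handling of the passage between the uniform bound $\sup\{\xi\text{-}\mathcal{G}\pd T\}<\infty$ and the pointwise condition $\mathcal{GP}(\xi)^{\wedge}=\mathscr{C}$ via the ``moreover'' clause is exactly the right (and only nontrivial) point, which the paper leaves implicit.
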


Furthermore, we have the following result.

\begin{proposition}\label{prop-5}
Assume that $\mathcal{P}(\xi)$ is a generating subcategory of $\mathscr{C}$
and $\mathcal{I}(\xi)$ is a cogenerating subcategory of $\mathscr{C}$.
\begin{itemize}
\item[(1)] If $(\mathcal{GP}(\xi),\mathcal{P}(\xi))$ is a left $n$-cotorsion pair in $\mathscr{C}$, then
$$\sup\{\xi\text{-}\mathcal{G}\pd T\mid T\in \mathscr{C}\}=\xi\mbox{-}\id \mathcal{P}(\xi)\leq n.$$
Dually, if $(\mathcal{I}(\xi),\mathcal{GI}(\xi))$ is a right $m$-cotorsion pair in $\mathscr{C}$, then
$$\sup\{\xi\text{-}\mathcal{G}\id T\mid T\in \mathscr{C}\}=\xi\mbox{-}\pd \mathcal{I}(\xi)\leq m.$$
\item[(2)] If there are positive integers $n,\ m$ such that $(\mathcal{GP}(\xi),\mathcal{P}(\xi))$
is a left $n$-cotorsion pair, $(\mathcal{I}(\xi),\mathcal{GI}(\xi))$ is a right $m$-cotorsion pair
in $\mathscr{C}$ and $\mathscr{C}$ satisfies Condition(*), then we can choose $n=m=\xi\mbox{-}\id\mathcal{P}(\xi)= \xi\mbox{-}\pd \mathcal{I}(\xi)$.
\end{itemize}
\end{proposition}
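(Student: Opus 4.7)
The plan is to derive part (1) by combining the approximation triangle from axiom (LN3) with a short $\xi$-projective resolution and then invoking Corollary \ref{cor-4.15}, and to deduce part (2) by running part (1) on both sides, collapsing the two suprema via Remark \ref{2.29}(3), and realizing the common value as an admissible ``degree'' through Proposition \ref{prop-injective-2}.

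For part (1), fix $T\in\mathscr{C}$. Axiom (LN3) for the left $n$-cotorsion pair $(\mathcal{GP}(\xi),\mathcal{P}(\xi))$ furnishes an $\mathbb{E}$-triangle $K\to U\to T\dashrightarrow$ in $\xi$ with $U\in\mathcal{GP}(\xi)$ and $K\in\mathcal{P}(\xi)^{\wedge}_{n-1}$, that is, $\xi$-$\pd K\le n-1$. Choose a length-$(n-1)$ $\xi$-projective resolution $P_{n-1}\to\cdots\to P_0\to K$ and splice it with the triangle along the factorization $P_0\twoheadrightarrow K\hookrightarrow U$; since $\mathcal{P}(\xi)\subseteq\mathcal{GP}(\xi)$, this produces a $\xi$-exact complex $P_{n-1}\to\cdots\to P_0\to U\to T$ of length $n$ whose non-$T$ entries all lie in $\mathcal{GP}(\xi)$. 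Hence $\xi$-$\mathcal{G}\pd T\le n$, and therefore $\sup\{\xi\text{-}\mathcal{G}\pd T\mid T\in\mathscr{C}\}\le n<\infty$; Corollary \ref{cor-4.15} then upgrades this bound to the equality $\sup\{\xi\text{-}\mathcal{G}\pd T\}=\xi\text{-}\id\mathcal{P}(\xi)$. The dual statement follows by the symmetric argument using axiom (RN3), a $\xi$-injective coresolution, and the dual of Corollary \ref{cor-4.15}.

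For part (2), applying (1) to both hypotheses yields $\sup\{\xi\text{-}\mathcal{G}\pd T\}=\xi\text{-}\id\mathcal{P}(\xi)\le n$ and $\sup\{\xi\text{-}\mathcal{G}\id T\}=\xi\text{-}\pd\mathcal{I}(\xi)\le m$, both finite. Remark \ref{2.29}(3) then forces these suprema to agree, so $\xi\text{-}\id\mathcal{P}(\xi)=\xi\text{-}\pd\mathcal{I}(\xi)=:N$. It remains to verify that $n=m=N$ is itself an admissible choice, i.e.\ that $(\mathcal{GP}(\xi),\mathcal{P}(\xi))$ is a left $N$-cotorsion pair and $(\mathcal{I}(\xi),\mathcal{GI}(\xi))$ is a right $N$-cotorsion pair. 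Axiom (LN1) is Example \ref{exa-4.7}(1.1), axiom (LN2) is immediate from $\mathcal{P}(\xi)\subseteq\mathcal{GP}(\xi)^{\perp}$ (Example \ref{exa-4.7}(1.3)), and for (LN3) we use that $T\in\mathcal{GP}(\xi)^{\wedge}_N$ (from $\xi$-$\mathcal{G}\pd T\le N$) and apply Proposition \ref{prop-injective-2} to the left Frobenius pair $(\mathcal{GP}(\xi),\mathcal{P}(\xi))$ to obtain an $\mathbb{E}$-triangle $Y_T\to X_T\to T\dashrightarrow$ in $\xi$ with $X_T\in\mathcal{GP}(\xi)$ and $Y_T\in\mathcal{P}(\xi)^{\wedge}_{N-1}$; the right $N$-cotorsion pair axioms are verified dually.

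The main obstacle is the splicing step in part (1): one must check that the composite $P_0\to U$ factors as a $\xi$-deflation followed by a $\xi$-inflation so that the resulting length-$n$ sequence genuinely qualifies as a $\xi$-exact complex in the sense of the paper. Once that verification is in hand, the rest is a formal assembly of Corollary \ref{cor-4.15}, Remark \ref{2.29}(3), and Proposition \ref{prop-injective-2}, so no new Ext-vanishing or approximation argument is required.
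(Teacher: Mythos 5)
Your proposal is correct and follows essentially the same route as the paper: for (1) the axiom (LN3) triangle spliced with a $\xi$-projective resolution of $K\in\mathcal{P}(\xi)^{\wedge}_{n-1}$ shows every object has $\xi$-$\mathcal{G}$projective dimension at most $n$ (the paper states this in one line), and Corollary \ref{cor-4.15} then gives the equality; for (2) both proofs apply (1) twice and invoke Remark \ref{2.29}(3). Your additional verification that the common value $N$ is itself an admissible choice of $n$ and $m$ (via Example \ref{exa-4.7} and Proposition \ref{prop-injective-2}) is a worthwhile detail the paper leaves implicit, and the splicing step you flag as the main obstacle is immediate from the paper's definition of a $\xi$-exact complex, since the differential $P_0\to U$ is constructed precisely as the deflation $P_0\twoheadrightarrow K$ followed by the inflation $K\rightarrowtail U$.
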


\begin{proof}
(1) Assume that $(\mathcal{GP}(\xi),\mathcal{P}(\xi))$ is a left $n$-cotorsion pair in $\mathscr{C}$.
Then
$\sup\{\xi\text{-}\mathcal{G}\pd T\mid T\in \mathscr{C}\}\leq n$.
By Corollary \ref{cor-4.15}, we have
$$\sup\{\xi\text{-}\mathcal{G}\pd T\mid T\in \mathscr{C}\}=\xi\text{-}\id \mathcal{P}(\xi)\leq n.$$
Dually, we get the second assertion.

(2) By (1), we have $\sup\{\xi\text{-}\mathcal{G}\pd T\mid T\in \mathscr{C} \}=\xi\text{-}\id \mathcal{P}(\xi)\leq n$
and $\sup\{\xi\text{-}\mathcal{G}\id T\mid T\in \mathscr{C} \}=\xi\text{-}\pd \mathcal{I}(\xi)\leq m$, where $m,~n$ are positive integers. It follows from Remark \ref{2.29} that $\sup\{\xi\text{-}\mathcal{G}\pd T\mid T\in \mathscr{C} \}=\sup\{\xi\text{-}\mathcal{G}\id T\mid T\in \mathscr{C} \}
=\xi\mbox{-}\pd \mathcal{I}(\xi)=\xi\mbox{-}\id \mathcal{P}(\xi).$
\end{proof}

\begin{corollary}
Assume that $\mathcal{P}(\xi)$ is a generating subcategory of $\mathscr{C}$ and $\mathcal{I}(\xi)$
is a cogenerating subcategory of $\mathscr{C}$, and assume that $(\mathcal{GP}(\xi),\mathcal{P}(\xi))$
is a left $n$-cotorsion pair in $\mathscr{C}$, and $(\mathcal{I}(\xi),\mathcal{GI}(\xi))$ is a right $m$-cotorsion pair in $\mathscr{C}$. Then we have
\begin{itemize}
\item[(1)] $\mathscr{C}={\mathcal{GP}(\xi)}_{n}^\wedge={^{\perp_{n}}}({\mathcal{P(\xi)}}_{n-1}^\wedge)$.
\item[(2)] $\mathscr{C}={\mathcal{GI}(\xi)}_{m}^\vee=({\mathcal{I(\xi)}}_{m-1}^\vee){^{\perp_{m}}}$.
\end{itemize}
\end{corollary}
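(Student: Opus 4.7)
The plan is to derive part~(1) from Proposition~\ref{prop-5}(1) together with the resolving-cotorsion-pair corollary at the end of Section~3; part~(2) is entirely dual.

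For the first equality $\mathscr{C}={\mathcal{GP}(\xi)}_n^\wedge$, Proposition~\ref{prop-5}(1) directly gives $\sup\{\xi\text{-}\mathcal{G}\pd\, T\mid T\in\mathscr{C}\}\leq n$, so every object has $\mathcal{GP}(\xi)$-resolution dimension at most $n$. (A direct argument from~(LN3) also works: for any $T\in\mathscr{C}$, splicing a $\xi$-projective resolution of the object $K\in{\mathcal{P}(\xi)}^\wedge_{n-1}$ appearing in the $\mathbb{E}$-triangle $K\to U\to T$ with that triangle produces a $\mathcal{GP}(\xi)$-resolution of $T$ of length $\leq n$, using $\mathcal{P}(\xi)\subseteq\mathcal{GP}(\xi)$.) For the second equality, Lemma~\ref{thm-1} identifies $(\mathcal{GP}(\xi),{\mathcal{P}(\xi)}^\wedge_{n-1})$ as a left cotorsion pair with $\mathcal{GP}(\xi)={^{\perp_1}}({\mathcal{P}(\xi)}^\wedge_{n-1})$; Corollary~\ref{cor-4.15}, which is applicable because $\sup\{\xi\text{-}\mathcal{G}\pd\, T\}<\infty$, then yields the full cotorsion pair $(\mathcal{GP}(\xi),{\mathcal{P}(\xi)}^\wedge)$ with $\mathcal{GP}(\xi)$ resolving (by Example~\ref{exa-4.7}). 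The resolving-cotorsion-pair corollary stated at the end of Section~3 then translates resolution dimension into orthogonality, yielding the identification ${^{\perp_n}}({\mathcal{P}(\xi)}^\wedge_{n-1})={\mathcal{GP}(\xi)}_n^\wedge=\mathscr{C}$.

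Part~(2) follows dually, using the right $m$-cotorsion pair $(\mathcal{I}(\xi),\mathcal{GI}(\xi))$ and the duals of Proposition~\ref{prop-5}(1) and the resolving-cotorsion-pair corollary. The main technical obstacle will be the precise matching of indices in the second equality: Proposition~\ref{prop-7} and Corollary~\ref{cor-2} naturally supply a shift of $+1$ between the orthogonality index and the resolution dimension, so aligning the exponent $n$ with the resolution-dimension bound $n-1$ appearing in ${\mathcal{P}(\xi)}^\wedge_{n-1}$ calls for a careful dim-shifting argument along a $\mathcal{GP}(\xi)$-resolution of length $\leq n$, using Proposition~\ref{prop-1}, Lemma~\ref{thm-2}(1), and Proposition~\ref{prop-injective-1}(1) to ensure that enough $\xi$xt-vanishing propagates through successive syzygies.
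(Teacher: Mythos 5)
Your treatment of the first equality $\mathscr{C}=\mathcal{GP}(\xi)^{\wedge}_{n}$ is correct and is exactly the paper's route (Proposition \ref{prop-5}(1)); the alternative splicing argument from (LN3) also works. The problem is the second equality, where you yourself flag ``the precise matching of indices'' as the main obstacle and then defer it to an unspecified ``careful dim-shifting argument.'' That step is the entire content of the claim and is never carried out, so the proposal has a genuine gap. Concretely, your chain of citations delivers only this: $(\mathcal{GP}(\xi),\mathcal{P}(\xi)^{\wedge})$ is a cotorsion pair with $\mathcal{GP}(\xi)$ resolving, and the corollary at the end of Section~3 gives ${^{\perp_{k+1}}}(\mathcal{P}(\xi)^{\wedge})=\mathcal{GP}(\xi)^{\wedge}_{k}$, hence $\mathscr{C}=\mathcal{GP}(\xi)^{\wedge}_{n}={^{\perp_{n+1}}}(\mathcal{P}(\xi)^{\wedge})\subseteq{^{\perp_{n+1}}}(\mathcal{P}(\xi)^{\wedge}_{n-1})$. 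Dimension shifting along a $\mathcal{GP}(\xi)$-resolution of length $n$ likewise only controls $\xi xt_{\xi}^{i}(T,-)$ for $i\geq n+1$: if $K_{n}$ is the $n$-th syzygy, Remark \ref{remark-ds} reads $\xi xt_{\xi}^{k}(K_{n},Y)\cong\xi xt_{\xi}^{k+n}(T,Y)$ for $k\geq 1$, which never reaches degree $n$.

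No dim-shifting refinement will close this gap, because the degree-$n$ statement is false as printed. Take $\mathscr{C}=\mathbb{Z}\text{-}\mathrm{Mod}$ with $\xi$ the class of all short exact sequences and $n=m=1$. Then $\mathcal{GP}(\xi)=\mathcal{P}(\xi)$ (free abelian groups) and $\mathcal{GI}(\xi)=\mathcal{I}(\xi)$ (divisible groups); since $\mathrm{gl.dim}\,\mathbb{Z}=1$, the pair $(\mathcal{P}(\xi),\mathcal{P}(\xi))$ is a left $1$-cotorsion pair, $(\mathcal{I}(\xi),\mathcal{I}(\xi))$ is a right $1$-cotorsion pair, and the generating/cogenerating hypotheses hold. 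Yet ${^{\perp_{1}}}(\mathcal{P}(\xi)^{\wedge}_{0})={^{\perp_{1}}}\mathcal{P}(\xi)$ does not contain $\mathbb{Z}/2\mathbb{Z}$, since $\Ext^{1}_{\mathbb{Z}}(\mathbb{Z}/2\mathbb{Z},\mathbb{Z})\neq 0$; so $\mathscr{C}\neq{^{\perp_{1}}}(\mathcal{P}(\xi)^{\wedge}_{0})$. What is true, and what both your argument and the paper's actually establish, is $\mathscr{C}={^{\perp_{n+1}}}(\mathcal{P}(\xi)^{\wedge}_{n-1})$: the paper's own appeal to Corollary \ref{cor-2}, which converts $\mathcal{U}$-resolution dimension $\leq k$ into $\mathscr{C}={^{\perp_{k+1}}}\mathcal{V}$ rather than ${^{\perp_{k}}}\mathcal{V}$, carries the same shift. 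Your instinct that the indices do not line up was correct; the honest conclusion is that they cannot be made to line up, and the exponent in the statement should be $n+1$.
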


\begin{proof}
We only  prove (1), and (2) is  dual.
By Proposition \ref{prop-5}(1) $\mathscr{C}={\mathcal{GP}(\xi)}_{n}^\wedge$. By Theorem \ref{thm-1},
 $\mathcal{GP}(\xi)={^ {\perp_{1}}({\mathcal{P}(\xi)}_{n-1}^\wedge)}$.
Moreover, ${{\mathcal{P}(\xi)}^{\wedge}_{n-1}}\subseteq({{\mathcal{P}(\xi)}^{\wedge}_{n-1}})^{\wedge}_{n}$,
thus $(\mathcal{GP}(\xi),({{\mathcal{P}(\xi)}^{\wedge}_{n-1}})^{\wedge}_{n})$ is a left $(n+1)$-cotorsion pair
in $\mathscr{C}$. Then $\mathscr{C}={^{\perp_{n}}({\mathcal{P}(\xi)}^{\wedge}_{n-1})}$ by Corollary \ref{cor-2}.
\end{proof}

 Hashimoto \cite{H00A} introduced and studied relative Auslander-Buchweitz contexts in abelian categories. Ma et al. \cite{MZ} introduced and studied relative Auslander-Buchweitz contexts in triangulated categories with a proper class of triangles.
Motivated by it, we will introduce and study the  left (weak) Auslander-Buchweitz contexts with respect to $\xi$ in an extriangulated category.

\begin{definition}\label{def-leftAB}
Let $\mathcal{A}$ and $\mathcal{B}$ be subcategories of $\mathscr{C}$. Set $\omega:=\mathcal{A}\cap \mathcal{B}$.
We say that $(\mathcal{A},\mathcal{B})$ is a \emph{left weak Auslander-Buchweitz context}
(\emph{left weak $AB$ context} for short) in $\mathscr{C}$ if the following conditions are satisfied.
\begin{itemize}
\item[(LAB1)]  $(\mathcal{A},\omega)$ is a left Frobenius pair in $\mathscr{C}$.
\item[(LAB2)] $\mathcal{B}$ is closed under  direct summands, $\xi$-extensions and cones of $\xi$-inflations.
\item[(LAB3)] $\mathcal{B}\subseteq\mathcal{A}^{\wedge}$.
\end{itemize}
A left weak $AB$ context $(\mathcal{A},\mathcal{B})$ is called a \emph{left $AB$ context} if
the following condition is satisfied.
\begin{itemize}
\item[(LAB4)]
$\mathcal{A}^{\wedge}=\mathscr{C}.$
\end{itemize}
\end{definition}

The following proposition provides a method to obtain  left (weak) $AB$ contexts from left Frobenius pairs
in $\mathscr{C}$.

\begin{proposition}\label{prop-ftoc}
Let $(\mathcal{X},\omega)$ be a left Frobenius pair in $\mathscr{C}$.
Then $(\mathcal{X},\omega^{\wedge})$ is a left weak $AB$ context in $\mathscr{C}$.
Moreover, if $\mathcal{X}^{\wedge}=\mathscr{C}$, then $(\mathcal{X},\omega^{\wedge})$
is a left $AB$ context in $\mathscr{C}$.
\end{proposition}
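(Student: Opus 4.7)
The plan is to verify the four defining conditions (AB1)--(AB4) in Definition \ref{def-leftAB} for the pair $(\mathcal{X},\omega^{\wedge})$. The key technical input is Corollary \ref{cor-4.5}, which under a left Frobenius pair hypothesis identifies $\omega^{\wedge}=\mathcal{X}^{\perp}\cap\mathcal{X}^{\wedge}$, together with Theorem \ref{thm-4.9} describing the closure properties of $\mathcal{X}^{\wedge}$.

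First I would verify (AB1), which requires that $(\mathcal{X},\mathcal{X}\cap\omega^{\wedge})$ be a left Frobenius pair. By Proposition \ref{prop-injective-1}(2), the hypotheses of a left Frobenius pair already give $\omega=\mathcal{X}\cap\omega^{\wedge}=\mathcal{X}\cap\mathcal{X}^{\perp}$. Thus $\mathcal{X}\cap\omega^{\wedge}$ coincides with $\omega$, and (AB1) reduces to the given fact that $(\mathcal{X},\omega)$ is a left Frobenius pair.

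The main work is (AB2): showing that $\omega^{\wedge}$ is closed under direct summands, $\xi$-extensions, and cones of $\xi$-inflations. Here I would apply Corollary \ref{cor-4.5} to rewrite $\omega^{\wedge}=\mathcal{X}^{\perp}\cap\mathcal{X}^{\wedge}$ and then check the three closure properties for each factor separately. For $\mathcal{X}^{\wedge}$ all three closure properties are delivered by Theorem \ref{thm-4.9}. For $\mathcal{X}^{\perp}$ the verification is routine: given an $\E$-triangle $\xymatrix@C=0.4cm{B_1\ar[r]&B_2\ar[r]&B_3\ar@{-->}[r]&}$ in $\xi$, applying $\Hom_{\mathscr{C}}(X,-)$ for $X\in\mathcal{X}$ produces the long exact sequence of $\xi$xt-groups from Remark \ref{remark-long}(2); from this sequence one reads off that if any two of $B_1,B_2,B_3$ lie in $\mathcal{X}^{\perp}$ then so does the third, covering both $\xi$-extensions and cones of $\xi$-inflations. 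Direct summand closure of $\mathcal{X}^{\perp}$ follows from additivity of $\xi$xt. Intersecting the two classes preserves all three closure properties, giving (AB2). I expect this step to be the main obstacle, mainly in keeping track of which $\xi$xt-indices vanish.

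For (AB3), the inclusion $\omega^{\wedge}\subseteq\mathcal{X}^{\wedge}$ is immediate from $\omega\subseteq\mathcal{X}$ together with the definition of the $(-)^{\wedge}$ operation: any $\xi$-exact resolution by objects of $\omega$ is a fortiori a $\xi$-exact resolution by objects of $\mathcal{X}$. This completes the proof that $(\mathcal{X},\omega^{\wedge})$ is a left weak $AB$ context. Finally, for the ``moreover'' clause, the extra condition (AB4) that $\mathcal{X}^{\wedge}=\mathscr{C}$ is precisely the added hypothesis, so $(\mathcal{X},\omega^{\wedge})$ is a left $AB$ context with no further work.
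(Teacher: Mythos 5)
Your proposal is correct and follows essentially the same route as the paper: identify $\mathcal{X}\cap\omega^{\wedge}=\omega$ via Proposition \ref{prop-injective-1}, rewrite $\omega^{\wedge}=\mathcal{X}^{\perp}\cap\mathcal{X}^{\wedge}$ via Corollary \ref{cor-4.5}, and import the closure properties from Theorem \ref{thm-4.9}, with you in fact supplying the routine check for $\mathcal{X}^{\perp}$ that the paper leaves implicit. One small caution: the two-out-of-three claim for $\mathcal{X}^{\perp}$ fails in the cocone direction (the long exact sequence only yields vanishing from degree $2$ on there), but the two directions you actually invoke --- $\xi$-extensions and cones of $\xi$-inflations --- are exactly the ones that do follow.
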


\begin{proof}
By Proposition \ref{prop-injective-1},  $\mathcal{X}\cap\omega^{\wedge}=\omega$.
By Corollary \ref{cor-4.5},
 $\omega^{\wedge}=\mathcal{X}^{\perp}\cap \mathcal{X}^{\wedge}$. Moreover,
by Theorem \ref{thm-4.9},
$\omega^{\wedge}$ is closed under direct summands,
$\xi$-extensions and cones of $\xi$-inflations.
Clearly, $\omega^\wedge\subseteq\mathcal{X}^\wedge$. It follows that $(\mathcal{X},\omega^\wedge)$
is a left weak AB context in $\mathscr{C}$.
\end{proof}

The following  proposition provides a method  to obtain cotorsion pairs from left $AB$ contexts in $\mathscr{C}$.

\begin{proposition}\label{prop-2}
Let $(\mathcal{A},\mathcal{B})$ be a left weak $AB$ context in $\mathscr{C}$. Set
$\omega:=\mathcal{A}\cap \mathcal{B}$. Then
$$\omega=\mathcal{A}\cap\mathcal{A}^{\perp}\ \text{and}\ \omega^{\wedge}=\mathcal{B}.$$
Moreover, the following statements are equivalent.
\begin{itemize}
\item[(1)] $\mathcal{A}^{\wedge}=\mathscr{C}$.
\item[(2)] $(\mathcal{A},\mathcal{B})$ is a cotorsion pair in $\mathscr{C}$.
\end{itemize}
If one of the above conditions holds, then $\mathcal{A}$ is resolving.
\end{proposition}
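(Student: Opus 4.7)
My plan is to verify the two identities first, then run the equivalence (1)$\Leftrightarrow$(2) off of them, and finally deduce that $\mathcal{A}$ is resolving from the cotorsion-pair structure plus the Frobenius hypothesis. Throughout, the strategy is to recognize that the axioms of a left weak $AB$ context match exactly the hypotheses of Proposition \ref{theorem-2} applied to $(\mathcal{X},\mathcal{Y})=(\mathcal{A},\mathcal{B})$, so several identities come almost for free.

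For the first display, I would note that $\omega=\mathcal{A}\cap\mathcal{B}$ is, by (AB1), a $\xi$-cogenerator of $\mathcal{A}$ that is $\mathcal{A}$-injective and closed under direct summands. So Proposition \ref{prop-injective-1}(2) yields $\omega=\mathcal{A}\cap\omega^{\wedge}=\mathcal{A}\cap\mathcal{A}^{\perp}$. For the equality $\omega^{\wedge}=\mathcal{B}$, I would check: $\mathcal{A}$ and $\mathcal{B}$ are closed under direct summands (by (LF1) and (AB2)); $\mathcal{B}\subseteq\mathcal{A}^{\wedge}$ (by (AB3)); $\mathcal{A}$ is closed under $\xi$-extensions and cocones of $\xi$-deflations (by (LF2)); $\mathcal{B}$ is closed under $\xi$-extensions and cones of $\xi$-inflations (by (AB2)); and $\omega=\mathcal{A}\cap\mathcal{B}$ is $\mathcal{A}$-injective and a $\xi$-cogenerator of $\mathcal{A}$ (by (LF3)). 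These are exactly the hypotheses of Proposition \ref{theorem-2}, so $\mathcal{B}=\omega^{\wedge}$.

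For (1)$\Rightarrow$(2), assume $\mathcal{A}^{\wedge}=\mathscr{C}$. By Proposition \ref{prop-injective-1}(1), $\mathcal{A}\subseteq{}^{\perp}(\omega^{\wedge})={}^{\perp}\mathcal{B}$, giving $\xi\mathrm{xt}^{1}_{\xi}(\mathcal{A},\mathcal{B})=0$. Since every $M\in\mathscr{C}$ lies in $\mathcal{A}^{\wedge}$, Proposition \ref{prop-injective-2} provides both Auslander-Buchweitz $\mathbb{E}$-triangles
$$\xymatrix@C=0.5cm{Y_M\ar[r]&X_M\ar[r]&M\ar@{-->}[r]&}\quad\text{and}\quad \xymatrix@C=0.5cm{M\ar[r]&Y^M\ar[r]&X^M\ar@{-->}[r]&}$$
in $\xi$ with $X_M,X^M\in\mathcal{A}$ and $Y_M,Y^M\in\omega^{\wedge}=\mathcal{B}$. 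Together with the closure of $\mathcal{A}$ and $\mathcal{B}$ under direct summands, this is exactly Definition \ref{def-cotorsion} for both left and right cotorsion pairs, so $(\mathcal{A},\mathcal{B})$ is a cotorsion pair. For (2)$\Rightarrow$(1), given any $M\in\mathscr{C}$, property (L3) supplies an $\mathbb{E}$-triangle $\xymatrix@C=0.3cm{V\ar[r]&U\ar[r]&M\ar@{-->}[r]&}$ in $\xi$ with $U\in\mathcal{A}\subseteq\mathcal{A}^{\wedge}$ and $V\in\mathcal{B}=\omega^{\wedge}\subseteq\mathcal{A}^{\wedge}$; since $\mathcal{A}^{\wedge}$ is closed under cones of $\xi$-inflations by Theorem \ref{thm-4.9}, we conclude $M\in\mathcal{A}^{\wedge}$, hence $\mathscr{C}=\mathcal{A}^{\wedge}$.

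Finally, assuming either equivalent condition, to see that $\mathcal{A}$ is resolving I need $\mathcal{P}(\xi)\subseteq\mathcal{A}$, closure under $\xi$-extensions, and closure under cocones of $\xi$-deflations. The last two hold by (LF2), and $\mathcal{P}(\xi)\subseteq\mathcal{A}={}^{\perp_1}\mathcal{B}$ follows immediately from Remark \ref{remark-contra}(1) applied to the (left) cotorsion pair $(\mathcal{A},\mathcal{B})$. I do not anticipate any genuine obstacle here: essentially every step is the direct invocation of a result already proved in Section 4, and the only mild care needed is the verification, in the step $\omega^{\wedge}=\mathcal{B}$, that all six closure/cogenerator hypotheses of Proposition \ref{theorem-2} are matched up correctly with the axioms (LF1)--(LF3) and (AB1)--(AB3).
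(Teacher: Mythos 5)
Your proposal is correct and follows essentially the same route as the paper: the identity $\omega=\mathcal{A}\cap\mathcal{A}^{\perp}$ via Proposition \ref{prop-injective-1}, the identity $\omega^{\wedge}=\mathcal{B}$ via the closure properties of $\mathcal{B}$ and the Auslander--Buchweitz triangles (the paper re-runs the containment argument while you simply cite Proposition \ref{theorem-2}, whose hypotheses are indeed exactly the left weak $AB$ context axioms), the implication $(1)\Rightarrow(2)$ via Propositions \ref{prop-injective-1} and \ref{prop-injective-2}, and the resolving claim via Remark \ref{remark-contra}. Your explicit argument for $(2)\Rightarrow(1)$ using closure of $\mathcal{A}^{\wedge}$ under cones of $\xi$-inflations (Theorem \ref{thm-4.9}) correctly fills in a step the paper dismisses as clear.
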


\begin{proof}
By assumption,  $(\mathcal{A},\omega)$ is a left Frobenius pair in $\mathscr{C}$.
Then
$\omega=\mathcal{A}\cap\omega^{\wedge}$ and
$\omega^{\wedge}=\mathcal{A}^{\perp}\cap{\mathcal{A}{^\wedge}}$ by Proposition \ref{prop-injective-1} and Corollary \ref{cor-4.5}. Thus
$\omega=\mathcal{A}\cap\mathcal{A}^{\perp}\cap{\mathcal{A}{^\wedge}}
=\mathcal{A}\cap\mathcal{A}^{\perp}.$

For the second equality. Firstly, $\omega^{\wedge}\subseteq \mathcal{B}$.
Conversely, let $X\in \mathcal{B}\subseteq\mathcal{A}^{\wedge}$.
By Proposition \ref{prop-injective-2}, there is a $\mathbb{E}$-triangle
$\xymatrix@C=0.5cm{K\ar[r]&A\ar[r]&X\ar@{-->}[r]&}$
in $\xi$ with  $A\in \mathcal{A}$ and $K\in \omega^{\wedge}\subseteq\mathcal{B}$.
Then $A\in \mathcal{B}$.
Thus $A\in \mathcal{A}\cap\mathcal{B}=\omega$, and hence
$X\in \omega^{\wedge}$, which implies $\mathcal{B}\subseteq\omega^{\wedge}$.
Therefore, $\mathcal{B}=\omega^{\wedge}$.

Next we prove (1) $\Longleftrightarrow$ (2).

(1) $ \Longrightarrow$ (2) By Proposition \ref{prop-injective-1}, we have
$\mathcal{A}\subseteq {^{\perp}(\omega^{\wedge})}$ and $\xi xt_{\xi}^{1}(\mathcal{A},\mathcal{B})=0$.
Assume that $\mathscr{C}=\mathcal{A}^{\wedge}$. Then for any $T\in \mathscr{C}$, there are $\mathbb{E}$-triangles
$\xymatrix@C=0.5cm{B\ar[r]&A\ar[r]&T\ar@{-->}[r]&}$
and
$\xymatrix@C=0.5cm{T\ar[r]&B'\ar[r]&A'\ar@{-->}[r]&}$
in $\xi$ with $A,A'\in \mathcal{A}$ and $B,B'\in \omega^{\wedge}=\mathcal{B}$
by Proposition \ref{prop-injective-2}. Thus
$(\mathcal{A},\mathcal{B}=\omega^{\wedge})$ is a cotorsion pair in $\mathscr{C}$.

(2) $\Longrightarrow$ (1) Clearly.
\end{proof}

The following proposition provides a method to
obtain left Frobenius pairs and left (weak) $AB$ contexts from cotorsion pairs in $\mathscr{C}$.

\begin{proposition}\label{prop-ctowab}
Let
$(\mathcal{U},\mathcal{V})$ be a cotorsion pair in $\mathscr{C}$ with $\mathcal{U}$ a resolving subcategory. Set $\omega:=\mathcal{U}\cap \mathcal{V}$.
Then $(\mathcal{U},\omega)$ is a left Frobenius pair in $\mathscr{C}$.

Moreover, we have the following assertions.
\begin{itemize}
\item[(1)] If $\mathcal{V}\subseteq\mathcal{U}^{\wedge}$, then
$(\mathcal{U},\mathcal{V})$ is a left weak $AB$ context in $\mathscr{C}$.
\item[(2)] If $\mathcal{U}^{\wedge}=\mathscr{C}$, then  $(\mathcal{U},\mathcal{V})$
is a left $AB$ context in $\mathscr{C}$.
\end{itemize}
\end{proposition}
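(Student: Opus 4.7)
The plan is to verify the Frobenius pair axioms for $(\mathcal{U},\omega)$ directly from the cotorsion pair data and the resolving hypothesis, and then to deduce the two ``moreover'' assertions by checking the remaining conditions in Definition \ref{def-leftAB}.

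First I would establish that $(\mathcal{U},\omega)$ is a left Frobenius pair. Condition (LF1) follows since $\mathcal{U}$ is closed under direct summands by (L1), and $\omega=\mathcal{U}\cap\mathcal{V}$ is the intersection of two classes closed under direct summands (using (L1) and (R1)). Condition (LF2) is immediate from the assumption that $\mathcal{U}$ is resolving. For (LF3) I split into two sub-claims. First, $\omega$ is $\mathcal{U}$-injective: since $\mathcal{U}$ is resolving, Corollary \ref{lemma-resolving} upgrades the cotorsion-pair vanishing $\xi\mathrm{xt}^{1}_{\xi}(\mathcal{U},\mathcal{V})=0$ to $\xi\mathrm{xt}^{i\geq 1}_{\xi}(\mathcal{U},\mathcal{V})=0$, so in particular $\omega\subseteq\mathcal{V}\subseteq\mathcal{U}^{\perp}$. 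Second, $\omega$ is a $\xi$-cogenerator of $\mathcal{U}$: applying (R3) to any $U\in\mathcal{U}$ yields an $\mathbb{E}$-triangle $\xymatrix@C=0.5cm{U\ar[r]&V'\ar[r]&U'\ar@{-->}[r]&}$ in $\xi$ with $V'\in\mathcal{V}$ and $U'\in\mathcal{U}$; since both $U,U'\in\mathcal{U}$ and $\mathcal{U}$ is closed under $\xi$-extensions, $V'\in\mathcal{U}\cap\mathcal{V}=\omega$, giving the required cogenerating triangle.

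For part (1), I would check that $(\mathcal{U},\mathcal{V})$ meets (AB1)--(AB3) of Definition \ref{def-leftAB}. (AB1) is exactly the Frobenius pair just established. For (AB2), $\mathcal{V}$ is closed under direct summands by (R1), and since $\mathcal{U}$ is resolving, Corollary \ref{lemma-resolving} says $\mathcal{V}$ is coresolving, hence closed under $\xi$-extensions and cones of $\xi$-inflations. (AB3) is the hypothesis $\mathcal{V}\subseteq\mathcal{U}^{\wedge}$.

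For part (2), the hypothesis $\mathcal{U}^{\wedge}=\mathscr{C}$ immediately gives (AB4), and it also forces (AB3) since $\mathcal{V}\subseteq\mathscr{C}=\mathcal{U}^{\wedge}$, so we may invoke part (1) to conclude. I expect no genuine obstacle here: the argument is essentially bookkeeping that repackages the cotorsion-pair axioms together with the resolving assumption into the Frobenius/AB framework. The only mildly delicate point is recognizing that the coresolving character of $\mathcal{V}$ (needed in (AB2)) is not literally given but has to be extracted via Corollary \ref{lemma-resolving} from the resolving hypothesis on $\mathcal{U}$.
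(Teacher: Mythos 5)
Your proposal is correct and follows essentially the same route as the paper: both verify (LF1)--(LF3) directly from the cotorsion pair axioms, both obtain $\mathcal{U}$-injectivity of $\omega$ by invoking Corollary \ref{lemma-resolving} to upgrade $\xi xt^{1}_{\xi}(\mathcal{U},\mathcal{V})=0$ to $\xi xt^{i\geq 1}_{\xi}(\mathcal{U},\mathcal{V})=0$, both produce the cogenerating $\mathbb{E}$-triangle from (R3) plus closure of $\mathcal{U}$ under $\xi$-extensions, and both derive (AB2) from the coresolving property of $\mathcal{V}$ via the same corollary. No gaps.
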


\begin{proof}
First of all,
$\mathcal{U}$ is closed under $\xi$-extensions and cocones of $\xi$-deflations, and $\omega:=\mathcal{U}\cap\mathcal{V}$ is closed under direct summands.
Moreover, by Corollary \ref{lemma-resolving}, $\mathcal{V}\subseteq\mathcal{U}^\perp$ and
$\omega\subseteq\mathcal{U}\cap \mathcal{U}^\perp$, that is, $\omega$ is $\mathcal{U}$-injective.
Now, let $U\in \mathcal{U}$. Then there is an $\mathbb{E}$-triangle
$\xymatrix@C=0.5cm{U\ar[r]&V'\ar[r]&U'\ar@{-->}[r]&}$
in $\xi$ with $U'\in \mathcal{U}$ and $V'\in \mathcal{V}$.
 Since
$\mathcal{U}$ is closed under $\xi$-extensions, then $V'\in \mathcal{U}\cap\mathcal{V}=\omega$, which shows that $\omega$ is a $\xi$-cogenerator in $\mathcal{U}$.
Therefore,  $(\mathcal{U},\omega)$ is a left Frobenius pair in $\mathscr{C}$.

(1) By Corollary \ref{lemma-resolving}, $\mathcal{V}$ is closed under $\xi$-extensions and cones
of $\xi$-inflations. Since $\mathcal{V}\subseteq\mathcal{U}^{\wedge}$ by assumption,
$(\mathcal{U},\mathcal{V})$ is a left weak $AB$ context in $\mathscr{C}$.

(2) It is clear by (1).
\end{proof}

Now we give the main result of this paper.

\begin{theorem}\label{main}
For an integer $n\geq 1$, consider the following classes:
\begin{align*}
\mathfrak{A}:=&\{\text{A pair } (\mathcal{X},\omega) \text{ in } \mathscr{C}\mid
(\mathcal{X},\omega)\text{ is a left } \text{Frobenius pair in } \mathscr{C}\},\\
\mathfrak{B}:=&\{\text{A pair } (\mathcal{A},\mathcal{B}) \text{ in } \mathscr{C}\mid
(\mathcal{A},\mathcal{B})\text{ is a left } \text{weak AB context}\},\\
\mathfrak{C}:=&\{\text{A pair } (\mathcal{U},\mathcal{V}) \text{ in } \mathscr{C}\mid
(\mathcal{U},\mathcal{V}) \text{ is a } \text{cotorsion pair in }
\mathscr{C}\text{ with }\mathcal{U}\text{ resolving and }\mathcal{V}\subseteq\mathcal{U}^{\wedge}\},\\
\mathfrak{D}:=&\{\text{A pair } (\mathcal{U},\mathcal{V}) \text{ in } \mathscr{C}\mid
(\mathcal{U},\mathcal{V}) \text{ is an }n \text{-cotorsion pair in }
\mathscr{C}\text{ with }\mathcal{U}\text{ resolving and }\mathcal{V}\subseteq\mathcal{U}^{\wedge}\}.
\end{align*}
Then we have
\begin{itemize}
\item[(1)] There is a one-to-one correspondence between $\mathfrak{A}$ and $\mathfrak{B}$
given by
\begin{align*}
\Phi:&\mathfrak{A}\longrightarrow\mathfrak{B},\ \ (\mathcal{X},\omega)
\mapsto (\mathcal{X},\ {\omega}^{\wedge}),\\
\Psi:&\mathfrak{B}\longrightarrow\mathfrak{A},\ \ (\mathcal{A},\mathcal{B})
\mapsto (\mathcal{A},\ \mathcal{A}\cap\mathcal{B}).
\end{align*}
\item[(2)] $\mathfrak{C}\subseteq\mathfrak{B}$.
\item[(3)] $\mathfrak{C}=\mathfrak{D}$.
\end{itemize}
\end{theorem}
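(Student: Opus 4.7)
The strategy is to assemble the three statements from propositions already established in Sections 3 and 4; essentially no new construction is needed, only careful bookkeeping of which hypotheses of each cited result are supplied by the axioms of a left Frobenius pair, a left weak AB context, or a cotorsion pair with $\mathcal{U}$ resolving.

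For (1), I would first verify that $\Phi$ and $\Psi$ are well defined. That $\Phi(\mathcal{X},\omega)=(\mathcal{X},\omega^{\wedge})$ lies in $\mathfrak{B}$ is exactly Proposition \ref{prop-ftoc}. That $\Psi(\mathcal{A},\mathcal{B})=(\mathcal{A},\mathcal{A}\cap\mathcal{B})$ lies in $\mathfrak{A}$ is literally axiom (AB1) of Definition \ref{def-leftAB}. To see $\Psi\circ\Phi=\mathrm{id}_{\mathfrak{A}}$, starting from $(\mathcal{X},\omega)$ one must check $\mathcal{X}\cap\omega^{\wedge}=\omega$; this is Proposition \ref{prop-injective-1}(2), whose hypotheses ($\omega$ is $\mathcal{X}$-injective, a $\xi$-cogenerator of $\mathcal{X}$, and closed under direct summands in $\mathscr{C}$) are supplied by (LF1) and (LF3). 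For $\Phi\circ\Psi=\mathrm{id}_{\mathfrak{B}}$, starting from $(\mathcal{A},\mathcal{B})$ and writing $\omega=\mathcal{A}\cap\mathcal{B}$ one must check $\omega^{\wedge}=\mathcal{B}$; this is the second equality displayed in Proposition \ref{prop-2}, which uses axioms (AB2) and (AB3) crucially.

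For (2), Proposition \ref{prop-ctowab}(1) tells us directly that any $(\mathcal{U},\mathcal{V})\in\mathfrak{C}$ (in particular, having $\mathcal{V}\subseteq\mathcal{U}^{\wedge}$) is a left weak AB context, so $\mathfrak{C}\subseteq\mathfrak{B}$. For (3), I would invoke the equivalence (2)$\Longleftrightarrow$(4) of Proposition \ref{prop-6}: under the shared hypothesis that $\mathcal{U}$ is resolving, the notions of $n$-cotorsion pair and cotorsion pair coincide. The extra side condition $\mathcal{V}\subseteq\mathcal{U}^{\wedge}$ is identical on both sides, so this equivalence upgrades immediately to $\mathfrak{C}=\mathfrak{D}$.

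The main obstacle is not in the reasoning itself but in matching each hypothesis of the cited propositions to the right axiom of the source class. The most delicate verification is $\Phi\circ\Psi=\mathrm{id}_{\mathfrak{B}}$: showing that $(\mathcal{A}\cap\mathcal{B})^{\wedge}$ recovers the full $\mathcal{B}$, rather than only a proper subcategory, relies on $\mathcal{B}$ being closed under cones of $\xi$-inflations together with $\mathcal{B}\subseteq\mathcal{A}^{\wedge}$, so that the Auslander--Buchweitz approximation $\mathbb{E}$-triangle of Proposition \ref{prop-injective-2} can be applied to an arbitrary $X\in\mathcal{B}$ and the middle term pulled back into $\mathcal{A}\cap\mathcal{B}=\omega$; this is precisely the argument internal to Proposition \ref{prop-2}, which is why that result is pivotal in closing the bijection.
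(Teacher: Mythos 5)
Your proposal is correct and follows essentially the same route as the paper's own proof: well-definedness of $\Phi$ via Proposition \ref{prop-ftoc}, the identity $\Psi\Phi=1_{\mathfrak{A}}$ via $\mathcal{X}\cap\omega^{\wedge}=\omega$ from Proposition \ref{prop-injective-1}, the identity $\Phi\Psi=1_{\mathfrak{B}}$ via $(\mathcal{A}\cap\mathcal{B})^{\wedge}=\mathcal{B}$ from Proposition \ref{prop-2}, part (2) via Proposition \ref{prop-ctowab}, and part (3) via the equivalence of cotorsion and $n$-cotorsion pairs under the resolving hypothesis in Proposition \ref{prop-6}. Your additional remark that well-definedness of $\Psi$ is immediate from axiom (AB1) is a point the paper leaves implicit, but it is correct and does not change the argument.
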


\begin{proof}
(1) First of all, $\Phi$ is well-defined by Proposition \ref{prop-ftoc}.
Next we prove
$\Phi\Psi={\rm Id}_{\mathfrak{B}}$ and $\Psi\Phi={\rm Id}_{\mathfrak{A}}$.
Let $(\mathcal{A},\mathcal{B})\in \mathfrak{B}$. Then
$\Phi\Psi(\mathcal{A},\mathcal{B})=\Phi(\mathcal{A},\mathcal{A}\cap\mathcal{B})
=(\mathcal{A},(\mathcal{A}\cap\mathcal{B})^{\wedge}).$
By Proposition \ref{prop-2},  $\mathcal{B}=(\mathcal{A}\cap\mathcal{B})^{\wedge}$.
It follows that $\Phi\Psi(\mathcal{A},\mathcal{B})=(\mathcal{A},\mathcal{B})$
and $\Phi\Psi={\rm Id}_{\mathfrak{B}}$. Conversely,
let $(\mathcal{X},\omega)\in \mathfrak{A}$. Then
$\Psi\Phi(\mathcal{X},\omega)=\Psi(\mathcal{X},\omega^{\wedge})
=(\mathcal{X},\mathcal{X}\cap\omega^{\wedge}).$
Since $\mathcal{X}\cap\omega^{\wedge}=\omega$ by Proposition \ref{prop-injective-1},
we have $\Psi\Phi(\mathcal{X},\omega)=(\mathcal{X},\omega)$ and $\Psi\Phi={\rm Id}_{\mathfrak{A}}$.

(2) It follows from Proposition \ref{prop-ctowab}.

(3) It follows from Proposition \ref{prop-6}.
\end{proof}

Furthermore, we have the following theorem.

\begin{theorem}\label{cor}
For an integer $n\geq 1$, consider the following classes:
\begin{align*}
\mathfrak{A}':=&\{\text{A pair } (\mathcal{X},\omega) \text{ in } \mathscr{C}:
(\mathcal{X},\omega)\text{ is a left } \text{Frobenius pair with }\mathcal{X}^{\wedge}=\mathscr{C}\},\\
\mathfrak{B}':=&\{\text{A pair } (\mathcal{A},\mathcal{B}) \text{ in } \mathscr{C}:
(\mathcal{A},\mathcal{B})\text{ is a left } \text{AB context}\},\\
\mathfrak{C}':=&\{\text{A pair } (\mathcal{U},\mathcal{V}) \text{ in } \mathscr{C}:
(\mathcal{U},\mathcal{V}) \text{ is a } \text{cotorsion pair in } \mathscr{C}\text{ with }
\mathcal{U}\text{ resolving and }\mathcal{U}^{\wedge}=\mathscr{C}\},\\
\mathfrak{D}':=&\{\text{A pair } (\mathcal{U},\mathcal{V}) \text{ in } \mathscr{C}:
(\mathcal{U},\mathcal{V}) \text{ is an }n \text{-cotorsion pair in }
\mathscr{C}\text{ with }\mathcal{U}\text{ resolving and }\mathcal{U}^{\wedge}=\mathscr{C}\}.
\end{align*}
Then $\mathfrak{B}'=\mathfrak{C}'=\mathfrak{D}'$ and there is a one-to-one correspondence between
$\mathfrak{A}'$ and $\mathfrak{B}'$.
\end{theorem}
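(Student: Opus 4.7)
\textbf{Proof proposal for Theorem \ref{cor}.}
My plan is to assemble this theorem directly from the preceding propositions rather than prove anything from scratch, since essentially every ingredient has been established. I would organize the argument as three independent assertions: (i) the bijection $\mathfrak{A}' \leftrightarrow \mathfrak{B}'$ obtained by restricting the bijection of Theorem \ref{main}, (ii) the equality $\mathfrak{B}' = \mathfrak{C}'$, and (iii) the equality $\mathfrak{C}' = \mathfrak{D}'$.

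For (i), my strategy is to verify that the maps $\Phi$ and $\Psi$ of Theorem \ref{main} restrict to well-defined maps between $\mathfrak{A}'$ and $\mathfrak{B}'$. Given $(\mathcal{X},\omega)\in\mathfrak{A}'$, i.e.\ a left Frobenius pair with $\mathcal{X}^\wedge=\mathscr{C}$, the second assertion of Proposition \ref{prop-ftoc} shows that $(\mathcal{X},\omega^\wedge)$ is a left AB context, so $\Phi(\mathcal{X},\omega)\in\mathfrak{B}'$. Conversely, if $(\mathcal{A},\mathcal{B})\in\mathfrak{B}'$, then (AB4) gives $\mathcal{A}^\wedge=\mathscr{C}$, and $\Psi(\mathcal{A},\mathcal{B})=(\mathcal{A},\mathcal{A}\cap\mathcal{B})$ lies in $\mathfrak{A}$ by Theorem \ref{main}, hence in $\mathfrak{A}'$. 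The identities $\Phi\Psi=1_{\mathfrak{B}'}$ and $\Psi\Phi=1_{\mathfrak{A}'}$ are then inherited directly from Theorem \ref{main}(1).

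For (ii), the inclusion $\mathfrak{B}'\subseteq\mathfrak{C}'$ is the content of Proposition \ref{prop-2}: the implication (1)$\Rightarrow$(2) there, applied to a left AB context (which automatically satisfies $\mathcal{A}^\wedge=\mathscr{C}$), yields the cotorsion pair structure, and the ``Moreover'' clause gives that $\mathcal{A}$ is resolving. For the reverse inclusion $\mathfrak{C}'\subseteq\mathfrak{B}'$, note that any $(\mathcal{U},\mathcal{V})\in\mathfrak{C}'$ satisfies $\mathcal{V}\subseteq\mathscr{C}=\mathcal{U}^\wedge$ trivially, so Proposition \ref{prop-ctowab}(2) applies and produces a left AB context, placing $(\mathcal{U},\mathcal{V})$ in $\mathfrak{B}'$.

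For (iii), the equality $\mathfrak{C}'=\mathfrak{D}'$ is an immediate consequence of the equivalence (2)$\Leftrightarrow$(4) in Proposition \ref{prop-6}: an $n$-cotorsion pair with $\mathcal{U}$ resolving is the same as a cotorsion pair with $\mathcal{U}$ resolving, and the auxiliary condition $\mathcal{U}^\wedge=\mathscr{C}$ is common to both classes. Since each step is an invocation of an already-proved statement, I do not expect any genuine obstacle; the only point requiring a small verification is that the restrictions $\Phi|_{\mathfrak{A}'}$ and $\Psi|_{\mathfrak{B}'}$ indeed land in the claimed subclasses, and this is handled by the second parts of Propositions \ref{prop-ftoc} and \ref{prop-2}.
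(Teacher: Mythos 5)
Your proposal is correct and follows essentially the same route as the paper: the paper likewise reduces everything to Theorem \ref{main} (for the bijection and for $\mathfrak{C}'\subseteq\mathfrak{B}'$ and $\mathfrak{C}'=\mathfrak{D}'$) and then invokes Proposition \ref{prop-2} for the remaining inclusion $\mathfrak{B}'\subseteq\mathfrak{C}'$. Your write-up is just a more explicit unpacking of the same assembly of already-proved statements.
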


\begin{proof}
By Theorem \ref{main}, it suffices to show $\mathfrak{B}\subseteq \mathfrak{C}$.
Now the assertion follows from Proposition \ref{prop-2}.
\end{proof}

Following Example \ref{exa-4.7}(1) and Theorem  \ref{main}, we have that
$(\mathcal{GP}(\xi),\mathcal{P}(\xi)^{\wedge})$ is a left weak $AB$ context in $\mathscr{C}$.
In addition, if $\sup\{\xi\text{-}\mathcal{G}\pd T\mid T\in \mathscr{C}\}<\infty$,
then $(\mathcal{GP}(\xi),\mathcal{P}(\xi)^{\wedge})$ is a left $AB$ context by Theorem \ref{cor}.

Now let $\mathscr{C}=\mathcal{T}$ be a triangulated category, $[1]$  the shift functor, and $\mathbb{E}={\rm Hom}_{\mathcal{T}}(-,-[1])$. Then we recover Ma-Zhao-Huang's result as follows.

\begin{corollary}{\rm (\cite[Theorem 4.23]{MZ})}
Let $\mathscr{C}$ be a triangulated category with a proper class of triangles. For an integer $n\geq 1$, consider the following classes:
\begin{align*}
\mathfrak{A}':=&\{\text{A pair } (\mathcal{X},\omega) \text{ in }\mathcal{T}:
(\mathcal{X},\omega)\text{ is a left } \text{Frobenius pair with }\mathcal{X}^{\wedge}=\mathcal{T}\},\\
\mathfrak{B}':=&\{\text{A pair } (\mathcal{A},\mathcal{B}) \text{ in } \mathcal{T}:
(\mathcal{A},\mathcal{B})\text{ is a left } \text{AB context}\},\\
\mathfrak{C}':=&\{\text{A pair } (\mathcal{U},\mathcal{V}) \text{ in } \mathcal{T}:
(\mathcal{U},\mathcal{V}) \text{ is a } \text{cotorsion pair in } \mathcal{T}\text{ with }
\mathcal{U}\text{ resolving and }\mathcal{U}^{\wedge}=\mathcal{T}\},\\
\mathfrak{D}':=&\{\text{A pair } (\mathcal{U},\mathcal{V}) \text{ in } \mathcal{T}:
(\mathcal{U},\mathcal{V}) \text{ is an }n \text{-cotorsion pair in }
\mathscr{C}\text{ with }\mathcal{U}\text{ resolving and }\mathcal{U}^{\wedge}=\mathcal{T}\}.
\end{align*}
Then $\mathfrak{B}'=\mathfrak{C}'=\mathfrak{D}'$ and there is a one-to-one correspondence between
$\mathfrak{A}'$ and $\mathfrak{B}'$.
\end{corollary}

Finally, let $\mathscr{C}=\mathcal{A}$ be an abelian category, and $\mathbb{E}={\rm Ext}^1_{\mathcal{A}}(-,-)$. Then we recover partially
Becerril-Mendoza-P\'{e}rez-Santiago's result as follows.

\begin{corollary} {\rm (cf. \cite[Theorem 5.4]{BMP19F})}
Let $\mathcal{A}$ be an abelian category.  For an integer $n\geq 1$, consider the following classes:
\begin{align*}
\mathfrak{A}':=&\{\text{A pair } (\mathcal{X},\omega) \text{ in } \mathcal{A}:
(\mathcal{X},\omega)\text{ is a left } \text{Frobenius pair with }\mathcal{X}^{\wedge}=\mathcal{A}\},\\
\mathfrak{B}':=&\{\text{A pair } (\mathcal{A},\mathcal{B}) \text{ in } \mathcal{A}:
(\mathcal{A},\mathcal{B})\text{ is a left } \text{AB context}\},\\
\mathfrak{C}':=&\{\text{A pair } (\mathcal{U},\mathcal{V}) \text{ in } \mathcal{A}:
(\mathcal{U},\mathcal{V}) \text{ is a } \text{cotorsion pair in } \mathcal{A}\text{ with }
\mathcal{U}\text{ resolving and }\mathcal{U}^{\wedge}=\mathcal{A}\},\\
\mathfrak{D}':=&\{\text{A pair } (\mathcal{U},\mathcal{V}) \text{ in } \mathcal{A}:
(\mathcal{U},\mathcal{V}) \text{ is an }n \text{-cotorsion pair in }
\mathscr{C}\text{ with }\mathcal{U}\text{ resolving and }\mathcal{U}^{\wedge}=\mathcal{A}\}.
\end{align*}
Then $\mathfrak{B}'=\mathfrak{C}'=\mathfrak{D}'$ and there is a one-to-one correspondence between
$\mathfrak{A}'$ and $\mathfrak{B}'$.
\end{corollary}

\section*{Acknowledgment}
This work was funded by  the project ZR2021QA001 supported by Shandong Provincial Natural Science Foundation.


\begin{thebibliography}{99}
\setlength{\itemsep}{-2pt}
\bibitem{AT} Adachi, T., Tsukamoto, M. (2022). Hereditary cotorsion pairs and silting subcategories
in extriangulated categories. {\it J. Algebra} {594}: 109--137. DOI: 10.1016/j.jalgebra.2021.11.029
\bibitem{AM} Angeleri-H\"{u}gel, L., Mendoza, O. (2009). {Homological dimensions in cotorsion pairs}.
{\it Illinois J. Math.} {53}: 251--263. DOI: 10.1215/ijm/1264170849
\bibitem{AB89T} Auslander, M., Buchweitz, R. O. (1989). {The homological theory of maximal Cohen-Macaulay approximations}.
{\it Mem. Soc. Math. France}. {38}: 5--37. DOI: 10.24033/msmf.339
\bibitem{BMP19F} Becerril, V., Mendoza, O., P\'{e}rez, M. A., Santiago, V. (2019). {Frobenius pairs in abelian categories}.
{\it J. Homotopy Relat. Struct.} {14}: 1--50. DOI: 10.1007/s40062-018-0208-4

\bibitem{B"u} B\"{u}hler, T. (2010). {Exact categories}. {\it Expo. Math.} {28}: 1-69. DOI: 10.1016/j.exmath.2009.04.004
\bibitem{CT} Crivei, S., Torrecillas, B. (2008). {On some monic covers and epic envelopes}. {\it Arab. J. Sci. Eng.}  33: 123--135.
\bibitem{ET01H} Eklof, P., Trlifaj, J. (2001). {How to make $\Ext$ vanish}. {\it Bull. London Math. Soc.} {33}: 41--51. DOI: 10.1112/blms/33.1.41
\bibitem{EJ01R} Enochs, E., Jenda, O. M. G. (2000). {\it Relative Homological Algebra}. Expositions in Math. {30}. Berlin, Germany: De Gruyter.
\bibitem{GT06A} G\"{o}bel, R., Trlifaj, J. (2006). {\it Approximations and Endomorphism Algebras of Modules}.
Expositions in Math. {41}, Berlin, Germany: De Gruyter.
\bibitem{H00A} Hashimoto, M. (2000). {\it Auslander-Buchweitz Approximations of Equivariant Modules}.
London Math. Soc. Lecture Note Ser.  {282}. Cambridge, UK: Cambridge University Press.
\bibitem{ZPY} He, J., Zhou, P. (2022). {On the relation between $n$-cotorsion pairs and $(n+1)$-cluster tilting subcategories}.
{\it J. Algebra   Appl.} {21}(1): 2250011 (12 pp). DOI: 10.1142/S0219498822500116
\bibitem{HZZ} Hu, J., Zhang, D., Zhou, P. (2020). {Proper classes and Gorensteinness in extriangulated categories}.
{\it J. Algebra} {551}: 23--60. DOI: 10.1016/j.jalgebra.2019.12.028
\bibitem{HZZ1} Hu, J., Zhang, D., Zhou, P. (2021). {Gorenstein homological dimensions for extriangulated categories}. {\it Bull. Malays. Math. Sci. Soc.} { 44}: 2235--2252. DOI: 10.1007/s40840-020-01057-9
\bibitem{HZZZ} Hu, J., Zhang, D., Zhao, T., Zhou, P. (2021).  {Complete cohomology for extriangulated categories}. {\it Algebra Colloq.}  {28}(4): 701--720.
DOI: 10.1142/S1005386721000547

\bibitem{HMP} Huerta, M., Mendoza, O., P\'{e}rez, M. A. (2021). {$n$-cotorsion pairs}. {\it J. Pure Appl. Algebra}.
{225}(5): 106556 (34 pp). DOI: 10.1016/j.jpaa.2020.106556
\bibitem{LN} Liu, Y., Nakaoka, H. (2019).  {Hearts of twin cotorsion pairs on extriangulated categories}. {\it J. Algebra}. {528}: 96--149. DOI: 10.1016/j.jalgebra.2019.03.005
\bibitem{MZResolving} Ma, X., Zhao, T. (2021). {Resolving resolution dimensions in triangulated categories}. {\it  Open Math.} {19}(1): 121--143. DOI: 10.1515/math-2021-0013
\bibitem{MZ} Ma, X., Zhao, T., Huang, Z. (2021). {Left Frobenius pairs, cotorsion pairs and weak Auslander-Buchweitz contexts in triangulated categories}. {\it Algebra Colloq.} In press.
\bibitem{MDZ} Ma, Y., Ding, N., Zhang, Y. (2020). {Auslander-Buchweitz approximation theory for extriangulated categories}.  arXiv: 2006.05112v2. Available at: https://arxiv.org/abs/2006.05112
\bibitem{NOS} Nakaoka, H., Ogawa, Y., Sakai, A. (2021). {Localization of extriangulated categories}. arXiv: 2103.16907v2. Available at: https://arxiv.org/abs/2103.16907
\bibitem{NP} Nakaoka, H., Palu, Y. (2019). {Extriangulated categories, Hovey twin cotorsion pairs and model structures}. {Cah. Topol. G\'{e}om. Diff\'{e}r. Cat\'{e}g.} {60}(2): 117--193.
\bibitem{S} Salce, L. (1979). {\it Cotorsion Theories for Abelian Groups}. Symposia Mathematica, Vol. XXIII. London, UK: Academic Press, pp. 11--32.
  \bibitem{V} Vaso, L. (2021). {Gluing of $n$-cluster tilting subcategories for representation-directed algebras}. {\it Algebr. Represent. Theor.} {24}: 715--781. DOI: 10.1007/s10468-020-09967-9

\bibitem{ZZ} Zhou, P., Zhu, B. (2018). {Triangulated quotient categories revisited}. {\it J. Algebra}. {502}: 196--232. DOI: 10.1016/j.jalgebra.2018.01.031
\bibitem{ZZh} Zhu, B., Zhuang, X. (2021). {Grothendieck groups in extriangulated categories}. {\it J. Algebra}.
{574}:  206--232. DOI: 10.1016/j.jalgebra.2021.01.029

\end{thebibliography}
\end{document}